\newtheorem{theorem}{Theorem}[section]
\newtheorem{lemma}[theorem]{Lemma}
\theoremstyle{definition}
\newtheorem{definition}[theorem]{Definition}
\newtheorem{assumption}[theorem]{Assumption}
\newtheorem{prop}[theorem]{Proposition}
\newtheorem{cor}[theorem]{Corollary}
\newtheorem{remark}[theorem]{Remark}
\newcommand{\Cov}{\mathrm{Cov}}
\title{Central limit theorems for linear spectral statistics of  inhomogeneous random graphs with graphon limits}
\author{Xiangyi Zhu}
\address{Department of Applied Mathematics and Statistics, Johns Hopkins University,
Baltimore, MD 21218}
\email{xzhu96@jh.edu}
\author{Yizhe Zhu}
\address{Department of Mathematics, University of Southern California, Los Angeles, CA 90007}
\email{yizhezhu@usc.edu}
\begin{document}

\maketitle

\begin{abstract}
We establish central limit theorems (CLTs) for the linear spectral statistics of the adjacency matrix of inhomogeneous random graphs across all sparsity regimes, providing explicit covariance formulas under the assumption that the variance profile of the random graphs converges to a graphon limit. Two types of CLTs are derived for the (non-centered) adjacency matrix and the centered adjacency matrix, with different scaling factors when the sparsity parameter $p$ satisfies $np = n^{\Omega(1)}$, and with the same scaling factor when $np = n^{o(1)}$. In both cases,  the limiting covariance is expressed in terms of homomorphism densities from certain types of finite graphs to a graphon. These results highlight a phase transition in the centering effect for global eigenvalue fluctuations. For the non-centered adjacency matrix, we also identify new phase transitions for the CLTs in the sparse regime when $n^{1/m} \ll np \ll n^{1/(m-1)}$ for $m \geq 2$. Furthermore, weaker conditions for the graphon convergence of the variance profile are sufficient as $p$ decreases from being constant to $np \to c\in (0,\infty)$. These findings reveal a novel connection between graphon limits and linear spectral statistics in random matrix theory.
\end{abstract}

\section{Introduction}

\subsection{Inhomogeneous random matrices and graph limits}
The spectrum of inhomogeneous random matrices is an important topic in random matrix theory. In recent years, much work has been done in terms of its global spectral distribution \cite{nica2002operator,zhu2020graphon}, local law and universality \cite{ajanki2019quadratic,ajanki2017universality, dumitriu2019sparse},  extreme eigenvalues  \cite{benaych2020spectral,dumitriu2024extreme,benaych2019largest,hiesmayr2023spectral}, and mesoscopic spectral CLTs \cite{landon2024single,kruger2023mesoscopic}. A special class of inhomogeneous random matrices is the adjacency matrix of sparse inhomogeneous random graphs, which is of particular interest in spectral random graph theory, with applications to network estimation \cite{abbe2023learning,stephan2022non,gao2021minimax,banerjee2017optimal}, community detection \cite{abbe2018community,le2018concentration,stephan2024sparse}, matrix completion \cite{bordenave2023detection,stephan2024non}, and graph neural networks \cite{ruiz2021graphon,ruiz2023transferability,keriven2020convergence}.

There is a recent line of work aiming to connect graph limit theory with random matrix theory. Various types of graph limits have proven instrumental in deriving spectral properties of random matrices. For example, local weak convergence has been a key tool for studying the spectral properties of random graphs with bounded expected degrees and heavy-tailed random matrices, as shown in \cite{bordenave2010resolvent,enriquez2016spectra,bordenave2011rank,jung2018delocalization,bordenave2024large}. Similarly, action convergence \cite{backhausz2022action} and local-global graph limits \cite{backhausz2019almost} have provided new perspectives on understanding graph-based random matrices.

The interplay between graphon theory and random matrix theory has also been useful for the study of the limiting spectral distributions of various matrix models. For instance, Wigner-type matrices have been analyzed through graphon theory in \cite{zhu2020graphon,sanders2023singular}, while random Laplacian matrices have been investigated in \cite{chakrabarty2021spectra,chatterjee2022spectral}. 
These approaches have further developed to find the limiting spectra of inhomogeneous random graphs with bounded expected degrees \cite{avena2023limiting} and provided convergence results for spectral norms \cite{cheliotis2024limit}. The graphon framework has also been used recently to study the large deviation of empirical spectral measures of sparse random graphs \cite{augeri2024large} and in the study of nonlinear random matrices \cite{dabo2024traffic} and non-commutative probability theory \cite{cebron2024graphon}.

Together, these developments highlight the deep connections between graph limit frameworks and random matrix theory, offering powerful tools to tackle both theoretical and applied problems in the study of sparse and inhomogeneous random graphs.
In this paper, we establish a novel connection between graphon theory and the linear spectral statistics of inhomogeneous random graphs, extending beyond the graphon-based approach to the empirical spectral distributions of inhomogeneous random graphs presented in \cite{zhu2020graphon}.

\subsection{Global eigenvalue fluctuations}
The study of fluctuations in the limiting empirical spectral distributions (ESDs) for random matrices is a well-established topic in random matrix theory, with origins traced back to foundational works such as \cite{jonsson1982some, khorunzhy1996asymptotic, sinai1998central}; see also \cite{anderson2010introduction} and references therein. In recent years, this research has been extended to sparse random matrices and random graph-related matrices under various sparsity regimes and dependence structures \cite{shcherbina2010central, shcherbina2012central, benaych2014central, dumitriu2013functional, ben2015fluctuations, dumitriu2023global, kanazawa2023central}. 

Linear spectral statistics refer to the quantities of the form
$\mathcal{L}(f)=\sum_{i=1}^n f(\lambda_{i})$,
where $\lambda_{i}, 1\leq i\leq n$ are eigenvalues of a Hermitian matrix $M$ and $f$ is a suitable test function. In this paper, we focus on monomial test functions $f(x)=x^k$, where $M$ is a random matrix associated with a random graph.
 With additional effort, the class of test functions can be extended to a broader range of functions \cite{dumitriu2012global,shcherbina2010central,shcherbina2012central}.
 
For Wigner matrices with all finite moments, no additional scaling factor is required to obtain a central limit theorem (CLT) (see, e.g.,\cite{anderson2010introduction}) for $\mathcal L(f)$. Namely, when centered, for suitable smooth function $f$, $\mathcal L(f)$ converges in distribution to a normal distribution whose variance depends on $f$
\begin{align}
   \mathcal{L}(f)-\mathbb E \mathcal{L}(f)\to N(0,\sigma_{f}^2). 
\end{align}
However, for random matrices that are sparse or have heavy-tailed entries, additional scaling factors become necessary \cite{benaych2014central,shcherbina2010central,shcherbina2012central}. This is also the case for sparse inhomogeneous random graphs considered in this paper; see Section~\ref{sec:main} for details.

We also review existing results for the CLT of the linear spectral statistics of  homogeneous Erd\H{o}s-R\'{e}nyi graphs random graphs $G(n,p)$. For the centered adjacency matrix, CLTs for $\mathcal L(f)$ after proper normalization have been established for $p\to 0$ and  $np\to\infty$ in \cite{shcherbina2012central} and for $p=c/n$ in \cite{shcherbina2010central,benaych2014central}; see the survey \cite{guionnet2021bernoulli} for more details.  For global eigenvalue fluctuations of the non-centered adjacency matrix $A_n$, the fluctuation of the largest eigenvalue dominates those of the bulk eigenvalues. Consequently, the top eigenvalue makes the primary contribution, as shown in \cite{diaconu2022two}, illustrating the centering effect when considering global eigenvalue fluctuations of $A_n$.

\subsection{Contribution of this paper}
We present a unified approach to studying the global eigenvalue fluctuations of the adjacency matrix of inhomogeneous random graphs across all sparsity regimes. We summarize our main theorems in Table~\ref{fig:summary}. Notably, the derived limits vary significantly depending on the sparsity level and whether the adjacency matrix is centered or not. Below, we provide a detailed discussion of these contributions.

\begin{small}
\begin{table}
 \renewcommand{\arraystretch}{1.2}
\begin{tabular}{|c|c|c|c|c|}
\hline
Sparsity & CLT for $\overline{A}_n$ & CLT for $A_n$ & Scaling & Graphon convergence \\ 
\hline
$p\in (0,1)$ is a constant  & \makecell{Theorem~\ref{thm: centered_CLT_inhomo_dense}} & \makecell{Theorem~\ref{thm:nc_CLT_inhomo_dense}} & \makecell{Different} & \makecell{$\delta_1(W_n, W)\to 0$} \\ 
\hline
$p\to 0, np=n^{\Omega(1)}$ & \makecell{Theorem~\ref{thm: centered_CLT_inhomo_sparse1}} & \makecell{Parts (1)-(4) of \\ Theorem~\ref{thm:nc_CLT_inhomo_sparse}} & \makecell{Different} & \makecell{For $\overline{A}_n$: $t(T,W_n)$ converges; \\ For ${A}_n$, $\delta_{\Box}(W_n,W)\to 0$}\\ 
\hline 
 $np\to\infty, np=n^{o(1)}$ & \makecell{Theorem~\ref{thm: centered_CLT_inhomo_sparse1}} & \makecell{Part (5) of \\ Theorem~\ref{thm:nc_CLT_inhomo_sparse}} & \makecell{Same} & \makecell{ $t(T,W_n)$ converges} \\
\hline
$np\to c\in (0,\infty)$ & \makecell{Theorem~\ref{thm: inhomo_bounded_c}} & \makecell{Theorem~\ref{thm:nc_CLT_inhomo_bounded}}  & \makecell{Same} & \makecell{$t(T,W_n)$ converges}\\ 
\hline
\end{tabular}
\vspace{0.1 in}
\caption{Summary of main results. For different sparsity regimes of $p$, we compare the CLTs for $A_n$ and $\overline{A}_n$ in different theorems, along with the required graphon convergence assumptions. When $np = n^{\Omega(1)}$, the scaling factors for $A_n$ and $\overline{A}_n$ differ (see Equations~\eqref{eq:Xnk} and \eqref{eq:Lnk}), whereas for $np = n^{o(1)}$, the scaling factors are the same (see Equations~\eqref{eq:Xnk} and \eqref{eq:newlimit}). Additionally, weaker graphon convergence requirements are sufficient as $p$ decreases.}\label{fig:summary}
\end{table}
\end{small}

\subsubsection{Linear spectral statistics of Wigner-type matrices}
Wigner-type random matrices \cite{ajanki2017universality} refer to the Hermitian random matrix model where each entry is independent (up to symmetry) with different variances. Significant interest has focused on their eigenvalue fluctuations.  \cite[Theorem 4.2]{chatterjee2009fluctuations} established a CLT without an explicit variance under additional distribution assumptions, later generalized in \cite{adhikari2021linear}. 
 \cite{wang2023central} studied CLT with an explicit limiting variance for dense block Wigner-type matrix whose zero diagonals,  a special case of our model.   \cite{anderson2006clt} derived a CLT for polynomial test functions for matrices with variance profiles from discretized continuous functions on $[0,1]^2$. In the dense case, \cite{landon2024single} established a global CLT for smooth functions under additional assumptions, with explicit variance.

In this paper, we establish the first CLT for the linear spectral statistics of sparse Wigner-type matrices, covering all sparsity regimes. Our results compute the explicit covariance of the limiting Gaussian process using graphon theory.

\subsubsection{The centering effect}
For  $G(n,p)$, the bulk eigenvalues of the adjacency matrix $A_n$ and the centered adjacency matrix $\overline{A}_n$ are similarly \cite{tran2013sparse,erdHos2013spectral,alt2021extremal}, since $\mathbb E A_n$ is a rank-1 perturbation.
For inhomogeneous random graphs, when $np=\Omega(\log^c n)$ and $\mathbb EA_n$ is finite-rank, it was shown in \cite{chakrabarty2020eigenvalues,lunde2023subsampling} that outlier eigenvalues are asymptotically jointly Gaussian, generalizing the homogeneous case in \cite{erdHos2012spectral}.  

When $\mathbb E A_n$ is of full rank, one can not treat it as a finite-rank perturbation of a Wigner-type matrix. Without the low-rank assumption on the graphon $W$, the convergence of $\lambda_i(A_n)$ for every fixed $i$ of a dense random graph with a graphon limit was shown in \cite{borgs2012convergent}. Fluctuation of the largest eigenvalue of the adjacency matrix of a dense $W$-random graph was recently studied in \cite{chatterjee2024fluctuation}, whose limiting distribution, depending on the property of the graphon $W$, can be non-Gaussian.

 We demonstrate that without any low-rank assumption on the graphon $W$, after proper normalization, a CLT for $A_n$  holds with an explicit variance in all sparsity regimes.  Additionally, we identified a phase transition of the centering effect:
 \begin{itemize}
\item For $np=n^{\Omega(1)}$, different CLTs hold for $A_n$ and $\overline{A}_n$ with different scaling factors.
\item For $np=n^{o(1)}$, both matrices follow the same CLT with identical scaling factors. 
 \end{itemize}
 See Section~\ref{sec:main} for more details. This presents the first results in the literature on the CLT for the linear spectral statistics of $A_n$ without centering in inhomogeneous random graphs across all sparsity regimes.

In the sparse regime ($np\to\infty, p\to 0$),  we identify infinitely many phase transitions for as $p$ decreasing. For $ n^{\frac{1}{m}}\ll np\ll n^{\frac{1}{m-1}}$ ($m\geq 2$) and   $n^{1-\frac{1}{m}}p\to c$, different limiting covariances arise. See Theorem~\ref{thm:nc_CLT_inhomo_sparse} for more details.  Moreover, in the bounded-degree regime,  our covariance formula in Theorems~\ref{thm: inhomo_bounded_c} and~\ref{thm:nc_CLT_inhomo_bounded} recover the one for the centered adjacency matrix of Erd\H{o}s-R\'enyi graphs in \cite[Theorem 1.4]{benaych2014central}.

\subsubsection{Three types of graphon convergence}
All results for the CLTs in this paper assume the variance profile matrix $S_n$ of the inhomogeneous random graph converges to limit under various types of graphon convergence. The limiting covariance of the CLT is expressed in terms of homomorphism densities from finite graphs to a graphon.

A notable feature of our results is that the required topology for graphon convergence becomes weaker as the sparsity parameter $p$ decreases. This occurs because the finite graphs contributing to the limiting variance term become simpler in sparser regimes.
\begin{itemize}
    \item Dense regime ($p\in (0,1)$): A stronger notion of convergence, called $\delta_1$-convergence (see Assumption~\ref{assumption:delta1}), is needed. This is due to the influence of multigraph homomorphism densities on the variance term.
    \item Sparse regime ($np\to \infty, p\to 0$): Multigraph homomorphism densities no longer contribute to the limiting variance, and convergence of tree homomorphism suffices for the CLT of $\overline{A}_n$. While for $A_n$, $\delta_{\Box}$ convergence in required when $np=n^{\Omega(1)}$ (Assumption~\ref{assumption:deltaBox}).  
  \item For the CLT of $\overline{A}_n$ when $np\to c$, and the CLT of $A_n$ when $np=n^{o(1)}$, the only nonzero covariance in the limit arises from homomorphism densities of trees to a graphon. Consequently, only a weaker assumption on finite tree homomorphism densities is required (Assumption~\ref{assumption:tree}), as shown in \cite{zhu2020graphon}.
\end{itemize}

Furthermore, by adapting the proof in \cite[Theorem 5.1]{zhu2020graphon},  all results  in Section~\ref{sec:main} are also valid for $W$-random graphs where the inhomogeneous random graph is generated such that each edge  $\{i,j\}$ is connected independently with probability  $p W(x_i,x_j)$, with $x_1,\dots,x_n$ i.i.d. samples uniformly distributed on $[0,1]$.

\subsection*{Organization of the paper}
    The rest of the paper is organized as follows. Section~\ref{sec:prelim} introduces key definitions and lemmas from graphon theory. In Section~\ref{sec:main}, we present the main results for the centered and non-centered adjacency matrices of inhomogeneous random graphs across different sparsity regimes. 
Section~\ref{sec:proof_centered} contains the proofs of the CLTs for the centered adjacency matrix, while Section~\ref{sec:proof_adj} focuses on the proofs of the CLTs for the non-centered adjacency matrix.
Appendix~\ref{sec:appendix} provides the proofs of several technical lemmas, and Appendix~\ref{sec:size} includes additional results on the sizes of graph classes defined in Definitions~\ref{def: graph_tree} and~\ref{def: TC}.

\subsubsection*{Notation} For two sequences $f_n,g_n\geq 0$ we denote $f_n\ll g_n$ if $\frac{f_n}{g_n}\to 0$ as $n\to\infty$. 


\section{Graphon theory}\label{sec:prelim}
In this section, we introduce fundamental definitions from graphon theory. For more details, see \cite{lovasz2012large}. A \textit{graphon} is a symmetric, integrable function $W:[0,1]^2\to \mathbb R$.	
Here symmetric means $W(x,y)=W(y,x)$ for all $x,y\in[0,1]$.

Every weighted graph $G$ has an associated graphon $W^G$ constructed as follows: First, divide the interval $[0,1]$ into subintervals $I_1,\dots, I_{|V(G)|}$, each of length $\frac{1}{|V(G)|}$. Then give the edge weight  $\beta_{ij}$ on $I_i\times I_j$, for all $i,j\in V(G)$. In this way, every finite weighted graph gives rise to a graphon.

The most important metric on the space of graphons is the \textit{cut metric}. The space of all graphons taking values in $[0,1]$, endowed with the cut metric, forms a compact metric space.

\begin{definition}[cut norm]\label{cutmetric}
For a graphon $W:[0,1]^2\to\mathbb R$, the \textit{cut norm} is defined as
\begin{align*}
	\|W\|_{\Box}:=\sup_{S,T\subseteq [0,1]}\left|\int_{S\times T} W(x,y)dx dy\right|,
\end{align*}
	where $S$, $T$ range over all measurable subsets of $[0,1]$. For two graphons $W, W': [0,1]^2\to\mathbb R$, we define 
$
	d_{\Box}(W,W'):=\|W-W'\|_{\Box}
$.
The \textit{cut metric} $\delta_{\Box}$ is then given by
\begin{align}
\delta_{\Box}(W,W'):=\inf_{\sigma} d_{\Box}(W^{\sigma},W'),	 \notag
\end{align}
where $\sigma$ ranges over all measure-preserving bijections $[0,1]\to [0,1]$ and 
$W^{\sigma}(x,y):=W(\sigma(x),\sigma(y))$.
\end{definition}
The cut metric provides a notion of convergence for graph sequences, with the limiting object being the graphon.

Similarly,  the $\delta_1$-distance  is defined as  
\begin{align}
\delta_{1}(W,W'):=\inf_{\sigma} \|W^{\sigma}-W'\|_1=\inf_{\sigma}  \int_{[0,1]^2} |W^{\sigma}(x,y)-W'(x,y)|dxdy.	 \notag
\end{align}
By definition, for any two graphons $W,W'$, $\delta_{\Box}(W,W')\leq \delta_1(W,W')$.

Another way of defining the convergence of graphs is to consider graph homomorphisms.
  
 \begin{definition}[homomorphism density]\label{def: homomorphism density}
 	For any graphon $W$ and multigraph $F=(V,E)$ (without loops), define the \textit{homomorphism density} from $F$ to $W$ as 
\begin{align}
t(F,W):=\int_{[0,1]^{|V|}} \prod_{ij\in E}W(x_i,x_j) \prod_{i\in V}dx_i.	\notag
\end{align}
 \end{definition}


It is natural to think two graphons $W$ and $W'$ are similar if they have similar homomorphism densities from any finite graph $G$. This leads to the following definition of left convergence.

\begin{definition}[left convergence]
	Let $W_n$ be a sequence of graphons. We say  $W_n$ is \textit{convergent from the left} if $t(F,W_n)$ converges for any finite simple graph $F$ (without loops or multi-edges). 
\end{definition}
The importance of homomorphism densities lies in their ability to characterize convergence under the cut metric.
Let $\mathcal W_0$ be the set of all graphons such that $0\leq W\leq 1$.
The following is a characterization of convergence in the space $\mathcal W_0$.

\begin{lemma}[Theorem 11.5 in \cite{lovasz2012large}] \label{graphonconvergence}
Let $\{W_n\}$ be a sequence of graphons in $\mathcal W_0$ and let $W\in \mathcal W_0$.	Then  $t(F,W_n)\to t(F,W)$ for all finite simple graphs if and only if $\delta_{\Box}(W_n,W)\to 0$.
\end{lemma}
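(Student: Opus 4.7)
The plan is to prove both directions separately. The forward direction (cut convergence implies density convergence) follows from a Lipschitz-type bound known as the Counting Lemma, while the backward direction requires combining compactness of the space of graphons with uniqueness of the density profile.

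For the forward direction, I would establish that for any finite simple graph $F$ with edge set $E(F)$,
\begin{align}
|t(F,W) - t(F,W')| \leq |E(F)| \cdot \delta_{\Box}(W,W').
\end{align}
Fixing an enumeration $e_1,\dots,e_m$ of the edges of $F$, the difference $t(F,W) - t(F,W')$ expands as a telescoping sum of $m$ terms, where the $k$-th term places $W - W'$ on $e_k$ and keeps the other edges weighted by either $W$ or $W'$, all taking values in $[0,1]$. Integrating out the variables not incident to $e_k$ leaves an integral of $(W - W')(x,y)$ against a product of two $[0,1]$-valued kernels, each depending on only one of $x,y$; a standard splitting reduction bounds such an integral by $d_{\Box}(W, W')$. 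Since $t(F,\cdot)$ is invariant under measure-preserving rearrangements of $[0,1]$, the same inequality holds with $\delta_{\Box}$, and specialising to $W' = W_n$ gives the forward direction.

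For the backward direction, I would invoke two deep results from graphon theory: first, the quotient of $\mathcal{W}_0$ under weak isomorphism, equipped with $\delta_{\Box}$, is a compact metric space (the Lov\'asz--Szegedy theorem, proved via a weak regularity lemma); second, the family $\{t(F,\cdot) : F \text{ finite simple}\}$ determines a graphon up to weak isomorphism (the Moment Lemma). With these in hand, I would argue by subsequence extraction. Given any subsequence $\{W_{n_k}\}$, compactness yields a further subsequence $\{W_{n_{k_j}}\}$ converging in $\delta_{\Box}$ to some $W^{\star} \in \mathcal{W}_0$. The already-proven forward direction then gives $t(F, W^{\star}) = \lim_{j} t(F, W_{n_{k_j}}) = t(F,W)$ for every finite simple $F$, so the Moment Lemma forces $\delta_{\Box}(W^{\star}, W) = 0$. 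Hence every subsequence of $\{W_n\}$ admits a further subsequence converging to $W$ in $\delta_{\Box}$, which is equivalent to $\delta_{\Box}(W_n, W) \to 0$.

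The principal obstacle is that the two auxiliary ingredients used in the backward direction are themselves substantial theorems: compactness of $(\widetilde{\mathcal{W}}_0, \delta_{\Box})$ rests on a Szemer\'edi-type regularity approximation of graphons by step functions with boundedly many parts, and the Moment Lemma relies on a sampling-and-concentration argument identifying $W$ through the law of a $W$-random graph. Both are standard in \cite{lovasz2012large}, and I would cite them as black boxes rather than redo their proofs. With them granted, the remaining work is an elementary telescoping estimate and a textbook subsequence argument, so the conceptual depth of the equivalence is concentrated in those two external inputs.
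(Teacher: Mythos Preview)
The paper does not prove this lemma at all: it is stated as Theorem~11.5 of \cite{lovasz2012large} and used as a black box. Your sketch is correct and is precisely the standard argument given in that reference (Counting Lemma for the forward direction, compactness of $(\widetilde{\mathcal W}_0,\delta_\Box)$ plus the Moment Lemma and a subsequence argument for the converse), so there is nothing to compare against here.
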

The following lemma shows that the homomorphism density function is continuous under the cut metric only when $F$ is a simple graph.
\begin{lemma}[Lemma C.2, \cite{janson2010graphons}]\label{lem: graphon_simple}
    The mapping $W \mapsto t(F,W)$ is continuous on $(\mathcal{W}_0, \delta_{\square})$ if and only if F is a simple graph.
\end{lemma}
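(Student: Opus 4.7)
The plan is to prove the two directions separately. The ``if'' direction (continuity when $F$ is simple) is essentially the content of Lemma~\ref{graphonconvergence}: if $F$ is a finite simple graph and $\delta_\square(W_n, W) \to 0$ with $W_n, W \in \mathcal{W}_0$, then Lemma~\ref{graphonconvergence} directly gives $t(F, W_n) \to t(F, W)$, which is continuity of $W \mapsto t(F, W)$ at the arbitrary point $W$. Alternatively, one could invoke a counting-lemma-style inequality of the form $|t(F, W) - t(F, W')| \leq |E(F)| \cdot d_\square(W, W')$, together with the fact that $t(F, \cdot)$ is invariant under measure-preserving rearrangements, to get the same conclusion with an explicit modulus of continuity.

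For the ``only if'' direction, I need to exhibit, for each multigraph $F$ with at least one multi-edge, a graphon $W \in \mathcal{W}_0$ and a sequence $\{W_n\} \subset \mathcal{W}_0$ with $\delta_\square(W_n, W) \to 0$ but $t(F, W_n) \not\to t(F, W)$. The natural candidate is to take the constant graphon $W \equiv 1/2$ together with $\{0,1\}$-valued ``checkerboard'' graphons $W_n = \mathbf{1}_{A_n}$, where $A_n \subset [0,1]^2$ is a symmetric measurable set of Lebesgue measure $1/2$ whose oscillations live at scale $1/n$; for example $A_n = \{(x,y) : \lfloor n x\rfloor + \lfloor n y \rfloor \text{ is even}\}$.

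The key observation that makes the counterexample work is that, because $W_n$ takes only the values $0$ and $1$, we have $W_n^k = W_n$ pointwise for every $k \geq 1$. Hence if $F_s$ denotes the underlying simple graph obtained from $F$ by collapsing every multi-edge to a single edge, then $t(F, W_n) = t(F_s, W_n)$. By the simple-graph direction of Lemma~\ref{graphonconvergence} applied to $F_s$, we get $t(F_s, W_n) \to t(F_s, 1/2) = (1/2)^{|E(F_s)|}$, whereas direct computation gives $t(F, 1/2) = (1/2)^{|E(F)|}$. Since $|E(F)| > |E(F_s)|$ exactly when $F$ has a multi-edge, these limits differ, so $t(F, W_n) \not\to t(F, W)$ and continuity fails.

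The one step that actually needs verification is the claim that the chosen checkerboard graphons satisfy $\delta_\square(W_n, 1/2) \to 0$; this is where I expect to spend the real work, although it remains routine. One shows directly that $d_\square(W_n - 1/2) \to 0$ by estimating $\big|\int_{S \times T}(W_n(x,y) - 1/2)\, dx\, dy\big|$ uniformly over measurable $S, T \subseteq [0,1]$, exploiting that the oscillations of $W_n - 1/2$ occur on a grid of scale $1/n$ and therefore average out against any fixed measurable set as $n \to \infty$ (this is essentially a two-dimensional Riemann--Lebesgue-type cancellation, and the resulting bound is $O(1/n)$). Once this cut-norm estimate is established, the entire argument closes.
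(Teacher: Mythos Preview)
The paper does not actually prove this lemma; it merely cites it from \cite{janson2010graphons} and uses it as a black box. So there is no ``paper's proof'' to compare against, and I will simply assess your argument on its own merits.

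Your overall architecture is sound. The ``if'' direction is indeed an immediate consequence of Lemma~\ref{graphonconvergence}, and your idea for the ``only if'' direction --- take $\{0,1\}$-valued graphons $W_n$ converging to the constant $1/2$ in $\delta_\square$, and exploit $W_n^k = W_n$ to collapse $t(F,W_n)$ to $t(F_s,W_n)$ --- is exactly the right mechanism.

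However, your specific choice of $W_n$ is wrong, and the step you flagged as ``routine'' in fact fails. The checkerboard $W_n = \mathbf{1}_{A_n}$ with $A_n = \{(x,y): \lfloor nx\rfloor + \lfloor ny\rfloor \text{ even}\}$ does \emph{not} converge to $1/2$ in the cut metric. Indeed, $W_n(x,y) - \tfrac12 = \tfrac12(-1)^{\lfloor nx\rfloor}(-1)^{\lfloor ny\rfloor}$ has product structure, so if you take $S=T=\bigcup_{k \text{ even}}[k/n,(k+1)/n)$ you get $\int_{S\times T}(W_n-\tfrac12)\,dx\,dy \approx \tfrac18$ for every $n$. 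The supremum in the cut norm is over \emph{all} measurable $S,T$, not just sets fixed independently of $n$, so your Riemann--Lebesgue heuristic does not apply. More structurally: $W_n$ is the graphon of the graph on $n$ vertices with $i\sim j$ iff $i+j$ is even, i.e.\ two disjoint cliques; under a measure-preserving bijection every such $W_n$ is (for even $n$) equal to the fixed graphon $U=\mathbf{1}_{[0,1/2)^2\cup[1/2,1)^2}$, so $\delta_\square(W_n,\tfrac12)=\delta_\square(U,\tfrac12)>0$ is constant in $n$.

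The fix is to replace the checkerboard by a genuinely quasirandom sequence of $\{0,1\}$-valued graphons: e.g.\ the graphons of Paley graphs, or of realizations of $G(n,1/2)$, or any sequence satisfying the Chung--Graham--Wilson quasirandomness conditions at density $1/2$. For any such sequence one has $\delta_\square(W_n,\tfrac12)\to 0$ (this is one of the equivalent formulations of quasirandomness), and then your argument via $t(F,W_n)=t(F_s,W_n)\to (1/2)^{|E(F_s)|}\neq (1/2)^{|E(F)|}=t(F,\tfrac12)$ goes through unchanged.
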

To guarantee convergence of homomorphism density for loopless multigraphs, we need a stronger notation of convergence under the $\delta_1$ metric. Lemma~\ref{lem: graphon_multi} was stated without a proof in \cite{janson2010graphons}. We provide its proof in Appendix~\ref{sec:appendix} for completeness. 
\begin{lemma}[Lemma C.4, \cite{janson2010graphons}]\label{lem: graphon_multi}
    The mapping  $W \mapsto t(F,W)$ is continuous on $(\mathcal{W}_0, \delta_{1})$ for every loopless multigraph.
\end{lemma}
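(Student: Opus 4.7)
My plan is to establish the Lipschitz bound
\begin{align}
|t(F,W)-t(F,W')|\le |E(F)|\cdot \delta_1(W,W') \notag
\end{align}
for all $W,W'\in\mathcal{W}_0$ and every loopless multigraph $F$, which immediately gives continuity. The argument rests on two ingredients: invariance of $t(F,\cdot)$ under measure-preserving bijections, and a telescoping decomposition edge by edge.

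First, I would record that $t(F,W^\sigma)=t(F,W)$ for any measure-preserving bijection $\sigma\colon[0,1]\to[0,1]$; this follows by the obvious change of variables $x_i\mapsto\sigma(x_i)$ in Definition~\ref{def: homomorphism density}. Hence once the bound $|t(F,W)-t(F,W')|\le |E(F)|\cdot\|W-W'\|_1$ is proved, replacing $W$ by $W^\sigma$ and taking the infimum over $\sigma$ yields the $\delta_1$-bound.

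Next, enumerate the edges of $F$ as $e_1,\dots,e_m$ with endpoints $\{i_s,j_s\}$ (with $i_s\ne j_s$ since $F$ is loopless; parallel edges are permitted and simply correspond to repeated endpoint pairs). Using a one-edge-at-a-time telescoping,
\begin{align}
t(F,W)-t(F,W')=\sum_{s=1}^m \int_{[0,1]^{|V|}} \Bigl(\prod_{k<s} W'(x_{i_k},x_{j_k})\Bigr)\bigl[W-W'\bigr](x_{i_s},x_{j_s})\Bigl(\prod_{k>s} W(x_{i_k},x_{j_k})\Bigr)\prod_{i\in V} dx_i. \notag
\end{align}
Because $W,W'\in\mathcal{W}_0$ take values in $[0,1]$, every factor $W'(x_{i_k},x_{j_k})$ and $W(x_{i_k},x_{j_k})$ with $k\ne s$ is bounded by $1$. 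Applying the triangle inequality and then Fubini to integrate out every variable not appearing in $e_s$, each summand is at most $\|W-W'\|_1$, so $|t(F,W)-t(F,W')|\le m\|W-W'\|_1$ as desired.

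The only subtlety I anticipate is the handling of parallel edges: since a pair of vertices $\{u,v\}$ may be the endpoint set of several distinct $e_s$, the factor $W(x_{i_s},x_{j_s})$ can appear multiple times in the integrand. This is precisely why cut-norm control fails (a product like $W(x,y)^2$ is nonlinear), but for the telescoping argument above it poses no problem, because each occurrence is still an independent factor bounded by $1$, and the Fubini step only depends on the pair of endpoints of the single edge $e_s$ currently being differenced. The looplessness hypothesis is used implicitly: if $F$ had a loop at a vertex $i$, the corresponding factor would be $W(x_i,x_i)$, which depends on the values of $W$ on the measure-zero diagonal and is not well-defined for an $L^1$ graphon; excluding loops avoids this ambiguity. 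No other obstacle arises, and the Lipschitz bound yields continuity in $(\mathcal{W}_0,\delta_1)$.
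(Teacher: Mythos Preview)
Your proposal is correct and follows essentially the same approach as the paper: both establish the Lipschitz estimate $|t(F,W)-t(F,W')|\le |E|\,\delta_1(W,W')$ via an edge-by-edge telescoping decomposition, bound the remaining factors by $1$ using $W,W'\in\mathcal{W}_0$, and then pass from $\|W-W'\|_1$ to $\delta_1$ using invariance under measure-preserving bijections. The only cosmetic difference is the order of $W$ and $W'$ in the telescoping products, and your added remarks on parallel edges and looplessness are helpful but not needed for the argument.
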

 The following lemma for the convergence of the graphon sequence $W_n'=W_n(1-pW_n)$ under $\delta_1$ metric is also needed in our proof. We provide its proof in Appendix~\ref{sec:appendix}.
\begin{lemma} \label{lem: W'}
Assume $p\in [0,1]$.
Let $W_n'=W_n(1-pW_n), W'=W(1-pW)$.
    Suppose $\delta_1(W_n, W)\to 0$, then $\delta_1(W_n',W')\to 0$.
\end{lemma}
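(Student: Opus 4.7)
The plan is to reduce the claim to a pointwise algebraic identity and then use that the factor multiplying $W_n^\sigma - W$ is bounded, so the $\delta_1$ distance of the composed graphons is controlled by $\delta_1(W_n, W)$ itself. Concretely, for any measure-preserving bijection $\sigma:[0,1]\to[0,1]$, since composition with $\sigma$ commutes with pointwise operations, $(W_n')^\sigma = W_n^\sigma(1 - p W_n^\sigma)$. Then I would expand
\begin{align}
(W_n')^\sigma - W'
&= W_n^\sigma(1 - p W_n^\sigma) - W(1 - p W) \\
&= (W_n^\sigma - W) - p\bigl((W_n^\sigma)^2 - W^2\bigr) \\
&= (W_n^\sigma - W)\bigl(1 - p(W_n^\sigma + W)\bigr).
\end{align}

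Next I would use the assumption that we are working in $\mathcal W_0$, so $W_n^\sigma, W \in [0,1]$ a.e., together with $p \in [0,1]$. This gives $p(W_n^\sigma + W) \in [0, 2]$ almost everywhere, and therefore
\begin{align}
\bigl|1 - p(W_n^\sigma + W)\bigr| \le 1 \quad \text{a.e.}
\end{align}
Taking absolute values in the identity above and integrating over $[0,1]^2$ yields
\begin{align}
\|(W_n')^\sigma - W'\|_1 \le \|W_n^\sigma - W\|_1.
\end{align}

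Taking the infimum over all measure-preserving bijections $\sigma$ on both sides gives $\delta_1(W_n', W') \le \delta_1(W_n, W)$, and the hypothesis $\delta_1(W_n, W) \to 0$ finishes the proof. There is no real obstacle here beyond recognizing the factorization $a(1-pa) - b(1-pb) = (a-b)(1 - p(a+b))$ and checking that the multiplier is uniformly bounded by $1$; the key structural point is that the same $\sigma$ simultaneously rearranges $W_n$ and $W_n^2$, so we do not need to optimize over two different rearrangements.
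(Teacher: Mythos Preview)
Your proof is correct and follows essentially the same approach as the paper: bound $\|(W_n')^\sigma - W'\|_1$ pointwise by a bounded multiple of $|W_n^\sigma - W|$, then take the infimum over $\sigma$. The paper instead adds and subtracts $W(1 - pW_n^\sigma)$ and uses the triangle inequality, obtaining $\delta_1(W_n',W') \le 2\,\delta_1(W_n,W)$; your direct factorization $a(1-pa)-b(1-pb)=(a-b)(1-p(a+b))$ is slightly cleaner and yields the sharper constant $1$.
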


\section{Main results}\label{sec:main}
In this paper, we study the spectrum of Hermitian random matrices under the following assumptions:
\begin{assumption}[Inhomogeneous random graphs]\label{assumption1}
Assume $A_n$ is  the adjacency matrix of a random graph satisfying the following conditions:
    \begin{enumerate}
    \item  The entries $a_{ij}$ are independent (up to symmetry) with $a_{ii}=0$, for all $i\in [n]$.
    \item $a_{ij} \sim \text{Ber}(p s_{ij})$, where $p\in (0,1)$ is the sparsity parameter and $S_n=(s_{ij})$is the variance profile matrix.
    \item The entries in $S_n$ satisfy $0\leq s_{ij}\leq C$ for an absolute constant $C>0$ and  $\sup_{ij} (ps_{ij}) \leq 1$. \label{assump: dense_1}
\end{enumerate}
\end{assumption}

The sparsity parameter $p$ determines the regime of the model:
\begin{itemize}
    \item \textit{Dense regime}:  $p$ is a constant independent of $n$.
    \item \textit{Sparse regime}: $p\to 0$ and $np\to\infty$.
    \item \textit{Bounded expected degree regime}:  $p\to 0$ and $np\to  c\in (0,\infty)$.
\end{itemize}

We associate a graphon $W_n$ to the matrix $S_n$. Treat $S_n$ as the adjacency matrix of a weighted complete graph such that the weight of each edge is $s_{ij}$. The graphon $W_n$ is defined as the corresponding graphon representation of $S_n$. 
The centered adjacency matrix $\overline{A}_n$ is defined  as 
$\overline{A}_n=A_n-\mathbb E A_n$.

When the variance profile matrix $S_n$  has a graphon limit, we show that for monomial test functions $f(x)=x^k$, the linear spectral statistics of  $A_n$ and $\overline{A}_n$, after proper rescaling, converge to centered Gaussian processes with an explicit covariance structure. 

  For different sparsity regimes and depending on whether $A_n$ is centered or not, different graphon convergence assumptions are required, as described below.

\begin{assumption}[$\delta_1$-convergence]\label{assumption:delta1}
    Assume $\delta_1(W_n, W) \rightarrow 0$ for some graphon $W$ as $n \rightarrow \infty$.
\end{assumption}

\begin{assumption}[$\delta_{\Box}$-convergence]\label{assumption:deltaBox}
    Assume $\delta_{\square}(W_n, W) \rightarrow 0$ for some graphon $W$ as $n \rightarrow \infty$.
\end{assumption}

\begin{assumption}[Convergence of tree homomorphism densities]\label{assumption:tree}
Assume for any finite tree $T$, there exists a constant $\beta_T\geq 0$ such that $t(T, W_n)\to \beta_T$  as $n \rightarrow \infty$.
\end{assumption}

\begin{remark}[Hierarchy of graphon convergence assumptions]
    Note that Assumption~\ref{assumption:delta1} is the strongest assumption, which ensures convergence of homomorphism densities of finite loopless multigraphs (Lemma~\ref{lem: graphon_multi}). Assumption~\ref{assumption:deltaBox} is weaker than $\delta_1$-convergence, but sufficient for convergence of homomorphism densities of finite simple graphs (Lemma~\ref{graphonconvergence}). Assumption~\ref{assumption:tree}  is the weakest, which only guarantees the convergence of finite tree homomorphism densities.
\end{remark}

We present the main theorems for $\overline{A}_n$ and $A_n$ in Sections~\ref{sec:centered} and \ref{sec:non_centered}, respectively.

\subsection{Centered adjacency matrices}\label{sec:centered}

 We consider  the scaled linear spectral statistics  defined as 
\begin{align}\label{eq:Xnk}
    X_{n,k} = \sqrt{p} \cdot \left( \mathrm{tr}\left(\frac{\overline{A}_n}{\sqrt{np}}\right)^k - \mathbb{E} \mathrm{tr}\left(\frac{\overline{A}_n}{\sqrt{np}}\right)^k \right).
\end{align}
For  $k=1$, $X_{n,1}=0$ since $A_n$ is diagonal-free. Thus, we assume $ k \geq 2$ throughout the paper. 

To describe the limiting variance, we introduce the following graph classes.

\begin{definition}[Two rooted trees with overlapping edges]\label{def: graph_tree}
    For any even integers $k,h\geq 2$, we define $\mathcal T^{k,h}_1 $  as a set of planar trees with $ |V| = |E|+1 =(k+h)/2$, constructed by joining two rooted planar trees of length $k/2$ and $h/2$ with one overlapping edge. See the left part of Figure~\ref{fig:dense_union_tree_center} for an example.    
    
    Similarly, We define \( \mathcal T^{k,h}_2 \) as a set of multigraphs $ \mathcal T^{k,h}_2 = (V, E)$ with \(|V| = |E| = (k+h)/2 \), formed by joining two rooted planar trees of length $ k/2$ and $ h/2$, sharing two vertices connected by two distinct edges. See the right part of Figure~\ref{fig:dense_union_tree_center} for an example.

   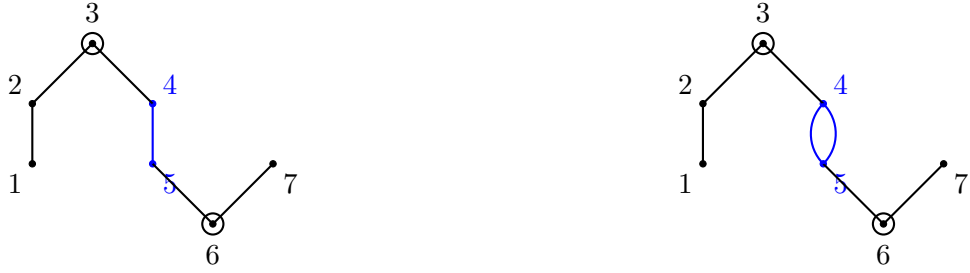
\begin{figure}
     \begin{minipage}{0.46\textwidth}
    \centering
    \begin{tikzpicture}[scale=0.8]
        \filldraw[black] (0, 1) circle (1.5pt) node[below left] {1};
        \filldraw[black] (0, 2) circle (1.5pt) node[above left] {2};
        \filldraw[black] (1, 3) circle (1.5pt) ;
        \draw[black, thick] (1,3) circle (5 pt) node[above=4pt] {3};
        \filldraw[blue] (2, 2) circle (1.5pt) node[above right] {4};
        \filldraw[blue] (2, 1) circle (1.5pt) node[below right] {5};
        \filldraw[black] (3, 0) circle (1.5pt) ;
        \draw[black, thick]  (3, 0) circle (5 pt) node[below=4pt] {6};
        \filldraw[black] (4, 1) circle (1.5pt) node[below right] {7};

        \draw[thick] (0, 1) -- (0, 2);
        \draw[thick] (0, 2) -- (1, 3) ;
        \draw[thick] (1, 3) -- (2, 2); 
        \draw[thick,blue] (2, 2) -- (2, 1); 
        \draw[thick] (2, 1) -- (3, 0); 
        \draw[thick] (3, 0) -- (4, 1); 
    \end{tikzpicture}
    \end{minipage}
    \hfill 
    \begin{minipage}{0.46\textwidth}
    \centering
    \begin{tikzpicture}[scale=0.8]
        \filldraw[black] (0, 1) circle (1.5pt) node[below left] {1};
        \filldraw[black] (0, 2) circle (1.5pt) node[above left] {2};
       \filldraw[black] (1, 3) circle (1.5pt) ;
        \draw[black, thick] (1,3) circle (5 pt) node[above=4pt] {3};
        \filldraw[blue] (2, 2) circle (1.5pt) node[above right] {4};
        \filldraw[blue] (2, 1) circle (1.5pt) node[below right] {5};
         \filldraw[black] (3, 0) circle (1.5pt) ;
        \draw[black, thick]  (3, 0) circle (5 pt) node[below=4pt] {6};
        \filldraw[black] (4, 1) circle (1.5pt) node[below right] {7};

        \draw[thick] (0, 1) -- (0, 2);
        \draw[thick] (0, 2) -- (1, 3) ;
        \draw[thick] (1, 3) -- (2, 2); 
        \draw[thick] (2, 1) -- (3, 0); 
        \draw[thick] (3, 0) -- (4, 1); 
         \draw[thick,blue] (2, 2)  to[out=225, in=135] (2, 1);
        \draw[thick,blue] (2, 2) to[out=315, in=45] (2, 1);  
    \end{tikzpicture}
    \end{minipage}
    \caption{We give an example of \( k=8 \) and $h = 6$. The left figure is an element in \(\mathcal T^{k,h}_1 \) constructed by a rooted tree one of length 4 with edges $\{1,2\}, \{2,3\}, \{3,4\}, \{4,5\}$ and the root vertex $3$,  and another rooted tree of length 3 with edges $ \{4,5\}, \{5,6\}, \{6,7\}  $ and a root vertex $6$, and one overlapping edge $\{4,5\} $.  The right figure is a corresponding element in \( \mathcal T^{k,h}_2 \)  constructed by two rooted planar trees, one of length 4 with edges  $ \{1,2\}, \{2,3\}, \{3,4\}, \{4,5\}  $ and another one of length 3 with edges $  \{4,5\}, \{5,6\}, \{6,7\}  $  having two shared vertices $4 $ and $5$.  Vertices  $4$ and $5$ are connected by two distinct edges.
}\label{fig:dense_union_tree_center}
\end{figure}
\end{definition}

\begin{definition}[Unicyclic graphs]\label{def: TC}
    For $k \geq r\geq 3$, we define  $\mathcal{TC}^{k}_r$ to be the set of unicyclic graphs spanned by a closed walk of length $k$  containing exactly one cycle of length $r$, where and all other edges not in the cycle are visited exactly twice. See the left part of Figure~\ref{fig:dense_with_cycle}  for an example.
    
    We define the set $\mathcal{TC}^{k}$ as  $\mathcal{TC}^{k} \coloneqq \bigcup_{3\leq r\leq k} \mathcal{TC}^{k}_r$.
 Furthermore, for integers $k, h \geq 3$, define $\mathcal{TC}^{k,h}$ as the set of all graphs with two roots, formed by the union of a graph in $\mathcal{TC}^{k}_r$ and another graph in $\mathcal{TC}^{h}_r$ that share the same cycle of length $r$. 
    See  Figure~\ref{fig:dense_with_cycle}  for an example.
  \end{definition}
           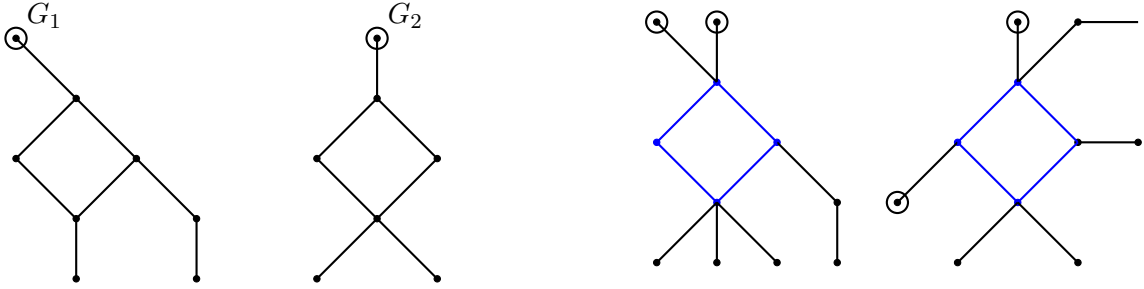
\begin{figure} 
     \begin{minipage}{0.46\textwidth}
    \centering
    \begin{tikzpicture}[scale=0.8]
        
        \filldraw[black] (1, 3) circle (1.5pt) ;
        \filldraw[black] (2, 1) circle (1.5pt) ;
        \filldraw[black] (2,2) circle (1.5pt) ;
        \filldraw[black] (2,4) circle (1.5pt) ;
        \filldraw[black] (1,5) circle (1.5pt);
        \draw[black, thick] (1,5) circle (5pt) node[above right] {$G_1$};
        \filldraw[black] (3,3) circle (1.5pt) ;
        \filldraw[black] (4,1) circle (1.5pt) ;
        \filldraw[black] (4,2) circle (1.5pt) ;

        \filldraw[black] (6, 3) circle (1.5pt) ;
        \filldraw[black] (6, 1) circle (1.5pt) ;
        \filldraw[black] (8, 1) circle (1.5pt) ;
        \filldraw[black] (7,2) circle (1.5pt) ;
        \filldraw[black] (7,4) circle (1.5pt) ;
        \filldraw[black] (7,5) circle (1.5pt);
        \draw[black, thick] (7,5) circle (5pt) node[above right] {$G_2$};
        \filldraw[black] (8,3) circle (1.5pt) ;

       \draw[thick] (1, 3) -- (2,4); 
        \draw[thick] (1, 3) -- (2,2);
       \draw[thick] (1,5) -- (2,4); 
       \draw[thick] (3,3) -- (4,2); 
       \draw[thick] (3,3) -- (2,4); 
       \draw[thick] (3,3) -- (2,2); 
       \draw[thick] (2, 1) -- (2,2); 
       \draw[thick] (4,2) -- (4,1); 

 \draw[thick] (7,5) -- (7,4); 
        \draw[thick] (7,4) -- (8,3);
       \draw[thick] (7,4) -- (6, 3);
         \draw[thick] (7,2) -- (8,3);
       \draw[thick](7,2) -- (6, 3); 
       \draw[thick] (7,2) -- (6, 1); 
       \draw[thick] (7,2) -- (8, 1); 

    \end{tikzpicture}
    \end{minipage}
    \hfill 
    \begin{minipage}{0.46\textwidth}
    \centering
    \begin{tikzpicture}[scale=0.8]

       \filldraw[blue] (1, 3) circle (1.5pt) ;
        \filldraw[black] (2, 1) circle (1.5pt) ;
        \filldraw[black] (1, 1) circle (1.5pt) ;
        \filldraw[black] (3, 1) circle (1.5pt) ;
        \filldraw[blue] (2,2) circle (1.5pt) ;
        \filldraw[blue] (2,4) circle (1.5pt) ;
        \filldraw[black] (1,5) circle (1.5pt);
        \draw[black, thick] (1,5) circle (5pt);
        \filldraw[black] (2,5) circle (1.5pt);
        \draw[black, thick] (2,5) circle (5pt);
        \filldraw[blue] (3,3) circle (1.5pt) ;
        \filldraw[black] (4,1) circle (1.5pt) ;
        \filldraw[black] (4,2) circle (1.5pt) ;

         \filldraw[blue] (6, 3) circle (1.5pt) ;
        \filldraw[black] (5, 2) circle (1.5pt) ;
         \draw[black, thick] (5, 2) circle (5pt);
        \filldraw[black] (6, 1) circle (1.5pt) ;
        \filldraw[black] (8, 1) circle (1.5pt) ;
       \filldraw[black] (8, 3) circle (1.5pt);
        \filldraw[blue] (7,2) circle (1.5pt) ;
        \filldraw[blue] (7,4) circle (1.5pt) ;
         \filldraw[black] (7,5) circle (1.5pt);
        \draw[black, thick] (7
        ,5) circle (5pt);
        \filldraw[black] (8,5) circle (1.5pt) ;
         \filldraw[black] (9,3) circle (1.5pt) ;

\draw[thick, blue] (1, 3) -- (2,4); 
        \draw[thick, blue] (1, 3) -- (2,2);
       \draw[thick] (1,5) -- (2,4);
       \draw[thick] (2,5) -- (2,4); 
       \draw[thick] (3,3) -- (4,2); 
       \draw[thick, blue] (3,3) -- (2,4); 
       \draw[thick, blue] (3,3) -- (2,2); 
       \draw[thick] (2, 1) -- (2,2); 
        \draw[thick] (2, 2) -- (1, 1);
         \draw[thick] (2, 2) -- (3, 1);
       \draw[thick] (4,2) -- (4,1); 

      \draw[thick] (7,5) -- (7,4); 
      \draw[thick] (8,5) -- (7,4);
      \draw[thick] (8,5) -- (9,5);
        \draw[thick, blue] (7,4) -- (8,3);
        \draw[thick] (8,3) --(9,3);
       \draw[thick, blue] (7,4) -- (6, 3); 
       \draw[thick] (6, 3) -- (5, 2); 
         \draw[thick, blue] (7,2) -- (8,3);
       \draw[thick, blue](7,2) -- (6, 3); 
       \draw[thick] (7,2) -- (6, 1); 
       \draw[thick] (7,2) -- (8, 1); 

    \end{tikzpicture}
    \end{minipage} \caption{ We given an example of $k = 12$, $h = 10$ and $r = 4$. The left figure are two graphs $ G_1 \in \mathcal{TC}^{k}_{r}$ and $G_2 \in \mathcal{TC}^{h}_{r} $. The right graph shows two possible graphs in $\mathcal{TC}^{k,h}$.}\label{fig:dense_with_cycle}
    \end{figure}

  With Definitions~\ref{def: graph_tree} and \ref{def: TC}, we are ready to state the CLT for $\overline{A}_n$ in the dense regime.

\begin{theorem}[CLT for $\overline{A}_n$, dense regime]\label{thm: centered_CLT_inhomo_dense}
  Let $A_n$  satisfy Assumption~\ref{assumption1} and $W'=W(1-pW)$. If $p$ is a constant  and Assumption~\ref{assumption:delta1} holds, then $\{X_{n,k}\}_{k\geq 2}$ converges to a centered Gaussian process $\{X_{k}\}_{k\geq 2}$ (in the sense of finite marginals) with
  \begin{small}
      
      \begin{align}
\mathrm{Cov}(X_k, X_h) =
    \begin{cases}
      p \sum_{G \in \mathcal {TC}^{k,h}} t( G, W')  &\text{if $k, h \geq 3$ are odd,} \\[10pt]
         \sum_{T \in \mathcal T^{k,h}_{1}}  t(T,W') -   4p \sum_{T \in \mathcal T^{k,h}_{2}}  t(T,W') +   p \sum_{G \in \mathcal{TC}^{k,h}} t(  G, W') 
        &\text{if $k,h \geq 2$ are even,} \\[10pt]
        0  &\text{otherwise.}
    \end{cases}
\end{align}
  \end{small}
\end{theorem}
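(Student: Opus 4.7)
The plan is to establish the joint CLT by the classical method of joint moments/cumulants. I would show that for any tuple $(k_1,\ldots,k_r)$ with $k_i\geq 2$, the covariances $\mathrm{Cov}(X_{n,k_i}, X_{n,k_j})$ converge to the stated formula, while the joint cumulant of order $r$ of $(X_{n,k_1},\ldots,X_{n,k_r})$ vanishes as $n\to\infty$ for every $r \geq 3$. This yields convergence of finite-dimensional distributions to the prescribed centered Gaussian process.

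Expanding each trace combinatorially as
\[
\mathrm{tr}(\overline A_n^k) = \sum_{i_0, \ldots, i_{k-1} \in [n]} \bar a_{i_0 i_1} \bar a_{i_1 i_2} \cdots \bar a_{i_{k-1} i_0}
\]
and using independence up to symmetry together with $\mathbb{E} \bar a_{ij} = 0$, the covariance $\mathrm{Cov}(X_{n,k}, X_{n,h})$ reduces to a sum over pairs of closed walks whose union graph $G$ is connected (otherwise the walks factor) and in which every distinct edge is used at least twice. Classifying by the isomorphism type of $G$ on $v$ vertices, $e$ distinct edges, and edge multiplicities $m_{uv}\geq 2$ summing to $k+h$, each type contributes at scale
\[
n^{\,v - (k+h)/2}\, p^{\,e - (k+h)/2 + 1}
\]
times a bounded function of the weights $s_{ij}$. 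With $p$ a constant, only $v = (k+h)/2$ survives at $\Theta(1)$, and then $e \in \{v - 1, v\}$. The parity of $k,h$ and the constraint that $G$ be spanned by two closed walks of prescribed lengths $k, h$ narrow the admissible isomorphism types of $G$: for even $k,h$ they are exactly $\mathcal T^{k,h}_1$ (tree, $e = v-1$), $\mathcal T^{k,h}_2$ (parallel-edge multigraph, $e = v$), and $\mathcal{TC}^{k,h}$ (unicyclic with an odd cycle, $e = v$); for odd $k,h$ only $\mathcal{TC}^{k,h}$ remains.

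Using the explicit Bernoulli central moments $\mathbb{E}\bar a_{ij}^2 = ps_{ij}(1-ps_{ij})$ and $\mathbb{E}\bar a_{ij}^4 = ps_{ij}(1-ps_{ij}) - 3[ps_{ij}(1-ps_{ij})]^2$ together with the step-function identification $W_n' = W_n(1-pW_n)$, the walk-pair sums convert to homomorphism densities $t(G, W_n')$, with an $O(1/n)$ error from non-injective index assignments. Lemma~\ref{lem: W'} combined with Assumption~\ref{assumption:delta1} gives $\delta_1(W_n', W') \to 0$, and Lemma~\ref{lem: graphon_multi} then yields $t(G, W_n') \to t(G, W')$ for the loopless multigraphs $G$ above; the $\delta_1$ topology (stronger than $\delta_\Box$) is essential here because $\mathcal T^{k,h}_2$ and $\mathcal{TC}^{k,h}$ are not simple graphs. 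The coefficient $-4p$ on $\mathcal T^{k,h}_2$ consolidates two sources: the subleading $-3(pW_n')^2$ term in the Bernoulli fourth moment (arising from the shared degree-$4$ edge of $\mathcal T^{k,h}_1$, which under the graphon identification produces $-3p\sum t(\mathcal T^{k,h}_2, W')$) and the direct walk-pair enumeration for parallel-edge topologies, contributing an additional $-p\sum t(\mathcal T^{k,h}_2, W')$; the $+p$ coefficient on $\mathcal{TC}^{k,h}$ arises analogously from the odd-cycle walk count. For the joint cumulant of order $r \geq 3$, an analogous enumeration shows that only \emph{edge-connected} walk tuples contribute (vertex-only shared structures factor through lower cumulants), and such union graphs satisfy $v \leq \sum_i k_i/2 + 2 - r$; the resulting contribution is $O(n^{2-r}) = o(1)$.

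The main obstacle I anticipate is the combinatorial bookkeeping in the classification step: rigorously enumerating walk-pair embeddings into each isomorphism type, confirming that no additional union-graph topologies contribute at $\Theta(1)$, and pinning down the exact coefficient $-4p$ on $\mathcal T^{k,h}_2$, which is particularly sensitive to the interaction between the Bernoulli fourth-moment expansion and the combinatorial multiplicity of walk-pair embeddings. The other delicate point is controlling non-injective index assignments uniformly in $n$ so that the sums identify cleanly with the homomorphism densities $t(G, W_n')$, so that Lemmas~\ref{lem: W'} and~\ref{lem: graphon_multi} can be applied directly.
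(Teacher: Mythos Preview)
Your approach is essentially the paper's: moment method via closed-walk expansion, classification of contributing union graphs by $(v,e)$, and a dependency-graph argument showing higher joint moments reduce to perfect matchings (the paper phrases this via Wick's formula rather than cumulants, but the content is identical).

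A few points in your sketch are misstated and would trip you up in the bookkeeping. First, $\mathcal T^{k,h}_2$ is \emph{not} a separate union-graph topology: the union graph $G_{I_k\cup J_h}$ is by construction a simple graph, so the only surviving shapes at $v=(k+h)/2$ are the trees in $\mathcal T^{k,h}_1$ ($v=e+1$) and the simple unicyclic graphs in $\mathcal{TC}^{k,h}$ ($v=e$). The multigraphs $\mathcal T^{k,h}_2$ arise only as a bookkeeping device inside the tree case: on the single shared edge one computes
\[
\mathbb E[\bar a^4]-\bigl(\mathbb E[\bar a^2]\bigr)^2 \;=\; ps(1-ps)\bigl(1-4ps(1-ps)\bigr),
\]
and the factor $-4ps(1-ps)$ doubles that edge in the homomorphism density, yielding $-4p\sum_{T\in\mathcal T^{k,h}_2} t(T,W')$. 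So the $-4p$ comes entirely from this single covariance computation on the tree topology (the $-3$ from the fourth central moment and the $-1$ from subtracting $\mathbb E[\bar a_{I_k}]\mathbb E[\bar a_{J_h}]$), not from any separate ``parallel-edge'' enumeration. Second, the graphs in $\mathcal{TC}^{k,h}$ are simple, and for even $k,h$ the shared cycle has \emph{even} length (since the walk length and cycle length must have the same parity); the $\delta_1$ hypothesis is genuinely needed only to handle the $\mathcal T^{k,h}_2$ multigraph densities and to pass from $W_n$ to $W_n'=W_n(1-pW_n)$ via Lemma~\ref{lem: W'}.
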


When $p\to 0$ and $np\to\infty$, a different CLT holds under a weaker assumption on the graphon convergence of $W_n$, as shown in the next theorem.

\begin{theorem}[CLT for $\overline{A}_n$, sparse regime] \label{thm: centered_CLT_inhomo_sparse1}
If $p\to 0$, $np\to\infty$ as $n\to\infty $ and Assumption~\ref{assumption:tree} holds, then $\{X_{n,k}\}_{k\geq 2}$ converges to a centered Gaussian process $\{X_{k}\}_{k\geq 2}$ with
 \begin{align}\label{eq:Cov_CLT_sparse_centered}
\mathrm{Cov}(X_k, X_h) =
    \begin{cases}
        \sum_{T \in \mathcal 
 T^{k,h}_{1}}  \beta_T  &\text{if $k,h$ are even,} \\ 
        0  &\text{otherwise.}
    \end{cases}
\end{align}
\end{theorem}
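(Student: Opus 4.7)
The plan is to establish convergence of finite-dimensional distributions by the method of moments. For fixed $r\geq 1$ and $k_1,\dots,k_r\geq 2$, I would compute the joint moments $\mathbb E[X_{n,k_1}\cdots X_{n,k_r}]$ via the trace-walk expansion of the powers of $\overline{A}_n$ and show they converge to the Gaussian moments prescribed by \eqref{eq:Cov_CLT_sparse_centered}. Equivalently, by the cumulant criterion for Gaussian convergence, it suffices to verify that $\mathrm{Cov}(X_{n,k},X_{n,h})$ converges to the claimed limit and that all joint cumulants of order $\geq 3$ vanish as $n\to\infty$.

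For the covariance, expand
\[
\mathrm{Cov}\bigl(\mathrm{tr}\,\overline{A}_n^k,\mathrm{tr}\,\overline{A}_n^h\bigr) = \sum_{(w_1,w_2)}\Bigl(\mathbb E\prod_e \overline{a}_e^{m_1(e)+m_2(e)} - \mathbb E\prod_e \overline{a}_e^{m_1(e)}\,\mathbb E\prod_e \overline{a}_e^{m_2(e)}\Bigr),
\]
where $w_1,w_2$ range over closed walks in $[n]$ of lengths $k,h$ and $m_i(e)$ is the multiplicity of $e$ in $w_i$. Since $\overline{a}_e$ is centered Bernoulli and entries are independent across distinct edges, a pair contributes a nonzero term only when every edge of $w_1\cup w_2$ has total multiplicity at least $2$ and $w_1,w_2$ share at least one edge. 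Grouping such pairs by the isomorphism class of their union graph $G$, with $v$ vertices and $e$ distinct edges, and using $\mathbb E \overline{a}_{ij}^m = ps_{ij}(1+o(1))$ for every $m\geq 2$ when $p\to 0$, each class contributes on the order of $n^{v-(k+h)/2}\,p^{e+1-(k+h)/2}$ after applying the scaling $p(np)^{-(k+h)/2}$.

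Each closed walk of length $k$ visits each of its distinct edges at least twice, so $e\leq (k+h)/2$, and any connected $G$ satisfies $v\leq e+1$ with equality iff $G$ is a tree. A $\Theta(1)$ contribution therefore requires saturation $v=e+1=(k+h)/2$, which forces $k,h$ to be even, each $w_i$ to be a tree walk visiting every edge exactly twice, and $G$ to be a tree obtained by gluing two rooted plane trees with $k/2$ and $h/2$ edges through a single shared edge---precisely the class $\mathcal T_1^{k,h}$. The classical DFS bijection between tree walks and rooted plane trees then identifies the contribution of each $T\in \mathcal T_1^{k,h}$ with the homomorphism density $t(T,W_n)$, up to $o(1)$ corrections from the injective vertex constraint and from higher Bernoulli central moments, which converges to $\beta_T$ by Assumption~\ref{assumption:tree}. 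All other isomorphism classes are strictly suppressed by at least one power of $p$: unicyclic structures in $\mathcal{TC}^{k,h}$ satisfy $v=e$, and multigraph structures like $\mathcal T_2^{k,h}$ lose a vertex through a double-edge identification. If $k$ or $h$ is odd, no saturating tree can exist since $k_i/2$ fails to be an integer, so the covariance tends to $0$, matching the ``otherwise'' case.

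For joint cumulants of order $r\geq 3$, the cluster expansion restricts the sum to $r$-tuples $(w_1,\ldots,w_r)$ whose union graph is connected. The optimal tree configuration with $r-1$ shared edges gives a contribution of order $(n\sqrt{p})^{-(r-2)}$, which tends to $0$ because $n\sqrt{p}\geq np\to\infty$. The main obstacle will be the combinatorial bookkeeping: enumerating the isomorphism classes of walk-union graphs and verifying that only $\mathcal T_1^{k,h}$ saturates the scaling bound in the sparse regime; setting up the plane-tree bijection so that each limiting constant matches $t(T,W_n)$ for the correct rooted-plane-tree class; and controlling subleading terms from $\mathbb E\overline{a}_{ij}^m=ps_{ij}(1-ps_{ij})(\cdots)$ and from injectivity of vertex labels so that they contribute only to the $o(1)$ corrections.
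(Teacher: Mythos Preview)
Your proposal is correct and follows essentially the same route as the paper: trace--walk expansion of $\mathrm{Cov}(X_{n,k},X_{n,h})$, classification of union graphs by $(v,e)$, identification of $\mathcal T_1^{k,h}$ as the unique saturating class when $p\to 0$ (with $\mathcal T_2^{k,h}$ and $\mathcal{TC}^{k,h}$ suppressed by an extra factor of $p$), and a moment argument for Gaussianity. The only presentational difference is that the paper handles the higher-order step by showing the dependency graph on the $t$ walks must be a perfect matching in the centered-moment expansion, whereas you phrase it via vanishing of joint cumulants of order $\geq 3$; these are equivalent. Two small wording slips to fix in a full write-up: the sentence ``each closed walk of length $k$ visits each of its distinct edges at least twice'' is false as stated (cycles are counterexamples)---the correct constraint, which you used earlier, is on total multiplicity across both walks; and $\mathcal T_2^{k,h}$ has the same vertex count as $\mathcal T_1^{k,h}$ but one extra (multi-)edge, so the $O(p)$ suppression comes from the extra edge factor, not a lost vertex.
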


 Appendix~\ref{sec:size} includes  calculations of the sizes of $\mathcal T_{1}^{k,h}, \mathcal T_{2}^{k,h}$, and $\mathcal{TC}^{k,h}$, which help  interpret Theorems~\ref{thm: centered_CLT_inhomo_dense} and \ref{thm: centered_CLT_inhomo_sparse1} in the homogeneous case  $s_{ij}=1$ for all $i,j\in [n]$.

In Theorem~\ref{thm: centered_CLT_inhomo_sparse1}, the only nontrivial covariance term arises from gluing two rooted planar trees by sharing a single overlapping edge. For the bounded expected degree regime $np\to c\in (0,\infty)$, a more general tree gluing is required.

  \begin{definition}[Tree gluing]\label{def:glue}
    For any two rooted planar trees $T_1, T_2$, we denote \( \mathcal{P}_{\#}(T_1, T_2) \)  the set of all trees formed by identifying subsets of edges in  $T_1$ and  $T_2$ such that the union remains a tree.  See Figure~\ref{fig: tree_gluing}  for  examples.
    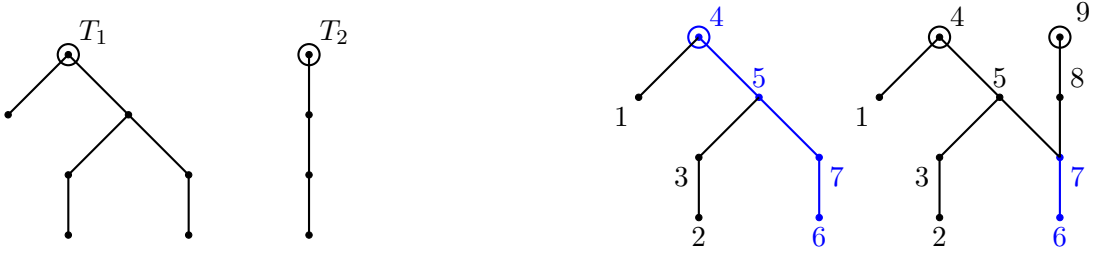
\begin{figure} 
     \begin{minipage}{0.46\textwidth}
    \centering
    \begin{tikzpicture}[scale=0.8]
        \filldraw[black] (1, 3) circle (1.5pt) ;
        \filldraw[black] (2, 1) circle (1.5pt) ;
        \filldraw[black] (2,2) circle (1.5pt) ;
        \filldraw[black] (2,4) circle (1.5pt) ;
        \draw[black, thick]  (2,4) circle (5 pt)  node[above right] {$T_1$};
        \filldraw[black] (3,3) circle (1.5pt) ;
        \filldraw[black] (4,1) circle (1.5pt) ;
        \filldraw[black] (4,2) circle (1.5pt) ;

        \filldraw[black] (6,1) circle (1.5pt) ;
        \filldraw[black] (6,2) circle (1.5pt) ;
        \filldraw[black] (6,3) circle (1.5pt) ;
        \filldraw[black] (6,4) circle (1.5pt) ;
         \draw[black, thick] (6,4) circle (5pt) node[above right] {$T_2$} ;

       \draw[thick] (1, 3) -- (2,4); 
       \draw[thick] (3,3) -- (4,2); 
       \draw[thick] (3,3) -- (2,4); 
       \draw[thick] (3,3) -- (2,2); 
       \draw[thick] (2, 1) -- (2,2); 
       \draw[thick] (4,2) -- (4,1); 

    \draw[thick] (6,1) -- (6,2);
    \draw[thick] (6,2) -- (6,3);
    \draw[thick] (6,4) -- (6,3);
 
    \end{tikzpicture}
    \end{minipage}
    \hfill 
    \begin{minipage}{0.46\textwidth}
    \centering
    \begin{tikzpicture}[scale=0.8]
        
        \filldraw[black] (1, 3) circle (1.5pt) node[below left] {1};
        \filldraw[black] (2, 1) circle (1.5pt) node[below] {2};
        \filldraw[black] (2,2) circle (1.5pt) node[below left] {3};
        \filldraw[black] (2,4) circle (1.5pt) ;
         \draw[black, thick] (2,4) circle (5pt)
        node[above right] {4};
        \filldraw[black] (3,3) circle (1.5pt) node[above] {5};
        \filldraw[blue] (4,1) circle (1.5pt) node[below] {6};
        \filldraw[blue] (4,2) circle (1.5pt) node[below right] {7};
        \filldraw[black] (4,3) circle (1.5pt) node[above right] {8};
        \filldraw[black] (4,4) circle (1.5pt) ;
        \draw[black, thick] (4,4) circle (5pt) node[above right=2pt] {9};

     \draw[thick] (3,3) -- (2,4);
       \draw[thick] (1, 3) -- (2,4);
       \draw[thick] (3,3) -- (4,2);
       \draw[thick] (3,3) -- (2,2);
       \draw[thick] (2, 1) -- (2,2);
       \draw[thick, blue] (4,2) -- (4,1);
       \draw[thick] (4,2) -- (4,3);
       \draw[thick] (4,3) -- (4,4);

         \filldraw[black] (-3, 3) circle (1.5pt) node[below left] {1};
        \filldraw[black] (-2, 1) circle (1.5pt) node[below] {2};
        \filldraw[black] (-2,2) circle (1.5pt) node[below left] {3};
        \filldraw[blue] (-2,4) circle (1.5pt) ;
        \draw[blue, thick] (-2,4) circle (5pt)
        node[above right] {4};
        \filldraw[blue] (-1,3) circle (1.5pt) node[above] {5};
        \filldraw[blue] (0,1) circle (1.5pt) node[below] {6};
        \filldraw[blue] (0,2) circle (1.5pt) node[below right] {7};

       \draw[thick] (-3, 3) -- (-2,4); 
       \draw[thick, blue] (-1,3) -- (0,2); 
       \draw[thick, blue] (-1,3) -- (-2,4); 
       \draw[thick] (-1,3) -- (-2,2); 
       \draw[thick] (-2, 1) -- (-2,2); 
       \draw[thick, blue] (0,2) -- (0,1); 

    \end{tikzpicture}
    \end{minipage}
    \caption{Given the rooted trees \( T_1 \) and \( T_2 \) in the left graph, the right graph shows two examples in \( P_{\#}(T_1, T_2) \), where \( T_1 \) and \( T_2 \) overlap in the blue part for each case.}\label{fig: tree_gluing}
\end{figure}

\end{definition}  
\begin{theorem}[CLT for $\overline{A}_n$, bounded expected degree regime] \label{thm: inhomo_bounded_c}
Let $A_n$  satisfy Assumption~\ref{assumption1}. 
Suppose $np\to  c\in (0,\infty)$ as $n \to \infty$ and Assumption~\ref{assumption:tree} holds. Then $\{X_{n,k}\}_{k\geq 2}$ converges to a centered Gaussian process $\{X_k\}_{k\geq 2}$ with
\begin{align}\label{equ: bounded_nc_cov_thm}
    \mathrm{Cov}(X_k, X_h) = \begin{cases}
     \sum_{i\leq k/2, j\leq h/2}\sum_{T_1\in \mathcal T_i, T_2\in \mathcal T_j} ~\sum_{T \in \mathcal{P}_{\#}(T_1, T_2)} \frac{\beta_T}{c^{(k+h)/2-v(T)}}  & k,h \text{ are even},\\
    0 &\text{otherwise}
    \end{cases}
\end{align}
where  $\mathcal T_i$  is the set of all rooted planar trees with $i$  edges.
\end{theorem}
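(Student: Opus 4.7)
The plan is to apply the method of moments: expand each trace as a sum over closed walks, compute the joint cumulants, and show that the second cumulant (covariance) converges to the right-hand side of \eqref{equ: bounded_nc_cov_thm} while all higher joint cumulants vanish. Starting from $\mathrm{tr}(\overline{A}_n^k) = \sum_{i_1,\ldots,i_k} \overline{a}_{i_1 i_2} \overline{a}_{i_2 i_3} \cdots \overline{a}_{i_k i_1}$, the centered covariance $\mathrm{Cov}(X_{n,k}, X_{n,h})$ becomes a sum over pairs of closed walks $(\mathbf{w}_1, \mathbf{w}_2)$ of lengths $k$ and $h$. By independence of the entries, only pairs whose edge-multisets share at least one edge contribute, with the additional requirement that every edge in the combined multiset has multiplicity at least two.

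First I would classify the contributing pairs by their combined underlying multigraph $G$ with $v(G)$ vertices, $e(G)$ edges, and combined edge multiplicities $(m_e)$ summing to $k+h$. Using the elementary identity $\mathbb{E}[\overline{a}_{ij}^{m}] = p s_{ij}(1 + O(p))$ valid for every $m \geq 2$ as $p \to 0$, together with the fact that the number of injective embeddings of $G$ into $[n]$ is $(1+o(1))\,n^{v(G)}$, the contribution of $G$-shaped pairs to the normalized covariance scales like $n^{v(G) - e(G) - 1}\,c^{v(G) - (k+h)/2}$ times a profile-dependent factor that tends to $\beta_G$ by Assumption~\ref{assumption:tree} when $G$ is a tree. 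In the bounded expected degree regime $np \to c$, all factors of $n$ therefore cancel precisely for tree-shaped $G$ (where $v(G) - e(G) = 1$) and the contribution vanishes otherwise, so only tree-shaped combined graphs survive in the limit.

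For each surviving tree $T$, I would decompose $T = T_1 \cup T_2$, where $T_i$ is the underlying tree of $\mathbf{w}_i$, so that each edge of $T_i$ is visited by $\mathbf{w}_i$ an even number of times at least two, and $T_1 \cap T_2$ is the set of shared edges. Using the standard bijection between closed walks of length $k$ on a tree with $i$ edges (each visited an even number $\geq 2$ of times) and a rooted planar structure in $\mathcal{T}_i$ together with an admissible multiplicity schedule summing to $k$, I would then sum over $i \leq k/2$, $j \leq h/2$, $T_1 \in \mathcal{T}_i$, $T_2 \in \mathcal{T}_j$, and all ways to glue $T_1, T_2$ whose union is still a tree, which is exactly the set $\mathcal{P}_{\#}(T_1, T_2)$. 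The remaining summation over the multiplicity schedules, together with the $c^{v(T) - (k+h)/2}$ scaling and Assumption~\ref{assumption:tree}, reproduces the $\beta_T / c^{(k+h)/2 - v(T)}$ terms in \eqref{equ: bounded_nc_cov_thm}. The parity constraint (vanishing of the covariance unless $k$ and $h$ are both even) follows because every closed walk on a tree has even length.

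For Gaussianity, I would apply the same graph-theoretic expansion to the joint cumulants of $(X_{n,k_1}, \ldots, X_{n,k_L})$: an $L$-fold cumulant corresponds to a connected union of $L$ closed walks, whose leading-order contribution after normalization scales as $n^{v(G) - e(G) - L/2}\,c^{v(G) - (k_1+\cdots+k_L)/2}$, and even for tree-shaped $G$ this is $n^{1-L/2}\to 0$ whenever $L \geq 3$, so all higher joint cumulants vanish. The main obstacle I expect is the combinatorial bookkeeping in the third paragraph: unlike the sparser regime of Theorem~\ref{thm: centered_CLT_inhomo_sparse1}, where only minimum-multiplicity walks ($m_e = 2$ on every edge) contribute, here all admissible multiplicity schedules contribute at the same order, and one must verify that summing over them produces precisely the weight $c^{-((k+h)/2 - v(T))}$ attached to each gluing $T \in \mathcal{P}_{\#}(T_1, T_2)$, rather than a more intricate generating function in $c$.
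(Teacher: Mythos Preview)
Your proposal is correct and follows essentially the same route as the paper: walk expansion of the covariance, the power-counting $p\cdot n^v p^e/(np)^{(k+h)/2}=O(n^{v-e-1})$ (with $np\to c$) to isolate tree-shaped union graphs, decomposition into gluings $T\in\mathcal P_\#(T_1,T_2)$ of the two underlying subtrees, the parity argument, and Gaussianity via vanishing of higher-order joint terms (the paper uses a joint-moment/Wick perfect-matching reduction instead of cumulants, which is equivalent). The obstacle you flag is exactly where the paper is most compressed; one small caution is that your stated bijection to (rooted planar structure in $\mathcal T_i$, multiplicity schedule) is not literally a bijection---walks with the same schedule can still differ in how repeated traversals of an edge are interleaved with excursions elsewhere---but since $\mathbb E[\overline a_e^{m}]=ps_e(1+O(p))$ for every $m\ge2$ and $\mathbb E[\overline a_{I_k}]\mathbb E[\overline a_{J_h}]$ is one power of $p$ smaller than $\mathbb E[\overline a_{I_k}\overline a_{J_h}]$ whenever the walks share an edge, the leading-order contribution of a walk pair depends only on the union tree $T$, which is precisely what both you and the paper ultimately use.
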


For the homogeneous case ($s_{ij}=1$ for all $i,j\in [n]$), Theorem~\ref{thm: inhomo_bounded_c} recovers \cite[Theorem 1.4 and Theorem 2.2]{benaych2014central} for the adjacency matrix $A_n$ of $G(n,p)$ with $p=\frac{c}{n}$.
As a corollary of Theorem~\ref{thm: inhomo_bounded_c}, we can generalize \cite[Theorem 1.4]{benaych2014central} to sparse generalized Wigner matrices.

\begin{cor}[CLT for inhomogeneous random graphs with constant expected degree]\label{cor:CLT}
 Assume  for all $x\in [0,1]$, 
\begin{align}\label{eq:constant_graphon_degree}
\lim_{n\to\infty} \int_{0}^1 W_n(x,y) dy=1.
\end{align}
 Then  $\{X_{n,k}\}_{k\geq 2}$ converges to a centered Gaussian process $\{\widetilde{L}_k\}_{k\geq 2}$ with
\begin{align}
    \mathrm{Cov}(X_k, X_h) = \sum_{i\leq k/2, j\leq h/2}\sum_{T_1\in \mathcal T_i, T_2\in \mathcal T_j} ~\sum_{T \in \mathcal{P}_{\#}(T_1, T_2)} \frac{1}{c^{(k+h)/2-v(T)}}.
\end{align}
\end{cor}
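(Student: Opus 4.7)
The plan is to derive this as a direct consequence of Theorem~\ref{thm: inhomo_bounded_c}: it suffices to verify that the pointwise degree condition~\eqref{eq:constant_graphon_degree} implies Assumption~\ref{assumption:tree} with $\beta_T = 1$ for every finite tree $T$. Once this reduction is in hand, substituting $\beta_T = 1$ into the covariance formula~\eqref{equ: bounded_nc_cov_thm} produces exactly the expression claimed in the corollary, so no further work is required.

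To establish $t(T, W_n) \to 1$ for every finite tree $T$, I plan to induct on the number of edges of $T$. The base case of a single vertex gives $t(T, W_n) = 1$ trivially. For the inductive step, pick any leaf $\ell$ of $T$ attached to a unique neighbor $v$, let $T' = T \setminus \{\ell\}$, and integrate out $x_\ell$ first via Fubini to obtain
\begin{align}
t(T, W_n) = \int_{[0,1]^{|V(T')|}} d_{W_n}(x_v) \prod_{(i,j) \in E(T')} W_n(x_i, x_j) \prod_{i \in V(T')} dx_i, \notag
\end{align}
where $d_{W_n}(x) := \int_0^1 W_n(x,y)\, dy$. Splitting $d_{W_n}(x_v) = 1 + (d_{W_n}(x_v) - 1)$ writes the above as $t(T', W_n)$ plus an error term bounded in absolute value by $C^{|E(T')|} \int_0^1 |d_{W_n}(x) - 1|\, dx$. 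Assumption~\ref{assumption1} gives the uniform bound $0 \le d_{W_n} \le C$, while~\eqref{eq:constant_graphon_degree} gives pointwise convergence $d_{W_n}(x) \to 1$; hence the bounded convergence theorem drives the error to zero, and the inductive hypothesis yields $t(T', W_n) \to 1$.

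With $\beta_T = 1$ for every finite tree $T$, Assumption~\ref{assumption:tree} holds, and Theorem~\ref{thm: inhomo_bounded_c} yields the claimed CLT with the stated covariance. The only real subtlety is that the hypothesis provides merely pointwise convergence of the degree profile and \emph{no} convergence of $W_n$ itself in any graphon metric; this is why one cannot simply invoke continuity of $t(T,\cdot)$ on $(\mathcal W_0, \delta_\Box)$ from Lemma~\ref{lem: graphon_simple} and must instead peel leaves one at a time, using the uniform boundedness from Assumption~\ref{assumption1} to justify each application of bounded convergence.
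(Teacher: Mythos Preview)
Your proposal is correct and follows the same overall strategy as the paper: reduce to Theorem~\ref{thm: inhomo_bounded_c} by verifying Assumption~\ref{assumption:tree} with $\beta_T=1$ for every finite tree $T$. The paper dispatches that verification in one line by citing \cite[Lemma 4.1]{zhu2020graphon}, whereas you supply a self-contained leaf-peeling induction combined with bounded convergence for the degree profile $d_{W_n}$. Both arguments are essentially the same in spirit; your version has the advantage of being self-contained and of making explicit why only pointwise convergence of $\int_0^1 W_n(\cdot,y)\,dy$ (together with the uniform bound $0\le W_n\le C$ from Assumption~\ref{assumption1}) is needed, rather than any graphon convergence of $W_n$ itself.
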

\begin{proof}
    From \cite[Lemma 4.1]{zhu2020graphon}, under the assumption~\eqref{eq:constant_graphon_degree}, $\beta_T=1$ for all finite trees. Then the statement follows from \eqref{equ: bounded_nc_cov_thm}.
\end{proof}

\subsection{Adjacency matrices}\label{sec:non_centered}
We define the rescaled linear spectral statistics of $A_n$ as 
\begin{align}\label{eq:Lnk}
L_{n,k} = np^{\frac{1}{2}+\mathbf{1}\{k=2\}} \cdot \left( \mathrm{tr}\left(\frac{A_n}{np}\right)^k - \mathbb{E}\mathrm{tr}\left(\frac{A_n}{np}\right)^k \right).
\end{align}

\begin{remark}[Scaling factor for $k=2$]\label{rmk:k=2}
For $k=2$, an additional scaling factor $p$ is included to ensure $\mathrm{Var}(L_{n,2})=\Theta(1)$ when $np=n^{\Omega(1)}$, as noticed in \cite{diaconu2022two}. Specifically, $\mathrm{tr}(A_n^2)=\sum_{ij} a_{ij}$ is a sum of independent Bernoulli random variables, and $L_{n,2}$ satisfies the classical CLT for independent random variables. For $k\geq 3$, the dependence among eigenvalues results in a different scaling factor.
\end{remark}

To describe the limiting variance, we introduce the following graph classes:

\begin{definition}[Two cycles with overlapping edges]\label{def: F-graph}
For any $k,h\geq 3$,  we define $ F_1^{k,h} = (V, E)$ as a simple graph with $|V| = |E|-1 = (k+h) - 2$ constructed from two cycles of length $k$ and $h$ sharing one overlapping edge.  See the left part of Figure~\ref{fig:dense_union_graph} for an example. 

Similarly,  we define ${F}_2^{k,h} = (V, E)$ as a multigraph with $|V|  = |E|-2 = (k+h)-2$ constructed by two cycles of length $k$ and $h$, sharing two vertices connected by two distinct edges.  See the right part of Figure~\ref{fig:dense_union_graph} for an example. 
\end{definition}
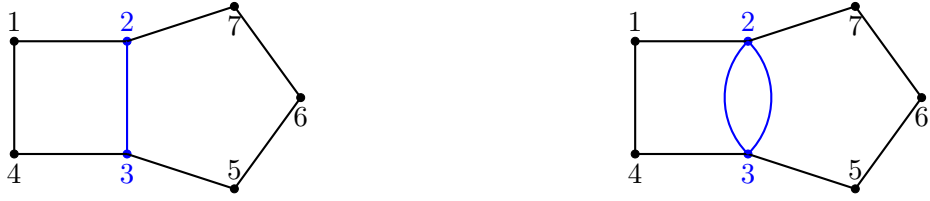
\begin{figure} 
    \centering
    \begin{minipage}{0.5\textwidth}
    \centering
    \begin{tikzpicture}[scale=1.5]
       \filldraw[black] (0, 1) circle (1pt) node[above] {1};
        \filldraw[blue] (1, 1) circle (1pt) node[above] {2};
        \filldraw[black] (1.951,-0.3091) circle (1pt) node[above] {5};
        \filldraw[black] (0, 0) circle (1pt) node[below] {4};
        \filldraw[blue] (1, 0) circle (1pt) node[below] {3};
        \filldraw[black] (2.538841, 0.5) circle (1pt) node[below] {6};
        \filldraw[black] (1.951, 1.3091) circle (1pt) node[below] {7};

        \draw[thick] (0,1) -- (1,1); 
        \draw[thick, blue] (1,1) -- (1,0); 
        \draw[thick] (1,0) -- (0,0);  
        \draw[thick] (0,0) -- (0,1);  
        \draw[thick] (1, 1) -- (1.951, 1.3091); 
        \draw[thick] (1, 0) -- (1.951,-0.3091);
        \draw[thick] (1.951, 1.3091) -- (2.538841, 0.5);
        \draw[thick] (1.951,-0.3091) -- (2.538841, 0.5);
    \end{tikzpicture}
    \end{minipage}%
    \hfill 
    \begin{minipage}{0.5\textwidth}
    \centering
    \begin{tikzpicture}[scale=1.5]
        \filldraw[black] (0, 1) circle (1pt) node[above] {1};
        \filldraw[blue] (1, 1) circle (1pt) node[above] {2};
        \filldraw[black] (1.951,-0.3091) circle (1pt) node[above] {5};
        \filldraw[black] (0, 0) circle (1pt) node[below] {4};
        \filldraw[blue] (1, 0) circle (1pt) node[below] {3};
        \filldraw[black] (2.538841, 0.5) circle (1pt) node[below] {6};
        \filldraw[black] (1.951, 1.3091) circle (1pt) node[below] {7};

        \draw[thick] (0,1) -- (1,1);  
        \draw[thick] (1,0) -- (0,0);  
        \draw[thick] (0,0) -- (0,1);  
        \draw[thick] (1, 1) -- (1.951, 1.3091); 
        \draw[thick] (1, 0) -- (1.951,-0.3091);
        \draw[thick] (1.951, 1.3091) -- (2.538841, 0.5);
        \draw[thick] (1.951,-0.3091) -- (2.538841, 0.5);

        \draw[thick,blue] (1, 1) to[out=225, in=135] (1, 0);
        \draw[thick,blue] (1, 1) to[out=315, in=45] (1, 0);  
    \end{tikzpicture}
    \end{minipage}
    \caption{ We give an example of $k=4$ and $h=5$. The left figure is an example of $F^{k,h}_1$ constructed by cycle one of length 4 with $E = \{\{1,2\}, \{2,3\},\{3,4\}, \{4,1\} \}$ and cycle two of length 5 with $ E = \{ \{2,3\}, \{2,7\}, \{7,6\}, \{6,5\}, \{5,3\}\}$ having one overlapping edge $ \{2,3\}$.  The right figure is an example of $F^{k,h}_2$  constructed by cycle one of length 4 with $E = \{ \{1,2\}, \{2,3\},\{3,4\}, \{4,1\} \}$ and cycle two of length 5 with $ E = \{ \{2,3\}', \{2,7\}, \{7,6\}, \{6,5\}, \{5,3\} \}$ having two shared vertices 2 and 3. And vertices 2 and 3 are connected by two distinct edges}
    \label{fig:dense_union_graph}
\end{figure}

\begin{definition}[A cycle with one multi-edge]\label{def:multi_cycle}
    Let $K_2$ be a complete graph on two vertices, and $C_2$ be a multigraph on 2 vertices with 2 multi-edges. For $h\geq 3$, $C_h$ is a cycle of length $h$ and $C_{2,h}$ is a cycle of length $h$ with one multi-edge. See Figure~\ref{fig:cycle_mult_edge} for an example.
\end{definition}

\begin{figure} 
    \centering
    \begin{minipage}{0.5\textwidth}
    \centering
    \begin{tikzpicture}[scale=1.5]
        \filldraw[blue] (1, 1) circle (1pt) node[above] {2};
        \filldraw[black] (1.951,-0.3091) circle (1pt) node[above] {5};
        \filldraw[blue] (1, 0) circle (1pt) node[below] {3};
        \filldraw[black] (2.538841, 0.5) circle (1pt) node[below] {6};
        \filldraw[black] (1.951, 1.3091) circle (1pt) node[below] {7};

        \draw[thick, blue] (1,1) -- (1,0); 
        \draw[thick] (1, 1) -- (1.951, 1.3091); 
        \draw[thick] (1, 0) -- (1.951,-0.3091);
        \draw[thick] (1.951, 1.3091) -- (2.538841, 0.5);
        \draw[thick] (1.951,-0.3091) -- (2.538841, 0.5);
    \end{tikzpicture}
    \end{minipage}%
    \hfill 
    \begin{minipage}{0.5\textwidth}
    \centering
    \begin{tikzpicture}[scale=1.5]
        \filldraw[blue] (1, 1) circle (1pt) node[above] {2};
        \filldraw[black] (1.951,-0.3091) circle (1pt) node[above] {5};
        \filldraw[blue] (1, 0) circle (1pt) node[below] {3};
        \filldraw[black] (2.538841, 0.5) circle (1pt) node[below] {6};
        \filldraw[black] (1.951, 1.3091) circle (1pt) node[below] {7};

        \draw[thick] (1, 1) -- (1.951, 1.3091); 
        \draw[thick] (1, 0) -- (1.951,-0.3091);
        \draw[thick] (1.951, 1.3091) -- (2.538841, 0.5);
        \draw[thick] (1.951,-0.3091) -- (2.538841, 0.5);

        \draw[thick,blue] (1, 1) to[out=225, in=135] (1, 0);
        \draw[thick,blue] (1, 1) to[out=315, in=45] (1, 0);  
    \end{tikzpicture}
    \end{minipage}
    \caption{We give an example of $h=5$. The left figure is an example of $C_h$ and the right figure is an example of $C_{2,h}$. In particular, the blue part in the left figure is an example of $K_2$ and the blue part in the right figure is an example of $C_2$.}
    \label{fig:cycle_mult_edge}
\end{figure}
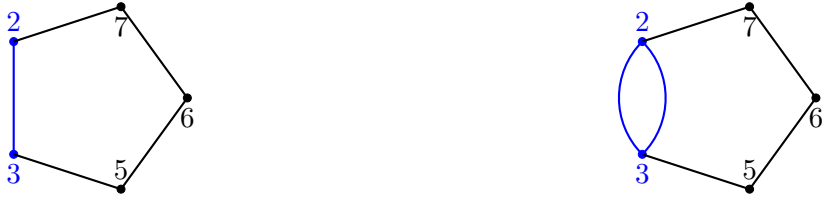

The first result is a CLT for the linear spectral statistics of $A_n$ in the dense regime, where we use Definitions~\ref{def: F-graph} and~\ref{def:multi_cycle} to describe the limiting covariance.
\begin{theorem}[CLT for $A_n$, dense regime]\label{thm:nc_CLT_inhomo_dense}
      Let $A_n$  satisfy Assumption~\ref{assumption1},   and suppose Assumption~\ref{assumption:delta1} holds. Then
  $\{L_{n,k}\}_{k\geq 2}$ converges in distribution   to a centered Gaussian process $\{L_k\}_{k\geq 2}$ with
  \begin{align}
      \mathrm{Cov}(L_k, L_h)= \begin{cases} 
    2\left( t(K_2,W)-p \cdot t(C_2,W) \right)  & k=h=2\\
       2h \left( t(C_h, W)-p\cdot t(C_{2,h},W)\right)    & k=2,h\geq 3\\
     2kh \left( t(F_1^{k,h}, W) - p \cdot t(F_2^{k,h}, W) \right)     &    k,h\geq 3
      \end{cases}.
  \end{align}
  \end{theorem}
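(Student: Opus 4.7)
The plan is to prove the theorem by the method of moments. Expanding
\[
\mathrm{tr}(A_n^k) = \sum_{\mathbf{i}=(i_1,\ldots,i_k) \in [n]^k} \prod_{\alpha=1}^{k} a_{i_\alpha i_{\alpha+1}}
\]
(with $i_{k+1}=i_1$) and using the Bernoulli identity $a_{ij}^m = a_{ij}$, each product collapses to $\prod_{e \in E(\mathbf{i})} a_e$ over the set of distinct edges visited by the closed walk $\mathbf{i}$, so $L_{n,k}$ becomes a rescaled sum of centered edge-product variables. For the covariance, I pair walks $\mathbf{i}$ of length $k$ with walks $\mathbf{j}$ of length $h$; setting $U = E(\mathbf{i}) \cup E(\mathbf{j})$ and $C = E(\mathbf{i}) \cap E(\mathbf{j})$, independence and $a_e^2 = a_e$ yield
\[
\mathrm{Cov}\Big(\prod_{e \in E(\mathbf{i})} a_e,\,\prod_{e \in E(\mathbf{j})} a_e\Big) = \prod_{e \in U}(p s_e)\,\Big(1 - \prod_{e \in C}(p s_e)\Big),
\]
so only pairs with $C \neq \emptyset$ contribute.

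The main combinatorial step is to classify contributions by the isomorphism type of the union graph $G$: each class contributes at order $n^{v(G)} p^{e(G)}$ against the overall normalization $(np^{1/2})^2/(np)^{k+h} = n^{-(k+h-2)} p^{-(k+h-1)}$ for $k,h \geq 3$, so leading order demands $v(G) = k+h-2$ and $e(G) = k+h-1$. Among pairs of closed walks of lengths $k$ and $h$, the only unions realizing these extrema are two simple cycles of length $k$ and $h$ sharing exactly one edge, i.e., $G = F_1^{k,h}$; unions with more than one shared edge or with non-simple closed walks lose at least a factor of $n^{-1}$. Expanding the factor $1 - p s_{e^*}$ on the unique shared edge $e^*$ splits the limit into two pieces: the $1$-term produces $t(F_1^{k,h}, W)$ after converting weighted sums over $s_{ij}$ to graphon integrals via Assumption~\ref{assumption:delta1} and Lemma~\ref{lem: graphon_multi}, while the $-p s_{e^*}$ correction effectively doubles the shared edge into a multigraph and yields $-p \cdot t(F_2^{k,h}, W)$. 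The prefactor $2kh$ arises from counting the closed-walk parametrizations of the two cycles modulo the automorphisms of $F_1^{k,h}$. The cases involving $k=2$ require a mild adjustment: since $\mathrm{tr}(A_n^2) = 2\sum_{i<j} a_{ij}$ is a sum of independent Bernoullis, the extra $p^{1/2}$ in the normalization of $L_{n,2}$ exactly matches the Bernoulli variance $p s_{ij}(1 - p s_{ij})$ and produces the $t(K_2,W) - p \cdot t(C_2, W)$ formula; for $\mathrm{Cov}(L_{n,2}, L_{n,h})$ the length-$2$ walk traverses a single edge, which must lie on the $h$-cycle of $\mathbf{j}$, so the union is $C_h$ and the shared-edge expansion gives the $-p \cdot t(C_{2,h}, W)$ correction.

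Finally, to upgrade the second-moment calculation to joint convergence of the Gaussian process $\{L_k\}_{k \geq 2}$, I would compute higher joint moments and verify Wick's formula: the leading contribution to an order-$m$ mixed moment comes from tuples of walks that decompose into $m/2$ pairs, each a pair of cycles sharing one edge, while all other interaction patterns (three or more walks sharing a common edge, multiple shared edges, chains of overlaps, or non-simple walks) are subleading by factors of $n^{-1}$ and vanish in the limit. The main obstacle I anticipate is this combinatorial bookkeeping—enumerating and bounding the many union-graph topologies that arise from $m$-tuples of walks uniformly in the variance profile $S_n$—together with justifying the passage from discrete sums $\sum_\mathbf{i} \prod s_{i_\alpha i_{\alpha+1}}$ to graphon integrals $\int \prod W$ on multigraphs such as $F_2^{k,h}$ and $C_{2,h}$. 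The latter is precisely where the stronger $\delta_1$-convergence in Assumption~\ref{assumption:delta1}, combined with Lemma~\ref{lem: graphon_multi}, becomes essential, explaining why mere cut-metric convergence does not suffice in the dense regime.
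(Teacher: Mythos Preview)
Your proposal is correct and follows essentially the same approach as the paper: a moment-method expansion of $\mathrm{tr}(A_n^k)$ as a sum over closed walks, classification of the covariance contributions by the union graph $G_{I_k\cup J_h}$ with the scaling $n^{v}p^{e}$ against the normalization, identification of the leading structure for $k,h\geq 3$ as two cycles sharing a single edge (the paper's Case~3, $v=e-1=k+h-2$), the $1-ps_{e^*}$ split producing $t(F_1^{k,h},W)-p\,t(F_2^{k,h},W)$ via $\delta_1$-convergence and Lemma~\ref{lem: graphon_multi}, and a dependency-graph/Wick argument for the higher joint moments. Your explicit use of the Bernoulli identity $a_{ij}^m=a_{ij}$ and the closed-form covariance $\prod_{e\in U}(ps_e)\big(1-\prod_{e\in C}(ps_e)\big)$ is a clean way to package what the paper computes more implicitly, but the underlying combinatorics and limiting identifications are identical.
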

Compared to Theorem~\ref{thm: centered_CLT_inhomo_dense}, the covariance structure in Theorem~\ref{thm:nc_CLT_inhomo_dense}
is defined by cycles or a union of cycles, and the scaling factor differs.

Now we turn to the sparse regime where $np\to\infty, p\to 0$. Surprisingly, there are infinitely many phase transitions for $p$ decreasing at different rates. In addition, a change of scaling appears from $np=n^{\Omega(1)}$ to $np=n^{o(1)}$. To the best of our knowledge, this is a new phase transition for linear spectral statistics in the literature for sparse random matrices. 
When $np=n^{o(1)}$, define the rescaled  statistics
\begin{align}\label{eq:newlimit}
\widetilde{L}_{n,k} = \sqrt{p} \left( \mathrm{tr}\left(\frac{A_n}{\sqrt{np}}\right)^k - \mathbb{E}\mathrm{tr}\left(\frac{A_n}{\sqrt{np}}\right)^k \right).
\end{align}

  \begin{theorem}[CLT for $A_n$, sparse  regime]\label{thm:nc_CLT_inhomo_sparse}
  Let $A_n$ satisfy Assumption~\ref{assumption1}.
  \begin{enumerate}
      \item   Assume $np\gg n^{1/2}$ and $p\to 0$.
 Suppose Assumption~\ref{assumption:deltaBox} holds, then
  $\{L_{n,k}\}_{k\geq 2}$ converges to a centered Gaussian process $\{L_k\}_{k\geq 2}$ with
  \begin{align}\label{equ: dense_nc_cov_thm}
      \mathrm{Cov}(L_k, L_h)= \begin{cases} 
    2 \cdot t(K_2,W)  & k=h=2\\
       2h \cdot  t(C_h, W)    & k=2,h\geq 3\\
     2kh \cdot t(F_1^{k,h}, W)      &    k,h\geq 3
      \end{cases}, 
  \end{align}
  where $F_1^{k,h}, K_2, C_{h}$ are defined in Definitions~\ref{def: F-graph} and~\ref{def:multi_cycle}.
  \item Assume $n^{1/2}p\to c$ for some $c\in (0,\infty)$.  Suppose Assumption~\ref{assumption:deltaBox} holds, then
  $\{L_{n,k}\}_{k\geq 2}$ converges to a centered Gaussian process $\{L_k\}_{k\geq 2}$ with
  \begin{align}
      \mathrm{Cov}(L_{k}, L_{h})= \begin{cases} 
    2 \cdot t(K_2,W) & k=h=2\\[4pt]
       2h \cdot  t(C_h, W)   & k=2,h= 3 \text{ or } h \geq 5\\[4pt]
     8 \cdot  t(C_4, W) + 4 c^{-2} \sum_{T\in \mathcal T_2} t(T, W)  & k=2,h = 4 \\[4pt]
       6 c^{-2} \cdot t(C_3,W) + 18 \cdot t(F_1^{3,3}, W)  &    k=h = 3 \\[4pt] 
    c^{-4}\sum_{T \in \mathcal  T^{4,4}_{1}}  t(T,W) + 32 \cdot t(F_1^{4,4}, W)  &    k=h = 4 \\[4pt]
       2kh \cdot t(F_1^{k,h}, W)   &  \text{otherwise} 
      \end{cases}.
  \end{align}
  \item Assume $n^{1/m}\ll np \ll n^{1/{(m-1)}}$ for an integer $m\geq 3$.  Suppose Assumption~\ref{assumption:deltaBox} holds, then
  $\{L_{n,k}\}$ for $k=2$ and $k\geq 2m-1$ converges to a centered Gaussian process $\{L_k\}$ such that 
  \begin{align}
  \mathrm{Cov}(L_k, L_h)= \begin{cases} 
    2 \cdot t(K_2,W)  & k=h=2\\
       2h \cdot  t(C_h, W)    & k=2,h\geq 2m-1\\
     2kh \cdot t(F_1^{k,h}, W)    &    k,h\geq 2m-1
      \end{cases}.
      \end{align}
  \item Assume $ n^{1-\frac{1}{m}}p\to c$ for some constant $c\in (0,\infty)$.  Suppose Assumption~\ref{assumption:deltaBox} holds, then
  $\{L_{n,k}\}$ with $k=2$ and $k\geq 2m-1$ converges to a centered Gaussian process $\{L_k\}$ such that
  \begin{small}
      \begin{align}
    \mathrm{Cov}(L_{k}, L_{h})= \begin{cases} 
    2 \cdot t(K_2,W)  & k=h=2 \\[4pt]
    2h \cdot  t(C_h, W)    & k=2,h =  2m-1 \text{ or } h \geq 2m+1 \\[4pt]
2m \cdot c^{-m} \sum_{T\in \mathcal T_{m}}t(T,W)+ 4m \cdot  t(C_{2m}, W)   & k=2, h = 2m\\[4pt]
       c^{-2m} \sum_{T \in \mathcal  T^{2m,2m}_{1}}  t(T,W) +8m^2 \cdot t(F_1^{2m,2m}, W)  & k = h = 2m \\[4pt]
     2kh \cdot t(F_1^{k,h}, W)    &   \text{otherwise} 
      \end{cases}.
\end{align}
  \end{small}

  \item      Assume $np=n^{o(1)}$ and $np\to\infty$. Suppose Assumption~\ref{assumption:tree} holds, then
  $\{\widetilde{L}_{n,k}\}_{k\geq 2}$ converges to a centered Gaussian process $\{L_k\}_{k\geq 2}$ with
 the same covariance as in \eqref{eq:Cov_CLT_sparse_centered}.
  \end{enumerate}
  \end{theorem}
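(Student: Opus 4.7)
The plan is to apply the method of moments: for each finite collection $k_1, \ldots, k_r \geq 2$, I would show that the joint moment $\mathbb{E}[L_{n,k_1} \cdots L_{n,k_r}]$ (respectively $\mathbb{E}[\widetilde L_{n,k_1} \cdots \widetilde L_{n,k_r}]$ in part (5)) converges to the Gaussian moment predicted by Wick's formula applied to the stated covariance. The fundamental tool is the trace-walk expansion
\begin{equation*}
\mathrm{tr}(A_n^k) = \sum_{(i_1, \ldots, i_k) \in [n]^k} a_{i_1 i_2} a_{i_2 i_3} \cdots a_{i_k i_1},
\end{equation*}
viewed as a sum over closed walks of length $k$ in $K_n$. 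After subtracting $\mathbb{E}\mathrm{tr}(A_n^k)$, I would split each entry as $a_e = \overline a_e + p s_e$ and expand, keeping track of which terms contain at least one centered factor.

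For $\mathrm{Cov}(L_{n,k}, L_{n,h})$, one enumerates pairs of closed walks of lengths $k, h$ whose edge overlap is nonempty and groups them by the isomorphism type of the underlying (multi)graph. The central conceptual difference from the centered case in Theorem~\ref{thm: centered_CLT_inhomo_sparse1}, where each edge must appear an even total number of times and hence trees dominate, is that for $A_n$ a walk whose edges all appear exactly once traces out a cycle. Hence the dominant overlap shape in parts (1)--(4) is two cycles glued along one oriented edge, giving $F_1^{k,h}$, with the factor $2kh$ arising from choices of the shared edge and its orientation in each cycle; the special cases $k=2$ or $h=2$ reduce to a single cycle or a single multi-edge, accounting for $t(K_2, W)$ and $t(C_h, W)$. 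In the dense regime the multigraph shape $F_2^{k,h}$ (two cycles sharing a pair of parallel edges) still contributes with coefficient $-p$, forcing the stronger Assumption~\ref{assumption:delta1} together with Lemma~\ref{lem: graphon_multi}; once $p \to 0$, this multigraph shape vanishes and Assumption~\ref{assumption:deltaBox} combined with Lemma~\ref{graphonconvergence} suffices to identify each Riemann sum $n^{-v(G)} \sum_\phi \prod_{e} s_{\phi(e)}$ with the homomorphism density $t(G, W)$.

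The phase transitions in parts (2)--(4) come from an edge/vertex balance: a shape $G$ contributes at order $n^{v(G)} p^{e(G)}$ relative to the fixed normalization built into $L_{n,k} L_{n,h}$. Unicyclic shapes satisfy $v = e$ and give the baseline cycle contribution; as $np$ drops through the threshold $n^{1/m}$, tree extensions by $m$ additional edges (so $v - e = 1$) become order-comparable to the pure cycle contribution, producing the additional terms involving $\mathcal{T}_m$ and $\mathcal{T}^{2m,2m}_1$ that appear in (2) and (4). The specific $(k,h)$ pairs listed there are precisely those for which an $m$-edge tree extension has the correct length to be absorbed by the walks, and the factor $c^{-m}$ or $c^{-2m}$ reflects the scaling $np \sim c \cdot n^{1/m}$. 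For part (5) I would write $A_n = \overline A_n + p S_n$ and expand $A_n^k$ binomially; under the rescaling by $\sqrt p / \sqrt{np}$, every cross term containing a factor of $p S_n$ is of lower order, so $\widetilde L_{n,k} - X_{n,k} \to 0$ in probability and the limit inherits the covariance of Theorem~\ref{thm: centered_CLT_inhomo_sparse1}. Gaussianity of all finite-dimensional marginals follows from controlling higher cumulants: for $r \geq 3$, any connected overlap pattern of $r$ walks carries a strictly negative power of $n$ relative to the covariance order (essentially from the Euler characteristic of the overlap graph), so these cumulants vanish. The main obstacle is the delicate case analysis at the intermediate sparsity thresholds $np \sim n^{1/m}$: one must identify exactly which shape-with-extensions survives at each threshold and carefully rule out shapes with extra cycles, mismatched tree tails, or other non-leading overlap structures --- this enumerative bookkeeping is what I expect to form the combinatorial heart of the argument.
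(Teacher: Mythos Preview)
Your overall strategy---method of moments via trace-walk expansion, classification of union graphs by isomorphism type, identification of $F_1^{k,h}$ as the leading shape for $k,h\geq 3$, and a perfect-matching argument for higher joint moments---matches the paper's approach. Two implementation points differ from what the paper actually does, and are worth flagging.

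First, the paper never splits $a_e = \overline a_e + p s_e$. It works directly with $\mathrm{Cov}(a_{I_k},a_{J_h}) = \mathbb{E}[a_{I_k}a_{J_h}] - \mathbb{E}[a_{I_k}]\mathbb{E}[a_{J_h}]$, uses independence to force the union graph $G_{I_k\cup J_h}$ to be connected, and then bounds the total contribution of a given $(v,e)$-type by $\frac{n^2 p^{1+\mathbf{1}\{k=2\}+\mathbf{1}\{h=2\}}}{(np)^{k+h}}\,n^v p^e$. The entire case analysis is then organized by the single parameter $v-e\in\{-1,0,1\}$ (bicyclic / unicyclic / tree), and the phase transitions in parts (2)--(4) fall out by checking, in each sparsity window, which of these three cases survives for which $(k,h)$. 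This is cleaner than tracking which expanded terms carry a centered factor, and it makes the ``tree extension'' phenomenon you describe more transparent: the extra tree contributions at the thresholds are not extensions of the cycle shape but rather the $v=e+1$ case (pure trees in $\mathcal T_1^{k,h}$ or $\mathcal T_m$) becoming comparable to the $v=e-1$ cycle case precisely when $(np)^m \asymp n$.

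Second, for part (5) the paper does \emph{not} reduce to the centered case by showing $\widetilde L_{n,k}-X_{n,k}\to 0$. It repeats the direct covariance analysis for $A_n$ under the scaling \eqref{eq:newlimit} and finds that, when $np=n^{o(1)}$, only the $v=e+1$ (tree) case survives---exactly as for $\overline A_n$---so the limits coincide. Your binomial-expansion route is plausible and would give the same conclusion, but you would still need to control the variance of each cross term $\mathrm{tr}(\overline A_n^{j_1}(\mathbb{E}A_n)\overline A_n^{j_2}\cdots)$, which is essentially the same walk-counting exercise; the paper's direct route avoids this detour.
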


\begin{remark}[Phase transitions at $m\geq 3, m\in \mathbb Z$]\label{rmk:scaling}
    In Parts (3) and (4) of Theorem~\ref{thm:nc_CLT_inhomo_sparse}, we do not include the case for $3\leq k\leq 2m-2$. This is because in the proof of Theorem~\ref{thm:nc_CLT_inhomo_sparse}, one can see that under the conditions in Parts (3) and (4) Theorem~\ref{thm:nc_CLT_inhomo_sparse} when $n^{1/m}\ll np \ll n^{1/{(m-1)}}$, $\mathrm{Var}(L_{n,k})\to\infty$   at different scales for different  $3\leq k\leq 2m-2$. However, for all fixed $k\geq 2m-1$, $\mathrm{Var}(L_{n,k})=O(1)$ and we can compute the limiting covariance of the Gaussian process precisely. Therefore, besides the special case $k=2$ as explained in Remark \ref{rmk:k=2}, $k\geq 2m-1$ is the necessary and sufficient conditions to obtain a CLT in the regime when  $n^{1/m}\ll np \ll n^{1/{(m-1)}}$ for $L_{n,k}$.
\end{remark}

Finally, for the bounded expected regime, the same CLT  for $\overline{A}_n$ also holds for $A_n$:
\begin{theorem}[CLT for $A_n$, bounded expected degree regime]\label{thm:nc_CLT_inhomo_bounded}
Assume $np\to  c\in (0,\infty)$ as $n \to \infty$.
Suppose for any finite tree T, $t(T, W_n)\to \beta_T$ as $n \rightarrow \infty$. Then
  $\{\widetilde{L}_{n,k}\}_{k\geq 2}$ converges to a centered Gaussian process $\{L_k\}_{k\geq 2}$ with the same covariance as in \eqref{equ: bounded_nc_cov_thm}.
\end{theorem}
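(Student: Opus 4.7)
The plan is to reduce Theorem~\ref{thm:nc_CLT_inhomo_bounded} to the centered-matrix CLT, Theorem~\ref{thm: inhomo_bounded_c}, by showing that for each fixed $k \geq 2$,
\begin{equation}
\widetilde{L}_{n,k} - X_{n,k} \to 0 \quad \text{in } L^2,
\end{equation}
where $X_{n,k}$ is the centered linear spectral statistic of $\overline{A}_n$ from \eqref{eq:Xnk}. Since both statistics share the scaling $\sqrt{p}\,(np)^{-k/2}$, this $L^2$ closeness, combined with Theorem~\ref{thm: inhomo_bounded_c} and Slutsky's theorem applied coordinatewise, yields the desired joint CLT for $\{\widetilde{L}_{n,k}\}_{k\geq 2}$ with the covariance in \eqref{equ: bounded_nc_cov_thm}.

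To isolate the discrepancy, write $A_n = \overline{A}_n + P$ with $P := \mathbb{E} A_n$ and expand
\begin{equation}
\mathrm{tr}(A_n^k) = \sum_{S \subseteq [k]} R_S, \qquad R_S := \mathrm{tr}\!\left( \prod_{j=1}^{k} B_j^{(S)} \right),
\end{equation}
where $B_j^{(S)} = P$ if $j \in S$ and $B_j^{(S)} = \overline{A}_n$ otherwise. The $S = \emptyset$ term equals $\mathrm{tr}(\overline{A}_n^k)$, so
\begin{equation}
\widetilde{L}_{n,k} - X_{n,k} = \frac{\sqrt{p}}{(np)^{k/2}} \sum_{S \neq \emptyset} \bigl(R_S - \mathbb{E} R_S\bigr).
\end{equation}
It therefore suffices to prove $\mathrm{Var}(R_S) = o\bigl((np)^{k}/p\bigr) = o(n)$ for every non-empty $S \subseteq [k]$ in the regime $np \to c$.

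For fixed $S$ with $|S| = s \geq 1$, the main step is a walk expansion: $R_S$ becomes a sum over closed walks $w = (i_1,\ldots,i_k,i_1)$ on $[n]$, where positions indexed by $S$ contribute deterministic coefficients $p s_{i_j i_{j+1}} = O(1/n)$, while positions outside $S$ contribute centered Bernoulli factors $\overline{a}_{i_j i_{j+1}}$. In the double sum expressing $\mathrm{Var}(R_S)$ over pairs $(w,w')$, the $\overline{a}$-monomials have nonzero covariance only when every distinct edge in the combined $\overline{a}$-multiset appears an even number of times, and the two walks must be connected through shared $\overline{a}$-edges (otherwise the contribution is absorbed into $(\mathbb{E}R_S)^2$). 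Since both walks are closed, the two endpoints of any $P$-position are automatically revisited by adjacent $\overline{a}$-positions within the same walk, forcing each $P$-edge to be a chord of the $\overline{a}$-sub-walk rather than introducing a free vertex. A walk-enumeration argument analogous to the one driving the proof of Theorem~\ref{thm: inhomo_bounded_c} then bounds the number of distinct vertices in any contributing pair by $k$, yielding
\begin{equation}
\mathrm{Var}(R_S) \lesssim n^{k} \cdot p^{2s} \cdot p^{k-s} = O(n^{-s}) = o(n),
\end{equation}
which is the required variance bound.

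The main obstacle is the vertex-counting step for walk pairs with mixed deterministic and random positions, especially for $|S| = 1$: the lone $P$-edge could a priori add a free vertex, and one must confirm that its endpoints are pinned to the $\overline{a}$-sub-walk by the closed-walk constraint. The remainder of the combinatorics parallels the walk-pair enumeration already carried out for $\overline{A}_n$ in Theorem~\ref{thm: inhomo_bounded_c}, with each element of $S$ merely contributing an additional $O(1/n)$ penalty that absorbs any vertex-counting slack. Once the uniform variance bound is secured across non-empty $S$, the $L^2$ reduction is complete and Theorem~\ref{thm: inhomo_bounded_c} delivers the stated CLT for $\widetilde{L}_{n,k}$.
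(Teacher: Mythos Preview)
Your reduction strategy---show $\widetilde L_{n,k}-X_{n,k}\to 0$ in $L^2$ and then invoke Theorem~\ref{thm: inhomo_bounded_c} via multivariate Slutsky---is a genuinely different route from the paper.  The paper instead repeats the direct combinatorial analysis of Section~4.3 with $a_{I_k}$ in place of $\overline a_{I_k}$: one expands $\mathrm{Cov}(\widetilde L_{n,k},\widetilde L_{n,h})$, observes that for $np\to c$ the only union graphs $G_{I_k\cup J_h}$ surviving the $n^v p^e$ count satisfy $v=e+1$ (trees), and then identifies the same tree-gluing covariance as in the centered case.  Your approach is conceptually cleaner once the variance bound is in hand; the paper's is more uniform with its other proofs and avoids the mixed-walk combinatorics entirely.

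However, the combinatorial core of your sketch is not correct as stated.  The assertion that ``each $P$-edge is a chord of the $\overline a$-sub-walk'' fails whenever $S$ contains consecutive positions: for $k=4$, $S=\{1,2\}$, the vertex $i_2$ is touched only by $P$-edges and is free.  In that example a contributing pair $(w,w')$ with the two $\overline a$-paths identified has five distinct vertices, so your bound $v\le k$ is violated, and the displayed estimate $\mathrm{Var}(R_S)\lesssim n^k p^{2s}p^{k-s}=O(n^{-s})$ does not follow.  The approach is salvageable, but you need a different count: partition the edges of the union graph by their number $n_P$ of $P$-traversals.  If the union graph is a tree, the closed-walk parity forces every edge with $n_P=1$ to also carry an $\overline a$-traversal, hence to lie in $E_A$; combined with $|\{n_P\ge 2\}|\le s$ this gives $e\le e_A+s$ and thus $v\le e_A+s+1$, yielding $n^v p^{2s+e_A}=O(n^{1-s})$.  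If the union graph is not a tree, the cruder bound $v\le e\le e_A+2s$ already gives $(np)^{e_A+2s}=O(1)$.  Either way $\mathrm{Var}(R_S)=o(n)$ and your reduction goes through, but the argument you wrote does not establish it.
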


\begin{remark}[Comparison with CLTs for $\overline{A}_n$ in Section~\ref{sec:centered}]
    We see from Theorem~\ref{thm:nc_CLT_inhomo_dense} and Parts (1)-(4) in Theroem~\ref{thm:nc_CLT_inhomo_sparse}, when $np=n^{\Omega(1)}$, the CLTs for linear spectral statistics are different between $A_n$ and $\overline{A}_n$. However, when $np=n^{o(1)}$, Part (5) in Theorem~\ref{thm:nc_CLT_inhomo_sparse} and \ref{thm:nc_CLT_inhomo_bounded} show that same limiting Gaussian process holds for both $A_n$ and $\overline{A}_n$, revealing a phase transition in the centering effect on linear spectral statistics.
\end{remark}

\begin{remark}[Comparison with \cite{diaconu2022two}]
To the best of our knowledge, the only CLT for the (non-centered) adjacency matrix is given in \cite{diaconu2022two} for the homogeneous Erd\H{o}s-R\'{e}nyi graph $G(n,p)$. \cite[Theorem 1]{diaconu2022two} showed that if $np(1-p)\to\infty$, for $\frac{\log n}{c_0\log(np(1-p))}\leq k\leq c_0 (np(1-p)))^{1/4}$ and $k$ is even, the following convergence in distribution holds:
\begin{align}\label{eq:diaconu}
    \frac{\mathrm{tr}(A_n^{k})-\mathbb E[\mathrm{tr}(A_n^{k})]}{k(np)^{k-1}\sqrt{p(1-p)}}\to N(0,2).
\end{align}
In addition, Equation \eqref{eq:diaconu} is used as an ingredient to study the fluctuation of the largest eigenvalue of $A_n$ in \cite{diaconu2022two}.
We compare our Theorems~\ref{thm:nc_CLT_inhomo_dense}, \ref{thm:nc_CLT_inhomo_sparse}, \ref{thm:nc_CLT_inhomo_bounded}  with \cite{diaconu2022two} under the $G(n,p)$ model. In the dense case, Theorem~\ref{thm:nc_CLT_inhomo_dense} recovers \eqref{eq:diaconu} for $k\geq 3$ and $k$ is fixed, while \cite{diaconu2022two} requires $k\geq C_0$  and $k$ is even for some constant $C_0>0$ but allows $k$ grows with $n$. In the sparse regime where $np\gg n^{1/m}$ or $np=cn^{1/m}$ for $m\geq 2$, Theorem~\ref{thm:nc_CLT_inhomo_sparse} obtain \eqref{eq:diaconu} for $k\geq 2m-1$, while \cite{diaconu2022two} requires $k\geq C_0 m$ and $k$ is even. When $np=n^{o(1)}$, \cite[Theorem 1]{diaconu2022two} holds for $k=\omega(\log n)$ and $k$ is even, while our CLT in part (5) of Theorem~\ref{thm:nc_CLT_inhomo_sparse} holds for fixed $k\geq 2$ under a different scaling \eqref{eq:newlimit}. This shows the difference between fixed $k$ and growing $k$ in this regime.
When $np\to c$, our Theorem~\ref{thm:nc_CLT_inhomo_bounded} gives a CLT under the different scaling \eqref{eq:newlimit} while the result in \cite{diaconu2022two} does not apply.
\end{remark}

\section{Proof for the centered adjacency matrices}\label{sec:proof_centered}

\subsection{Proof of Theorem~\ref{thm: centered_CLT_inhomo_dense}}

The proof consists of two steps.  In this proof, we will also introduce several definitions that will be repeatedly used in other theorems.
Let $S_n'=(s'_{ij})$ be another variance profile matrix related to $S_n$, given by 
    $s'_{ij}=s_{ij} (1-ps_{ij})$.
We associate a graphon $W'_n$ to the matrix $S'_n$.

  \begin{definition}[$k$-tuplet]\label{def: k-tuplet_of_numbers1}
    We define a \( k \)-tuplet  as \( I_k = (i_1, i_2, \dots, i_k) \), where each \( i_j \) is an element of the set \( \{1, 2, \dots, n\} \). We define $\overline{a}_{I_k}=\overline{a}_{i_1 i_2} \overline{a}_{i_2 i_3} \dots \overline{a}_{i_k i_1}$.
\end{definition}

 \noindent     \textbf{Step 1: Computing the limiting covariance.}\label{step: step1_dense_center} 
  We start with an asymptotic analysis of the covariance of $X_{n,k}, k\geq 2$ such that  \begin{align}\label{equ: dense_c_cov1}
        \mathrm{Cov}( X_{n,k}, X_{n,h}) &= \frac{p}{(np)^{(k+h)/2}} \sum_{I_k,J_h} \mathrm{Cov}(\overline{a}_{I_k}, \overline{a}_{J_h})\\
        &= \frac{p}{(np)^{(k+h)/2}} \sum_{I_k,J_h} \mathbb{E}[\overline{a}_{I_k} \overline{a}_{J_h} ] - \mathbb{E}[\overline{a}_{I_k}  ] \mathbb{E} [\overline{a}_{J_h} ],
    \end{align}
    where the sum is over all $k$-tuplet $i_1,\dots,i_k$ and $h$-tuplet $j_1,\dots,j_h$.

\begin{definition}[Union graph]\label{def: union_graph}
    To each pair of $\overline{a}_{I_k}, \overline{a}_{J_h}$,  we associate a graph \( G_{I_k} \) to \( a_{I_k} \), constructed over the set of distinct indices  appearing in $I_k$ and edges formed by the unordered pairs $\{i_j,i_{j+1}\}$. Then $a_{I_k}$ is a closed walk on $G_{I_k}$ through all vertices. Similarly, We construct $ G_{J_h}$ associated to \( a_{J_h} \). We denote the union graph of $G_{I_k}$ and $ G_{J_h}$ by $G_{I_k\cup J_h}$.
\end{definition}

Then to find the asymptotic value of \eqref{equ: dense_c_cov1}, it suffices to estimate the contribution from all possible union graphs $G_{I_k\cup J_h}$.
    For a given $G_{I_k\cup J_h}$, since $p$ is a constant and with Assumption \ref{assump: dense_1}, when $ v < (k+h)/2$, we have the contribution from all $G_{I_k\cup J_h}$ with a given vertex number $v$ and edge number $e$ is bounded by \begin{align}
        \frac{p}{(np)^{(k+h)/2}} \cdot n^v \left(Cp\right)^e = O( n^{v-(k+h)/2}) = o(1).
    \end{align}
    Therefore, it remains to consider all union graphs with \begin{align}\label{equ: dense_c_cov_ine_11}
        v-(k+h)/2 \geq 0.
    \end{align}

    Since each entry in $A_n$ is independent, to have a nonzero $\mathrm{Cov}(\overline{a}_{I_k}, \overline{a}_{J_h})$,  $G_{I_k \cup J_h}$ must be  connected. Also, by Assumption \ref{assumption1} that $a_{ii}=0, i\in [n]$, $G_{I_k \cup J_h}$ cannot contain any loop. Let $e_{I_k}, e_{J_h}$ be the number of distinct edges in $G_{I_k},G_{j_h}$, respectively. 
These restrictions lead to 
  \begin{align}
(k+h)-1\geq  e_{I_k} + e_{J_h} -1 \geq e.
 \end{align}

In addition, since each entry is centered, to have a nontrivial $\mathrm{Cov}(\overline{a}_{I_k}, \overline{a}_{J_h})$ in the limit, each edge in  \( G_{I_k \cup J_h} \) must be visited at least twice in the walk defined by \( I_k \) and \( J_h \). These conditions lead to \begin{align}\label{equ: dense_c_cov_ine_22}
    v - 1 \leq e \leq (k + h)/2.
\end{align} 

We first consider when $k+h$ is odd. Since $e$ and $v$ are integers, \eqref{equ: dense_c_cov_ine_11} and \eqref{equ: dense_c_cov_ine_22} together can be further restricted to \begin{align}\label{equ: dense_c_kh_odd_cond}
    (k+h)/2 - \frac{1}{2}  \leq v-1 \leq e \leq (k+h)/2 - \frac{1}{2}.
\end{align}
This implies the only case we have is $ v-1 = e = (k+h)/2 - 1/2 $ and $G_{I_k \cup J_h} $ is a tree. In the closed walk defined by $I_k \cup J_h $,  each edge is walked twice, except one is walked three times. This scenario is not feasible since an odd number of walking times is not possible in a closed walk on a rooted tree. Then, when $k+h$ is odd, the limiting covariance is 0. 

Thus, we only need to consider when $k+h$ is even. With \eqref{equ: dense_c_cov_ine_11} and \eqref{equ: dense_c_cov_ine_22},  the union graphs $G_{I_k \cup J_h}$ that contribute to the limiting variance must satisfy  \begin{align}
 e \leq v \leq e+1.
 \end{align}  
Therefore $v=e$ or $e+1$. We then consider 2 cases below.

\smallskip 
 \textbf{Case 1: \( v = e+1 \).}\label{case: dense_center_case1}
 In this case, $G_{I_k \cup J_h} $ is a tree formed by the union of two smaller tres, $G_{I_k}$ and $G_{J_h}$ with overlapping edges, which implies $  e_{I_k} + e_{J_h} -1 \geq e$. At the same time,  each \( \overline{a}_{I_k}, \overline{a}_{J_h} \) represents a closed walk on a tree that requires every edge to be visited at least twice, which leads to $e_{I_k}\leq \frac{k}{2},  e_{J_h}\leq \frac{h}{2}$, and $k,h$ are even. Together with \eqref{equ: dense_c_cov_ine_11}, we have \begin{align}\label{equ: dense_c_case1}
    (k+h)/2-1 \geq  e_{I_k} + e_{J_h} -1 \geq e \geq (k+h)/2-1.
 \end{align}
  Then it implies $k,h$ are even and $ e = v-1 = (k+h)/2 -1 $. Thus, the union is a planar tree formed by two rooted planar trees with an overlapping edge, which belongs to $ \mathcal T^{k,h}_1$ defined in Definition~\ref{def: graph_tree}.

\textbf{Case 2: \(  v =e \).}
In this case, $G_{I_k \cup J_h}$ is a graph with  one cycle. From  \eqref{equ: dense_c_cov_ine_11}, we have $ e \geq (k+h)/2$. From  \eqref{equ: dense_c_cov_ine_22}, we have $ e \leq (k+h)/2$. Hence, $ v = e = (k+h)/2$, which implies each edge in the union graph is visited exactly twice by $I_k,J_h$, and it holds only when $k,h \geq 3$ since a cycle has length at least $3$. In this case, both $G_{I_k}$ and $G_{J_h}$ are a rooted planar trees attached with one cycle, where edges on the tree are visited twice and the edge on the cycle is visited once. 
Therefore, the union graph belongs to $\mathcal{TC}^{k,h}$ in Definition~\ref{def: TC}.



Having discussed all possible choices of $v,e$ for the union graphs, we are ready to compute the limiting covariance when $k+h$ is even.

(i) $k, h \geq 3$ are odd. The only nonzero contribution in the limit comes from Case 2. 
Since the cycle will only be visited once in each $G_{I_k}$ and $G_{J_h}$, we have $\mathbb{E}[\overline{a}_{I_k}  ] = \mathbb{E} [\overline{a}_{J_h} ] = 0$. Then  the covariance given in \eqref{equ: dense_c_cov1} can be simplified as 
\begin{align}\label{equ: dense_c_cov_rootree}
     \mathrm{Cov}( X_{n,k}, X_{n,h})
      &= \frac{p}{(np)^{(k+h)/2}}  \sum_{G \in \mathcal{TC}^{k,h}} \sum_{l = (l_1, l_2, \dots, l_{(k+h)/2})}  \mathbb{E} \left[ \prod_{uv \in E({G})} \overline{a}_{l_u l_v}^2  \right]+o(1) \\
      &= \frac{p}{n^{(k+h)/2}}  \sum_{G \in \mathcal{TC}^{k,h}} \sum_{ l_1, \dots, l_{(k+h)/2}\in [n]}  \prod_{uv \in E(G)}   s_{l_u l_v} (1 - p s_{l_u l_v}) +o(1)\\
      &=p\sum_{G\in \mathcal{TC}^{k,h}} t(G, W_n') + o(1) \label{equ: covar_d_c_1_O}.
\end{align}
 Then,  if  $\delta_{\Box}(W_n', W') \rightarrow 0$ for some graphon $W$ as $n \rightarrow \infty$, by Lemma \ref{graphonconvergence}, we have
\begin{align}\label{equ: dense_c_var_pfcol}
      \mathrm{Cov}( X_{n,k}, X_{n,h}) 
      =  p \sum_{G\in \mathcal{TC}^{k,h}} t(G, W') + o(1).
 \end{align}

(ii) $k, h \geq 2$ are even. Then the nonzero contribution in the limit comes from Case 1 and Case 2. Case 2 gives the contribution in \eqref{equ: dense_c_var_pfcol}. Then, we only have Case 1 left to calculate. We have all possible union graphs $ G_{I_k \cup J_h} $ defined in Case 1 form $\mathcal T^{k,h}_1 $, and their contribution is given by
\begin{align}\label{equ: dense_c_cov_rootree1}
     & \frac{p}{(np)^{(k+h)/2}}  \sum_{T \in \mathcal T^{k,h}_2}  \sum_{l = (l_1, l_2, \dots, l_{(k+h)/2})} \mathbb{E} \bigg[ \overline{a}_{l_{u'} l_{v'}}^4 \prod_{uv \in E(T) \setminus \{{u' v'}\}} \overline{a}_{l_u l_v}^2    \bigg]  - \mathbb{E} \bigg[  \prod_{uv \in E({T)}  }   \overline{a}_{l_u l_v}^2 \bigg].  \\
\end{align}
where $u' v'$ is the additional edge of a multigraph in $\mathcal T^{k,h}_2$ that is visited 4 times. As $a_{ij} \sim \text{Ber}(p s_{ij})$ defined in Assumption~\ref{assumption1}, we have  \begin{equation}
        \mathbb{E}[  \overline{a}_{uv}^2] = p s_{uv} (1 - p s_{uv}), \quad \mathbb{E}[  \overline{a}_{uv}^4] = p s_{uv} (1 - p s_{uv}) \big(1 - 3p s_{uv} (1 - p s_{uv}) \big).
    \end{equation}
By plugging the moments and denote $   \sum_{l = (l_1, l_2, \dots, l_{(k+h)/2})} = \sum_{l}$ for simplicity, we can simplify \eqref{equ: dense_c_cov_rootree1} as 
\begin{small}
     \begin{align}
     &\frac{p}{(np)^{(k+h)/2}} \sum_{T \in \mathcal T^{k,h}_{1}}  \sum_{l} \left(  \left(1 - 3p s_{l_{u'} l_{v'}} (1 - p s_{l_{u'} l_{v'}}) \right) \prod_{uv \in E(T)} p s_{l_u l_v} (1 - p s_{l_u l_v})    \right) \\
        &\quad -  \left( p s_{l_{u'} l_{v'}} (1 - p s_{l_{u'} l_{v'}})  \prod_{uv \in E(T)} p s_{l_u l_v} (1 - p s_{l_u l_v})    \right)   \\
        &=\frac{p}{(np)^{(k+h)/2}} \sum_{T \in \mathcal T^{k,h}_{1}}  \sum_{l} \left( \prod_{uv \in E({T})} p s_{l_u l_v} (1 - p s_{l_u l_v}) \right)  \left(1 - 4p s_{l_{u'} l_{v'}} (1 - p s_{l_{u'} l_{v'}}) \right)  \\
        &= \frac{1}{n^{(k+h)/2}} \sum_{T \in \mathcal T^{k,h}_{1}}  \sum_{l} \left( \prod_{uv \in E({T})}  s_{l_u l_v} (1 - p s_{l_u l_v}) \right)  - 
        \frac{4p}{n^{(k+h)/2}} \sum_{T \in \mathcal T^{k,h}_{2}}  \sum_{l} \left( \prod_{uv \in E({T})}  s_{l_u l_v} (1 - p s_{l_u l_v}) \right)  \label{equ: dense_c_var}.
    \end{align}
    \end{small}
Since $s_e (1 - p s_e)$ is bounded for all edge $e$ from Assumption~\ref{assumption1}, with Definition~\ref{def: homomorphism density}, we can further write the covariance in terms of the  homomorphism density as follows \begin{align}
     \mathrm{Cov}( X_{n,k}, X_{n,h}) &= \frac{1}{n^{(k+h)/2}} \sum_{T \in \mathcal  T^{k,h}_{1}}  \sum_{  l_1, \dots , l_{(k+h)/2} \in [n]} \left( \prod_{e \in E({T}_{l})}  s_e (1 - p s_e) \right) \\
        &\quad  - 
        \frac{4p}{n^{(k+h)/2}} \sum_{T \in \mathcal  T^{k,h}_{2}}  \sum_{  l_1, \dots , l_{(k+h)/2} \in [n]} \left( \prod_{e \in E({T}_{l})}  s_e (1 - p s_e) \right) +o(1) \\
        &= \sum_{T \in \mathcal 
        T^{k,h}_{1}}  t(T,W'_n) -   4p \sum_{T \in \mathcal T^{k,h}_{2}}  t(T,W'_n)  +o(1). \label{equ: dense_c_tree_cov1}
 \end{align}
 
 By Lemma \ref{lem: W'}, if as $\delta_{1}(W_n, W) \rightarrow 0$ for some graphon $W$ as $n \rightarrow \infty$, we have $\delta_1(W_n',W')\to 0$. Then with Lemma~\ref{lem: graphon_multi}, $t(G,W_n')\to t(G,W')$ for any finite loopless multigraph $G$.
 Together with Case 2, we have when $k,h\geq 2$ are even, \begin{align}
     \mathrm{Cov}( X_{n,k}, X_{n,h}) =   \sum_{T \in \mathcal  T^{k,h}_{1}}  t(T,W') -   4p \sum_{T \in \mathcal T^{k,h}_{2}}  t(T,W') +  p \sum_{TC \in \mathcal{TC}^{k,h}} t( TC, W') +o(1).
    \end{align}

 \noindent \textbf{Step 2: Convergence of joint moments.}\label{step: 2_dense_c} 
 Next, we show $X_{n,k}$ are asymptotically jointly Gaussian. Since Gaussian distribution is characterized by its moments, it suffices to show convergence of joint moments of $L_{n,k}$. 
Let $k_1,\dots,k_t\geq 2$ be any $t$ positive integers independent of $n$. By Wick's formula \cite{wick1950evaluation}, it suffices to show as $n\to\infty$  
\begin{align}\label{equ: dense_nc_joint_momt}
   \mathbb E[ X_{n,k_1}\cdots X_{n,k_t}]\to \sum_{\pi\in \mathcal P_t} \prod_{(i,j)\in \pi} \Cov(X_{k_i},X_{k_j}),
\end{align}
where $\mathcal P_t$ is the set of all parings of $\{1,\dots,t\}$, i.e., all distinct ways of partitioning $[t]$ into pairs $\{i,j\}$. Note that when $t$ is odd, the right-hand side of \eqref{equ: dense_nc_joint_momt} is zero.

We first derive the alternative form of the joint moment that
\begin{align}
    \mathbb E[ X_{n,k_1}\cdots X_{n,k_t}] &=  \mathbb{E} \left[  (\sqrt{p})^{t} \cdot \prod_{i=1}^t \mathrm{tr}\left(\frac{\overline{A}_n}{\sqrt{np}}\right)^{k_i} - \mathbb{E}\mathrm{tr}\left(\frac{\overline{A}_n}{\sqrt{np}}\right)^{k_i}  \right] \\
    &= (\sqrt{p})^{t} \cdot \mathbb{E} 
    \sum_{I_{k_1}, I_{k_2}, \dots, I_{k_t}} \prod_{i=1}^t \left(  \frac{\overline{a}_{I_{k_i}}}{(np)^{k_i/2}} - \mathbb{E} \frac{\overline{a}_{I_{k_i}}}{(np)^{k_i/2}}  \right) \label{equ: dense_highm_c_b} .
\end{align}
where the sum is over $t$ tuples of $I_{k_1}$,\dots, $I_{k_t}$.  If $ k_i$ is odd for some $ i \in [n]$, then we have
\begin{align}\label{equ: dense_c_kodd}
    \mathbb E[ X_{n,k_1}\cdots X_{n,k_t}] = 0.
\end{align}

Then it remains to consider $ k_i$ is even for all $ i \in [n]$. Consider any tuple \( ( I_{k_1}, I_{k_2}, \dots, I_{k_t}) \) from \eqref{equ: dense_highm_c_b} and let \( G_{I_{k_1} \cup I_{k_2} \dots \cup I_{k_t}} \) denote its corresponding union graph with $v$ vertices and $e$ edges.  We introduce the following definition for a dependency graph. 
\begin{definition}[Dependency graph]\label{def: dependency_graph}
    Define \( H \) to be the dependency graph associated with \( G_{I_{k_1} \cup I_{k_2} \dots \cup I_{k_t}} \). Then each vertex $h_i$ of $H$ corresponds to a subgraph $ G_{I_{k_i}}$, and two vertices, $h_i$ and $h_j$ are connected by an edge if $G_{I_{k_i}}$ and $G_{I_{k_i}}$ are connected. Furthermore, we denote \( c \) as the number of connected components in \( H \). 
\end{definition}

   Denote $ v = \sum_{i = 1}^c v_{i}$ and $ e = \sum_{i = 1}^c e_{i}$. Specifically, each $e_i$ and $v_i$ is the number of vertices and edges of each independent subgraph. Thus, for each \( G_{I_{k_1} \cup I_{k_2} \dots \cup I_{k_t}} \) to have a nontrivial contribution, there can not exist any isolated point in \( H \) and each independent subgraph is nontrivial.

For each \( G_{I_{k_1} \cup I_{k_2} \dots \cup I_{k_t}} \) to have a nontrivial contribution, we require \begin{align}\label{equ: dense_c_gaussian_1}
   c \leq \left\lfloor t/2 \right\rfloor, \quad  v \leq  k/2  - t + 2c , \quad \text{and} \quad e \leq k/2  - t+c
\end{align}
where we denote $k = \sum_{i = 1}^t k_i$.
In addition, as $n \rightarrow \infty$, we have \begin{align}\label{equ: dense_c_gaussian_2}
     \mathbb  E[ X_{n,k_1}\cdots X_{n,k_t}]  = O( \frac{n^v}{n^{k/2}} \cdot \frac{p^e}{p^{k/2 - t/2}}),
\end{align}
then $ \mathbb E[ X_{n,k_1}\cdots X_{n,k_t}]$ remains non-negligible when $v \geq k/2 $.  

Together, it implies that $ \mathbb  E[ X_{n,k_1}\cdots X_{n,k_t}] $ is non-negligible when $ c = t/2$ and $t$ is even,
which means the dependency graph must form a perfect matching. Let $\mathcal P_t$ be the set of all parings of $\{1,\dots,t\}$, we have \begin{align}
    \mathbb  E[ X_{n,k_1}\cdots X_{n,k_t}] = \sum_{\pi\in \mathcal P_t} \prod_{(i,j)\in \pi} \Cov(X_{n,k_i},X_{n,k_j})+o(1).
\end{align}

Therefore, from Step 1, if $\delta_{\Box}(W_n', W') \rightarrow 0$ for some graphon $W'$ as $n \rightarrow \infty$, by Lemma \ref{lem: graphon_multi}, each $\Cov(X_{n,k_i},X_{n,k_j})$ converges to $\Cov(X_{k_i},X_{k_j})$. Then as $n\to\infty$  
\begin{align}\label{equ: dense_c_keven}
   \mathbb E[ X_{n,k_1}\cdots X_{n,k_t}] \to \sum_{\pi\in \mathcal P_t} \prod_{(i,j)\in \pi} \Cov(X_{k_i},X_{k_j}).
\end{align} 
Combining \eqref{equ: dense_c_kodd} and \eqref{equ: dense_c_keven}, the proof of Theorem~\ref{thm: centered_CLT_inhomo_dense} is complete.

\subsection{Proof of Theorem~\ref{thm: centered_CLT_inhomo_sparse1}}
The proof of Theorem~\ref{thm: centered_CLT_inhomo_sparse1} follows a similar approach to that of Theorem~\ref{thm: centered_CLT_inhomo_dense} with the new assumption of $p \to 0$ and $np \to \infty $. With the same linear statistics, we have 
\begin{align}
  \mathrm{Cov}( X_{n,k}, X_{n,h})= \frac{p}{(np)^{(k+h)/2}} \sum_{I_k,J_h} \mathbb{E}[\overline{a}_{I_k} \overline{a}_{J_h} ] - \mathbb{E}[\overline{a}_{I_k}  ] \mathbb{E} [\overline{a}_{J_h} ].
\end{align}

\textbf{Step 1: Computing the limiting covariance.}  For a given $G_{I_k\cup J_h}$, since the union graph must be connected to have a nontrivial contribution, it must satisfy \(v \leq e+1\). With Assumption \ref{assump: dense_1}, when $ v < (k+h)/2$, we have the contribution from all $G_{I_k\cup J_h}$ with a given $v,e$ is bounded by 
\begin{align}\label{equ: sparse_c_bound1}
        \frac{p}{(np)^{(k+h)/2}} \cdot n^v  ( Cp)^e  
        = O \left( \frac{ p^{e+1-v}}{(np)^{(k+h)/2 -v } } \right) = o(1).
    \end{align}
     Therefore, it remains to consider all union graphs with   $v-(k+h)/2 \geq 0$. Following  the same analysis  in the dense regime (see Step 1 in the proof of Theorem~\ref{thm: centered_CLT_inhomo_dense}),  the union graphs $G_{I_k \cup J_h}$ that contribute to the limiting variance must satisfy  \begin{align}
 e \leq v \leq e+1, \quad \text{and} \quad k+h \text{ is even}.
 \end{align}  
Therefore $v=e$ or $e+1$. We then consider 2 cases.

 \textbf{Case 1: \( v = e+1 \).} 
 In this case, $G_{I_k \cup J_h} $ is a tree formed by the union of two rooted planar trees $G_{I_k}$ and $G_{J_h}$ with overlapping edges. In this case, the right hand side of \eqref{equ: sparse_c_bound1} can be simplified to \begin{align}
     \frac{p}{(np)^{(k+h)/2}} \cdot n^v p^e  = \frac{1}{(np)^{(k+h)/2-v}} .
 \end{align}
Together with  \eqref{equ: dense_c_cov_ine_11}, we have $ \mathrm{Cov}( X_{n,k}, X_{n,h})$ remains nontrivial when \begin{align}
 (k+h)/2   \leq v = e+1 \leq  (k+h)/2.
\end{align}
Therefore, we have this case holds when $k,h \geq 2$ are even and $ e = v-1 = (k+h)/2 -1 $. Thus, the union graph belongs to $ \mathcal T^{k,h}_1$ defined in Definition~\ref{def: graph_tree}.

\textbf{Case 2: \(  v =e \).}
In this case, $G_{I_k \cup J_h}$ is a graph containing only one cycle.  From  \eqref{equ: dense_c_cov_ine_11}, we have $ e \geq (k+h)/2$. From  \eqref{equ: dense_c_cov_ine_22}, we have $ e \leq (k+h)/2$. Hence, $ v = e = (k+h)/2$. Therefore, by \eqref{equ: sparse_c_bound1}, we have \begin{align}
     \frac{p}{(np)^{(k+h)/2}} \cdot n^v p^e = \frac{p}{(np)^{(k+h)/2-v}} = p \to 0,
\end{align}
which implies the contribution in this case is zero in the limit for all $k,h$.

\smallskip

Together, having discussed all possible choices of $v,e$ for the union graphs, we have only Case 1 holds. Consequently, we can follow the same proof of  \eqref{equ: dense_c_tree_cov1}  to obtain that \begin{align}\label{equ: sparse_c_asy_cov}
      \mathrm{Cov}( X_{n,k}, X_{n,h}) =   \sum_{T \in \mathcal  T^{k,h}_{1}}  t(T,W_n) -   4p \sum_{T \in \mathcal 
 T^{k,h}_{2}}  t(T,W_n).
\end{align}

Since $ p \to 0$, the second term in \eqref{equ: sparse_c_asy_cov} vanishes as $ n \to \infty$. 
With only simple graphs in $\mathcal T_1^{k,h}$ left in the limit,  if $t(T,W_n)\to \beta_T$ for any finite tree $T$ as $n \rightarrow \infty$,  we have \begin{align}
     \mathrm{Cov}( X_{n,k}, X_{n,h}) =  \sum_{T \in \mathcal  T^{k,h}_{1}} \beta_T +o(1).
    \end{align}

\textbf{Step 2: Convergence of joint moments.}
First, we observe that \eqref{equ: dense_c_kodd} holds if there is at least one $k_i$ that is odd for some $i \in [n]$. In addition, from \eqref{equ: dense_highm_c_b} \eqref{equ: dense_c_gaussian_1}, and \eqref{equ: dense_c_gaussian_2}, we conclude that in the sparse regime, when $\mathbb E[ X_{n,k_1}\cdots X_{n,k_t}]$ is non-negligible, the dependency graph must still form a perfect matching. Consequently, Step 2 follows the same argument as in the dense regime. As $n \to \infty$,  
\begin{align}
    \mathbb E[ X_{n,k_1}\cdots X_{n,k_t}] \to \sum_{\pi\in \mathcal P_t} \prod_{(i,j)\in \pi} \Cov(X_{k_i},X_{k_j}).
\end{align}
This finishes the proof of Theorem~\ref{thm: centered_CLT_inhomo_sparse1}.

\subsection{Proof of Theorem~\ref{thm: inhomo_bounded_c}}

With the same linear statistics, we have 
\begin{align}\label{equ: bounded_cov_11}
  \mathrm{Cov}( X_{n,k}, X_{n,h})= \frac{p}{(np)^{(k+h)/2}} \sum_{I_k,J_h} \mathbb{E}[\overline{a}_{I_k} \overline{a}_{J_h} ] - \mathbb{E}[\overline{a}_{I_k}  ] \mathbb{E} [\overline{a}_{J_h} ].
\end{align}

For a given $G_{I_k\cup J_h}$, let $e$ be the number of distinct edges in $G_{I_k\cup J_h}$ and $v$ be the number of distinct vertices. Having $np$ being a constant and with Assumption~\ref{assump: dense_1}, when $ v < e+1$, we have the contribution from all $G_{I_k\cup J_h}$ with a given $v,e$ is bounded by \begin{align}
     \frac{p}{(np)^{(k+h)/2}} n^v(Cp)^e = O\left( \frac{n^{v-e-1} }{(np)^{(k+h)/2 -e -1} } \right)=O(n^{v-e-1}) =o(1).
\end{align}
Therefore, it remains to consider $ v \geq e+1$.
To ensure the connectivity of \( G_{I_k \cup J_h} \), we need \( v - 1 \leq e \) by the same argument from \eqref{equ: dense_c_cov_ine_22}.  Hence, for a nonzero contribution, each union graph must satisfy $v=e+1$,  which implies that \( G_{I_k \cup J_h} \) is a tree. 

Consider the individual graphs  $G_{I_k}, G_{J_h}$, then each $G_{I_k}, G_{J_h}$ is a tree and $I_k, J_h$ are closed walks on a tree of length $k,h$, respectively. If $k$ or $h$ is odd, such walks do not exist and the limiting contribution is $0$. Therefore, below we only consider the case when both $k,h$ are even.

Let $e_k,e_h$ denote the numbers of their edges, we have \[e_k=:i\leq \frac{k}{2}, \quad e_h=:j\leq \frac{h}{2},\] where $k$ and $h$ are both even. 
Additionally, to maintain the connectivity of the union graph $G_{I_k \cup J_h}$, it must hold that $e\leq i+j-1$.
Since $p=o(1)$, we have \(\mathbb{E}[{a}_{I_k}] \mathbb{E}[{a}_{J_h}]=o( \mathbb{E}[a_{I_k} a_{J_h} ])\), hence \eqref{equ: bounded_cov_11} can be simplified as 
\begin{align}
     \mathrm{Cov}(X_{n,k}, X_{n,h}) &= \frac{p}{(np)^{(k+h)/2}} \sum_{I_k,J_h} \mathbb{E}[a_{I_k} a_{J_h} ]+o(1).
\end{align}


Define \( \mathcal{T}_k \) to be the set of all rooted planar trees with \( k \) edges. 
we have the all $  G_{I_k \cup J_h}$ with nontrivial contribution can be expressed as gluing two trees $T_1\in \mathcal T_{i}$ and $T_2\in \mathcal T_{j}$ for some $i\leq k/2,j\leq l/2$. Recall the definition of $\mathcal{P}_{\#}(T_1, T_2)$  is defined in Definition~\ref{def:glue}. We can simplify the covariance and use the new notation defined above to obtain \begin{align}
    & \mathrm{Cov}(X_{n,k}, X_{n,h}) \\
     &= \frac{p}{(np)^{(k+h)/2}} \sum_{i\leq k/2, j\leq h/2}\sum_{T_1\in \mathcal T_i, T_2\in \mathcal T_j} ~\sum_{T \in \mathcal{P}_{\#}(T_1, T_2)} \sum_{l = (l_1, l_2, \dots, l_{v(T)})}  \prod_{e \in E(T)} ps_{l_e}(1-ps_{l_e}) + o(1)\\
   &= 
   \sum_{i\leq k/2, j\leq h/2}\sum_{T_1\in \mathcal T_i, T_2\in \mathcal T_j}  ~\sum_{T \in \mathcal{P}_{\#}(T_1, T_2)} \frac{1}{(np)^{(k+h)/2-v(T)}} \sum_{l = (l_1, l_2, \dots, l_{v(T)})}  \prod_{e \in E(T)} s_{l_e} +o(1) \\
   &=   \sum_{i\leq k/2, j\leq h/2}\sum_{T_1\in \mathcal T_i, T_2\in \mathcal T_j}  ~\sum_{T \in \mathcal{P}_{\#}(T_1, T_2)} \frac{1}{c^{(k+h)/2-v(T)}} t(T, W_n) + o(1).
\end{align}
 Assume for any finite tree $T$, $t(T, W_n)\to \beta_T$ as $n \rightarrow \infty$. Then, we have the covariance goes to $  \mathrm{Cov}(X_{k}, X_{h}) $ where 
 \begin{align}\label{equ: bounded_nc_cov_limit}
     \mathrm{Cov}(X_{k}, X_{h}) = \sum_{i\leq k/2, j\leq h/2}\sum_{T_1\in \mathcal T_i, T_2\in \mathcal T_j}  ~\sum_{T \in \mathcal{P}_{\#}(T_1, T_2)} \frac{\beta_T}{c^{(k+h)/2-v(T)}}+ o(1).
    \end{align}

The convergence of joint moments follows in the same way as in the proof of Theorem~\ref{thm: centered_CLT_inhomo_dense}, hence we omit the details. This finished the proof of Theorem~\ref{thm: inhomo_bounded_c}.

\section{Proof for the adjacency matrix}\label{sec:proof_adj}
\subsection{Proof of Theorem~\ref{thm:nc_CLT_inhomo_dense}}

The proof consists of two steps as follows.
We start with an asymptotic analysis of the covariance of $L_{n,k}, k\geq 2$.  In this proof, we will also introduce several definitions that will be repeatedly used in other theorems.

    We define a \( k \)-tuplet  as \( I_k = (i_1, i_2, \dots, i_k) \), where each \( i_j \) is an element of the set \( \{1, 2, \dots, n\} \). We define \( a_{I_k}=a_{i_1 i_2} a_{i_2 i_3} \dots a_{i_k i_1} \).
 Then, for any integer $ k,h \geq 2 $, we have the alternative form of the covariance that \begin{align}\label{equ: dense_nc_cov}
\mathrm{Cov}(L_{n,k}, L_{n,h}) = &\frac{n^2p^{1+\mathbf{1}\{k=2\}+\mathbf{1}\{h=2\}}}{(np)^{k+h}} \sum_{I_k,J_h} \mathrm{Cov}(a_{I_k}, a_{J_k}) \\
=&  \frac{n^2p^{1+\mathbf{1}\{k=2\}+\mathbf{1}\{h=2\}}}{(np)^{k+h}} \sum_{I_k,J_h} \mathbb{E}[a_{I_k} a_{J_h} ] - \mathbb{E}[a_{I_k}  ] \mathbb{E} [a_{J_h} ],
\end{align}
where the sum is over all $k$-tuple $i_1,\dots,i_k$ and $h$-tuple $j_1,\dots,j_h$.

    To each pair of $a_{I_k}, a_{J_h}$,  we associate a graph \( G_{I_k} \) to \( a_{I_k} \), constructed over the set of distinct indices  appearing in $I_k$ and edges formed by the unordered pairs $\{i_j,i_{j+1}\}$. Then $a_{I_k}$ is a closed walk on $G_{I_k}$ through all vertices. Similarly, We construct $ G_{J_h}$ associated to \( a_{J_h} \). We denote the union graph of $G_{I_k}$ and $ G_{J_h}$ by $G_{I_k\cup J_h}$. Then to find the asymptotic value of \eqref{equ: dense_nc_cov}, it suffices to estimate the contribution from all possible union graphs $G_{I_k\cup J_h}$.

\textbf{Step 1: Computing the limiting covariance.}\label{step: step1_dense_nc}
For a given $G_{I_k\cup J_h}$, let $e$ be the number of distinct edges in $G_{I_k\cup J_h}$ and $v$ be the number of distinct vertices. Then, having $p$ being a constant and with Assumption~\ref{assump: dense_1}, when $2+v<k+h$, we have the contribution from all $G_{I_k\cup J_h}$ with a given $v,e$ is bounded by 
\begin{align}\label{equ: dense_nc_cov_bound}
    \frac{n^2p }{(np)^{k+h}}n^v(Cp)^e=O(n^{2+v-k-h})=o(1).
\end{align}


Therefore, it remains to consider all union graphs with 
\begin{align}\label{eq:2v}
2+v\geq k+h.
\end{align}

    Since each entry in $A_n$ is independent, to have a nonzero $\mathrm{Cov}(a_{I_k}, a_{J_h})$,  $G_{I_k \cup J_h}$ must be  connected. Also, by Assumption \ref{assumption1} that $a_{ii}=0, i\in [n]$, $G_{I_k \cup J_h}$ cannot contain any loop. Let $e_{I_k}, e_{J_h}$ be the number of distinct edges in $G_{I_k},G_{j_h}$, respectively. 
These restrictions lead to 
\begin{align}\label{eq:3v}
v-1 \leq e \leq (k+h)-1.
\end{align}
 and \begin{align}\label{eq:eIJ_lowerbound}
(k+h)-1\geq  e_{I_k} + e_{J_h} -1 \geq e.
 \end{align}
 With \eqref{eq:2v} and \eqref{eq:3v}, the union graphs $G_{I_k \cup J_h}$ that contribute to the limiting variance must satisfy  \begin{align}\label{equ: dense_nc_ine}
 e-1 \leq v \leq e+1.
 \end{align}  
Therefore $v=e-1, e$, or $e+1$. We then consider 3 cases.

\smallskip 
 \textbf{Case 1: \( v  = e+1 \).}\label{case: dense_nc_case1}
 In this case,
 \( G_{I_k \cup J_h} \) is a tree formed by the union of two smaller trees, \( G_{I_k} \) and \( G_{J_h} \) with overlapping edges. At the same time, the fact that each \( a_{I_k}, a_{J_h} \) represents a closed walk on a tree requires every edge to be visited at least twice. This implies 
 \begin{align}\label{eq:eIJ}
     e_{I_k}\leq \frac{k}{2}, \quad e_{J_h}\leq \frac{h}{2}.
 \end{align}
From \eqref{eq:2v}, \eqref{eq:eIJ_lowerbound}, and \eqref{eq:eIJ}, we find 
\begin{align}
  k+h-3\leq e\leq   \frac{k+h}{2}-1,
\end{align}
 which gives $k+h\leq 4$. Thus, this case only holds when $k=h=2$, and the union graph is a $K_2$ in Definition~\ref{def:multi_cycle}.

\smallskip 
 \textbf{Case 2: $v = e $.}\label{case: dense_nc_case2}
 In this case, $G_{I_k \cup J_h}$ is a graph containing only one cycle. From \eqref{eq:2v} and \eqref{eq:3v}, we have $k+h-2\leq e\leq k+h-1$. Suppose $e=k+h-1$. Then from \eqref{eq:eIJ_lowerbound}, the only possibility is $e_{I_k}=k, e_{J_h}=h$ and $G_{I_k}, G_{J_h}$ has one overlapping edge. This implies $G_{I_k}$ is a cycle of length $k$ and $G_{J_h}$ is a cycle of length $h$, which creates 2 cycles in $G_{I_k \cup J_h}$, a contradiction. Therefore the only possibility is 
\begin{align}\label{eq:only_ekh}
e=k+h-2.
\end{align}
From \eqref{eq:eIJ_lowerbound}, we have in this case,
\begin{align}
  k+h-1 \leq   e_{I_k}+e_{J_h}\leq k+h.
\end{align}
If $e_{I_k}+e_{J_h}=k+h$, then both $G_{I_k}$ and $G_{J_h}$ are cycles and from \eqref{eq:only_ekh}, there are 2 overlapping edges in the union graph. This gives two cycles in $G_{I_k \cup J_h}$, a contradiction. Therefore we must have 
\begin{align}\label{eq:eIkeJh}
    e_{I_k}+e_{J_h}=k+h-1, 
\end{align}
and from \eqref{eq:only_ekh}, there is only one overlapping edge in $G_{I_k \cup J_h}$. 

Since the union graph contains only one cycle, without loss of generality,  we may assume $G_{I_k}$ contains a cycle and $G_{J_h}$ contains no cycle. Then $a_{J_h}$ is a closed walk on a tree, which gives $e_{J_h}\leq \frac{h}{2}$. 
Together with \eqref{eq:eIkeJh}, this implies $h=2,e_{J_h}=1$, and $e_{I_k}=e=k$. Hence both the union graph and $G_{I_k}$ are a cycle of length $k$ and $G_{J_h}$ is an edge. By symmetry, Case 2 only holds when $k\geq 3, h=2$ or  $k=2, h\geq 3$.

 \smallskip

\textbf{Case 3: $v  = e-1 $.} 
In this case,  the union graph contains 2 cycles. From \eqref{eq:2v}, we have 
    $e\geq k+h-1$. From \eqref{eq:eIJ_lowerbound}, we have $e\leq k+h-1$. Hence $e=k+h-1$ and $e_{I_k}=k, e_{J_h}=h$. Therefore, $G_{I_k}$, $G_{J_h}$ are two cycles and the union graph has one overlapping edge.
This case only holds when $k,h \geq 3$.


\smallskip

Having discussed all possible choices of $v,e$ for the union graphs, we are ready to compute the limiting covariance.

\smallskip

(i) $k=h=2$. The union graph is $K_2$ and there are two ways to form the labeled union graph from $G_{I_2}=K_2, G_{J_2}=K_2$.  By  \eqref{equ: dense_nc_cov}, we have \begin{align}
    \mathrm{Cov}(L_{n,2}, L_{n,2})  &= \frac{n^2p^3}{(np)^{4}} \cdot 2 \sum_{i,j} var(a_{ij}) = \frac{p^2}{(np)^{2}} \cdot 2\sum_{i,j} s_{ij}(1-ps_{ij})\\
    &=\frac{2}{n^2}\sum_{i,j}s_{ij}-\frac{2p}{n^2}\ \cdot  \sum_{i,j}s_{ij}^2 \\
    &=2 t(K_2, W_n)-2p \cdot t(C_2, W_n)\\
    &=2  t(K_2,W)-2p \cdot t(C_2, W)+o(1),
\end{align}
where the last line is due to Lemma~\ref{lem: graphon_multi} and the assumption $\delta_1(W_n,W)\to 0$.

(ii) $k=2, h\geq 3$. The only nonzero contribution in the limit comes from Case 2, where $G_{I_2}$ is an edge and $G_{J_h}$ is a cycle of length $h$. For a given labeling sequence of $G_{I_2}$ and $G_{J_h}$, there are $2h$ many ways to obtain $G_{I_2 \cup J_h}=C_h$ with labels, where $C_h$ is a cycle of length $h$.  Let $C_{2,h}$ be a cycle of length $h$ with one multi-edge attached.   Denote each unique labeling as $l = (l_1, l_2, \dots, l_{k+h-2})$, where $l_i\in [n]$ and each entry in $l$ is distinct.  Therefore we have
\begin{align}
    \mathrm{Cov}(L_{n,2}, L_{n,h})&=\frac{n^2p^2}{(np)^{2+h}}(2h)\sum_{l=(l_1,\dots,l_h)}  \prod_{uv\in E(C_h)}(ps_{l_ul_v})-ps_{l_1l_2}\prod_{uv\in E(C_h)}(ps_{l_ul_v}) +o(1)\\
    &=2h \left( t(C_h,W_n) - p \cdot t(C_{2,h}, W_n) \right) +o(1)\\
    &=2h\left( t(C_h,W) -p \cdot t(C_{2,h}, W) \right)+o(1),
\end{align}
where the last identity is due to Lemma~\ref{lem: graphon_multi}.

(iii) $k,h\geq 3$. The only nonzero contribution in the limit comes from Case 3. Given two cycles of length $k$ and $h$, there are $k$ and $h$ ways to select a shared edge in each graph and two possible directions to overlap. Thus, there are a total of $2kh$ ways to combine each pair of $G_{I_k} $  and $G_{J_h}$, which all result in the same unlabeled union graph $F_1^{k,h}$ (see Definition \ref{def: F-graph}). If we selected two shared vertices to overlap instead of edges, there are still $2kh$ combination ways and result in the same unlabeled union graph $F_2^{k,h}$ (see Definition \ref{def: F-graph}).

  Denote each unique labeling as $l = (l_1, l_2, \dots, l_{k+h-2})$,  where $l_i\in [n]$ and all $l_i$ are distinct. We can calculate the covariance using \eqref{equ: dense_nc_cov} that \begin{align}
    \mathrm{Cov}(L_{n,k}, L_{n,h})  
      &= \frac{n^2p}{(np)^{k+h}} \cdot 2kh \cdot \sum_{l = (l_1, l_2, \dots, l_{k+h-2})} \mathbb{E} \bigg[ \prod_{uv \in E(F^{k,h}_{1})} a_{l_ul_v}^2 \bigg] -  \mathbb{E} \bigg[ \prod_{uv \in E(F^{k,h}_{2})} a_{l_ul_v}^2 \bigg] +o(1) \\
    &= \frac{2kh}{n^{k+h-2}}  \sum_{l = (l_1, l_2, \dots, l_{k+h-2})} \bigg( \prod_{uv \in E(F^{k,h}_{1})} s_{l_ul_v} \bigg) -  p \bigg( \prod_{uv \in E(F^{k,h}_{2})} s_{l_ul_v} \bigg) +o(1) \label{equ: var_d_c_1}\\
     &= \frac{2kh}{n^{k+h-2}} \sum_{ l_1, \dots , l_{k+h-2} \in [n] } \bigg( \prod_{uv \in E(F^{k,h}_{1})} s_{l_ul_v} \bigg) -  p \bigg( \prod_{uv \in E(F^{k,h}_{2})} s_{l_ul_v} \bigg) + o(1), \label{equ: covar_d_nc_1_O}
\end{align}
where in \eqref{equ: covar_d_nc_1_O} we allow the coincidence of $ l_1, \dots , l_{k+h-2}$. By Definition~\ref{def: homomorphism density}, we have \begin{align}\label{equ: dense_nc_cov_densityhom}
     \mathrm{Cov}(L_{n,k}, L_{n,h}) = 2kh \left( t(F^{k,h}_{1}, W_n) - p \cdot t(F^{k,h}_{2}, W_n) \right) + o(1).
\end{align}
Therefore, if $\delta_1(W_n, W) \rightarrow 0$ for some graphon $W$ as $n \rightarrow \infty$, by Lemma \ref{lem: graphon_multi}, we have  \begin{align}
      \mathrm{Cov}(L_{n,k}, L_{n,h})=  2kh \left( t(F_1^{k,h}, W) - p \cdot t(F_2^{k,h}, W) \right)+o(1).
 \end{align}

 \textbf{Step 2: Convergence of joint moments.} \label{step: 2_dense_nc}
Next, we show $L_{n,k}$ are asymptotically jointly Gaussian. To analyze the joint moment, we first derive the alternative form of that
\begin{align}
    \mathbb E[ L_{n,k_1}\cdots L_{n,k_t}] &=  \mathbb{E} \left[ \left( n\sqrt{p} \right)^{t} \cdot p^{m} \cdot \prod_{i=1}^t \mathrm{tr}\left(\frac{A_n}{np}\right)^{k_i} - \mathbb{E}\mathrm{tr}\left(\frac{A_n}{np}\right)^{k_i}  \right]  \\
    &= \left( n\sqrt{p} \right)^{t} \cdot  p^{m} \cdot \mathbb{E} 
    \sum_{I_{k_1}, I_{k_2}, \dots, I_{k_t}} \prod_{i=1}^t \left(  \frac{a_{I_{k_i}}}{(np)^{k_i}} - \mathbb{E} \frac{a_{I_{k_i}}}{(np)^{k_i}}  \right)  \label{equ: dense_highm_nc_b},
\end{align}
where $ m = \#\{ k_i = 2\}$ and the sum is over $t$ tuples of $I_{k_1}$,\dots, $I_{k_t}$.

Consider any tuple \( ( I_{k_1}, I_{k_2}, \dots, I_{k_t}) \) from \eqref{equ: dense_highm_nc_b} and let \( G_{I_{k_1} \cup I_{k_2} \dots \cup I_{k_t}} \) denote its corresponding union graph with $v$ vertices and $e$ edges.  

 Define \( H \) to be the dependency graph associated with \( G_{I_{k_1} \cup I_{k_2} \dots \cup I_{k_t}} \) following the Definition~\ref{def: dependency_graph}.
 Denote $ v = \sum_{i = 1}^c v_{i}$ and $ e = \sum_{i = 1}^c e_{i}$. 
 Thus, for each \( G_{I_{k_1} \cup I_{k_2} \dots \cup I_{k_t}} \) to have a nontrivial contribution, there can not exist any isolated point in \( H \) and each independent subgraph is nontrivial.

For each \( G_{I_{k_1} \cup I_{k_2} \dots \cup I_{k_t}} \) to have a nontrivial contribution, we require \begin{align}\label{equ: dense_nc_high_uniongraph}
c \leq \left\lfloor t/2 \right\rfloor, \quad v \leq k -  2(t-c), \quad \text{and} \quad e \leq k - m - (t-c),
\end{align}
where we denote $k = \sum_{i = 1}^t k_i$.
In addition, as $n \rightarrow \infty$, we have \begin{align}\label{equ: dense_nc_high_O}
     \mathbb E[ L_{n,k_1}\cdots L_{n,k_t}] = O( \frac{n^v}{n^{k-t}} \cdot \frac{p^{e}}{p^{k-t/2-m}} ).
\end{align}
Then $\mathbb E[ L_{n,k_1}\cdots L_{n,k_t}]$ remains non-negligible when $v \geq k-t $. This occurs only when $ c = t/2$ and $t$ is even, resulting in $ v = k-t $. Therefore, it tells that when $ \mathbb E[ L_{n,k_1}\cdots L_{n,k_t}] $ is non-negligible, the dependency graph must form a perfect matching. Let $\mathcal P_t$ be the set of all parings of $\{1,\dots,t\}$, we have \begin{align}
    \mathbb E[ L_{n,k_1}\cdots L_{n,k_t}] = \sum_{\pi\in \mathcal P_t} \prod_{(i,j)\in \pi} \Cov(L_{n,k_i},L_{n,k_j}).
\end{align}

Therefore, from Step 1, if $\delta_1(W_n, W) \rightarrow 0$ for some graphon $W$ as $n \rightarrow \infty$, by Lemma \ref{lem: graphon_multi}, each $\Cov(L_{n,k_i},L_{n,k_j})$ converges to $\Cov(L_{k_i},L_{k_j})$. Then \eqref{equ: dense_nc_joint_momt} is achieved. This finishes the proof of Theorem~\ref{thm:nc_CLT_inhomo_dense}.



\subsection{Proof of Theorem~\ref{thm:nc_CLT_inhomo_sparse}}

\subsubsection{When $np\gg n^{1/2}$ and $p\to 0$}
Recall,
\begin{align}
\mathrm{Cov}(L_{n,k}, L_{n,h}) =  \frac{n^2p^{1+\mathbf{1}\{k=2\}+\mathbf{1}\{h=2\}}}{(np)^{k+h}} \sum_{I_k,J_h} \mathbb{E}[a_{I_k} a_{J_h} ] - \mathbb{E}[a_{I_k}  ] \mathbb{E} [a_{J_h} ].
\end{align}

Under Assumption~\ref{assump: dense_1} and Definition~\ref{def: union_graph}, consider all graphs \(G_{I_k \cup J_h}\) with given values of \(v\) and \(e\). Their contributions are bounded by  
\begin{align}\label{equ: sparse_nc_bound1}
    \frac{n^2 p^{1+\mathbf{1}\{k=2\}+\mathbf{1}\{h=2\}}}{(np)^{k+h}}n^v(Cp)^e 
= O\left(\frac{p^{e + \mathbf{1}\{k=2\}+\mathbf{1}\{h=2\} -1 - v}}{(np)^{k+h-2-v}}\right).
\end{align}
If \(v < e-1\), then for any \(G_{I_k \cup J_h}\) with nontrivial contribution, we have \(v < e-1 \leq (k+h)-2\). then since $np\to\infty$, the expression in \eqref{equ: sparse_nc_bound1} reduces to \(o(1)\). Additionally, since \(G_{I_k \cup J_h}\) is connected, it must satisfy \(v \leq e+1\). Therefore we have three cases: $v=e-1, e,$ or $e+1$.


 
 \smallskip  
 \textbf{Case 1: \( v= e+1 \).}
 In this case,
 \( G_{I_k \cup J_h} \) is a tree formed by the union of two smaller trees, \( G_{I_k} \) and \( G_{J_h} \) with overlapping edges. At the same time, each \( a_{I_k}, a_{J_h} \) represents a closed walk on a tree, and every edge is visited at least twice.  Together, this implies \begin{align}\label{equ: bounded_case_1_e}
     e \leq \frac{k+h}{2}-1,
 \end{align}
 and $k,h$ are even. Therefore, we analyze three cases.
 
\smallskip
 (i) $k=h=2$. Then \( e = v - 1 = 1 \), and \eqref{equ: sparse_nc_bound1} bounded by a constant. The corresponding union graph is a $K_2$ in Definition~\ref{def:multi_cycle}.

(ii) $k=2,h\geq 4$ and $h$ is even. Then \eqref{equ: sparse_nc_bound1} is bounded by 
\begin{align}
    o(p^{-1} n^{(e+1-h)/2})=o(p^{-1}n^{-1/2})=o(1).
\end{align}

 (iii) $k,h \geq 4$ and $k,h$ are even. Then as $ np\gg n^{1/2}$,  from   \eqref{equ: bounded_case_1_e}, we have  \eqref{equ: sparse_nc_bound1} is bounded by \begin{align}
      \frac{n^2 p}{(np)^{k+h}}n^v p^e \leq \frac{n^2}{(np)^{(k+h)/2}}=o(n^{2-\frac{k+h}{4}})=o(1).
 \end{align}

  \textbf{Case 2: $v = e$.} 
In this case, $G_{I_k \cup J_h}$ is a graph containing only one cycle. Thus, we have $k=2, h\geq 3$,  or $k,h \geq 3$. 

\smallskip
(i) $k=2, h\geq 3$. In this case, we have $ e = v = h$, then \eqref{equ: sparse_nc_bound1} is bounded by a constant. Hence, we have this case holds with the same corresponding union graph as in the dense case, which is a cycle of length $h$ denoted by $C_h$.

(ii) $k,h \geq 3$. In this case, $G_{I_k \cup J_h}$ is a graph containing only one cycle. Thus, either \( G_{I_k} \) and \( G_{J_h} \) share a cycle, meaning there are at least three overlapping edges, or they share a segment of a tree. The latter implies that the overlapping edge is traversed at least three times due to the fact that each \( a_{I_k}, a_{J_h} \) represents a closed walk.
In both cases, we have 
    $e \leq (k+h) -3$.
Then, as $ np \gg n^{1/2}$, from \eqref{equ: sparse_nc_bound1}, \begin{align}
     \frac{n^2 p^{1+\mathbf{1}\{k=2\}+\mathbf{1}\{h=2\}}}{(np)^{k+h}}n^v p^e 
     =\frac{n}{(np)^{k+h -e-1}}  = o(1).
\end{align}
Therefore, Case (ii) does not contribute to the limit.


  \textbf{Case 3: $v = e-1$.}
  In this case,  the union graph contains 2 cycles.
When $ e \leq  (k+h) -2$, from \eqref{equ: sparse_nc_bound1}, \begin{align}
     \frac{n^2 p^{1+\mathbf{1}\{k=2\}+\mathbf{1}\{h=2\}}}{(np)^{k+h}}n^v p^e 
     \leq  \frac{1}{(np)^{k+h -e-1}} = o(1).
\end{align}
Therefore, the case only holds when $ e = (k+h) -1 $, and $G_{I_k}$, $G_{J_h}$ are two cycles of length $k,h$, respectively with the union graph having exactly one overlapping edge. This only happens when $k,h\geq 3$.

\smallskip

Having discussed all possible choices of $v,e$ for the union graphs, we found the cases align precisely with the non-centered dense regime in Theorem~\ref{thm:nc_CLT_inhomo_dense}. Therefore, we can apply the results from Step 1 in the proof of Theorem~\ref{thm:nc_CLT_inhomo_dense}.
Under the new assumption of $ p \to 0$, we have the terms with an additional factor $p$ vanishes as $ n \to \infty$.  With only the existence of the simple graph left, we can relax the previous assumption to if $\delta_{\Box}(W_n,W)\to 0$ for some graphon $W$ as $n \rightarrow \infty$.  Then, by Lemma~\ref{graphonconvergence}, we have \begin{align}\label{equ: cov_np_gg_n1/2}
      \mathrm{Cov}(L_{n,k}, L_{n,h})= \begin{cases} 
    2 \cdot t(K_2,W) +o(1) & k=h=2\\
       2h \cdot  t(C_h, W) +o(1)   & k=2,h\geq 3\\
     2kh \cdot t(F_1^{k,h}, W) +o(1)   &    k,h\geq 3
      \end{cases}.
  \end{align}

From \eqref{equ: dense_highm_nc_b}, \eqref{equ: dense_nc_high_uniongraph}, and \eqref{equ: dense_nc_high_O}, we conclude that in the sparse regime, when \( \mathbb{E}[L_{n,k_1} \cdots L_{n,k_t}] \) is non-negligible, the dependency graph must still form a perfect matching. Consequently, the proof of convergence of joint moments follows the same argument as in the dense, non-centered regime (See Step 2  in the proof of Theorem~\ref{thm:nc_CLT_inhomo_dense}). As $n \to \infty$,  we have
\begin{align}
    \mathbb{E}[L_{n,k_1} \cdots L_{n,k_t}] \to \sum_{\pi \in \mathcal{P}_t} \prod_{(i,j) \in \pi} \mathrm{Cov}(L_{k_i}, L_{k_j}),
\end{align}
as desired.

\subsubsection{When $n^{1/2}p\to c $}
This case is similar to the regime when $np\gg n^{1/2}$ and we only address the differences. To compute the limiting covariance, we also have three cases $v=e-1,e,$ or $e+1$.

\textbf{Case 1}: $v=e+1$. The union graph is a tree.

(i) When $k=h=2$, the corresponding union graph is still $K_2$. 

(ii) When $k=2,h\geq 4$ and $h$ is even, then \eqref{equ: sparse_nc_bound1} is $O(1)$ only when $e=2, h=4$. We find the union graphs that give a nontrivial limiting covariance are all rooted planar trees with $h/2=2$  edges. Each rooted planar tree is counted $h$ many times due to the $h$ possible ways to combine $K_2$. Hence  when $k=2,h=4$, the contribution is 
\begin{align}
    &\frac{n^2p^2}{(np)^{2+4}} \cdot 4 \cdot p^{2}n^{3}\sum_{T\in \mathcal T_{2}}t(T,W_n)+o(1)=4 c^{-2} \sum_{T\in \mathcal T_2} t(T, W_n)+o(1).
\end{align}
And for $k=2, h\geq 6$, the contribution is zero.

(iii) When $k,h\geq 4, k,h$ are even.
The contribution is $O(1)$ only when $ k = h=4$ and $ v = e+1 = \frac{k+h}{2}$, and all possible union graphs $ G_{I_k \cup J_h} $ form $\mathcal T^{k,h}_1 = \mathcal T^{4,4}_1$. Hence, we have the contribution is \begin{align}
    &\frac{n^2p}{(np)^8} \cdot n^4 p^3 \sum_{T \in \mathcal  T^{4,4}_{1}}  t(T,W_n) +o(1) 
    = c^{-4}\sum_{T \in \mathcal  T^{4,4}_{1}}  t(T,W_n) +o(1).
\end{align}

\textbf{Case 2}: $v=e$. 

(i) When $k=2,h\geq 3$, the same conclusion holds as in the case for $np\gg n^{1/2}$.

(ii) $k,h\geq 3$. The only nonzero contribution comes from the case when $e=k+h-3$.
If  \( G_{I_k} \) and \( G_{J_h} \) share a segment of a tree, then $G_{I_k} \in \mathcal{TC}^{k}_r $ and $G_{J_h} $ is a rooted planar tree, where $ r \leq k-2 $ and $ h \geq 4$. Together,
\begin{align}
  e  \leq   e_{I_k} +   e_{J_h}-1 \leq \frac{k+r}{2} + \frac{h}{2} -1 
  \leq k + \frac{h}{2} -2.
\end{align}
Thus, this case does not hold for $ h \geq 4$.

If \( G_{I_k} \) and \( G_{J_h} \) share a cycle, this case only holds when $ k = h = 3$, where each \( G_{I_k} \) and \( G_{J_h} \) is a cycle of length $3$. Thus, there are total $3$ ways to choose the start edge and two directions to overlap, which all result in the same unlabeled union graph $C_3$ in Definition~\ref{def:multi_cycle}. Hence, the contribution is 
\begin{align}
    &\frac{n^2p}{(np)^{3+3}} \cdot 6 \cdot  p^3 n^3 \cdot t(C_3,W_n) 
= 6 c^{-2} \cdot t(C_3,W_n) + o(1) 
\end{align}

\textbf{Case 3}: $v=e-1$. The same argument for the case $np\gg n^{1/2}$ applies here and we obtain nontrivial covariance only when $k,h \geq 3$ and $e = k+h-1$.

Having discussed all possible choices of $v,e$ for the union graphs, we obtain the following limiting covariance:
\begin{align}
      \mathrm{Cov}(L_{n,k}, L_{n,h})= \begin{cases} 
    2 \cdot t(K_2,W) +o(1) & k=h=2\\[4pt]
       2h \cdot  t(C_h, W) +o(1)   & k=2,h = 3 \text{ or } h \geq 5\\[4pt]
     2h \cdot  t(C_h, W) + 4 c^{-2} \sum_{T\in \mathcal T_2} t(T, W)+o(1)  & k=2,h = 4 \\[4pt]
       6 c^{-2} \cdot t(C_3,W) + 2kh \cdot t(F_1^{k,h}, W) +o(1) &    k=h = 3 \\[4pt] 
    c^{-4}\sum_{T \in \mathcal  T^{k,h}_{1}}  t(T,W) + 2kh \cdot t(F_1^{k,h}, W) +o(1) &    k=h = 4 \\[4pt]
       2kh \cdot t(F_1^{k,h}, W) +o(1)   & \text{otherwise}   
      \end{cases}.
  \end{align}
Convergence of joint moments follows the same argument as in the proof of Theorem~\ref{thm:nc_CLT_inhomo_dense}.

\subsubsection{When $ n^{\frac{1}{m}} \ll np\ll n^{\frac{1}{m-1}}$ for $m\geq 3, m\in \mathbb Z$} \label{sec:nm}
We can expand the covariance as
\begin{align}
\mathrm{Cov}(L_{n,k}, L_{n,h}) =  \frac{n^2p^{1+\mathbf{1}\{k=2\}+\mathbf{1}\{ h=2\}}}{(np)^{k+h}} \sum_{I_k,J_h} \mathbb{E}[a_{I_k} a_{J_h} ] - \mathbb{E}[a_{I_k}  ] \mathbb{E} [a_{J_h} ].
\end{align}
Under Assumption~\ref{assump: dense_1} and Definition~\ref{def: union_graph}, consider all graphs \(G_{I_k \cup J_h}\) with given values of \(v\) and \(e\). Their contributions are bounded by  
\begin{align}\label{equ: sparse_nc_bound_m}
   O\left(\frac{p^{e + \mathbf{1}\{k=2\}+\mathbf{1}\{h=2\} -1 - v}}{(np)^{k+h-2-v}}\right).
\end{align}
Same as before, the only possible contribution in the limit comes from three cases: $v=e-1, e,$ or $e+1$.

 \smallskip  
 
 \textbf{Case 1: \( v= e+1 \).} In this case $k,h$ must be even, and $e\leq \frac{k+h}{2}-1$.

 (i) When $k=h=2$, the corresponding union graph is still $K_2$. 

(ii) When $k+h\geq 6$, from \eqref{equ: sparse_nc_bound_m},
the contribution is bounded by 
\begin{align}
   O\left(\frac{p^{\mathbf{1}\{k=2\}+\mathbf{1}\{h=2\}-2}}{(np)^{\frac{k+h}{2}-2}} \right)=o(1), 
\end{align}
where we use the condition $np\gg n^{1/m}$.

  \textbf{Case 2: $v = e$.} 
  In this case, $G_{I_k \cup J_h}$ is a graph containing only one cycle. Thus, we have $k=2, h\geq 3$,  or $k,h \geq 3$.

(i) $k=2, h\geq 3$. In this case, we have $ e = v = h$, 
then when $h\geq 2m-1$, \eqref{equ: sparse_nc_bound_m} is bounded by a constant, with the same corresponding union graph $C_h$.

(ii) $k,h \geq 3$.  
The contribution is 
\begin{align}\label{eq:p-1}
O\left(\frac{p^{-1}}{(np)^{k+h-2-e}} \right).
\end{align}
Let $\ell$ be the length of the cycle in $G_{I_k\cup J_h}$. In this case, $h$ must be even.
We consider two cases.
\begin{itemize}
    \item If $G_{I_k}$ contains a cycle and $G_{J_h}$ contains no cycles. Then  $e_{I_k} \leq \frac{k+\ell}{2}$, $e_{J_h} \leq \frac{h}{2}$, where $h$ must be even. And \[e\leq e_{I_k}+e_{J_k}-1 \leq \frac{k+h+\ell}{2}-1\leq k+\frac{h}{2}-1.\]
    Under the assumption $np\gg n^{1/m}$, when $h\geq 2m$,
    we have the contribution \eqref{eq:p-1} is  $O\left(\frac{n}{(np)^{h/2}} \right)=o(1)$. 
    \item If $G_{I_k}$ and $G_{J_h}$ share a cycle of length $\ell$, then $e_{I_k}\leq \frac{k+\ell}{2}, e_{J_h} \leq \frac{h+\ell}{2}$ and $e\leq e_{I_k}+e_{J_h}-\ell=\frac{k+h}{2}$. Then \eqref{eq:p-1} can be bounded by \[O\left(p^{-1} (np)^{2-\frac{k+h}{2}} \right)=O(p^{-1} (np)^{3-2m})=o(1)\]
    when $np\gg n^{1/m}$.
\end{itemize}

Therefore, Case (ii) does not contribute to the limit.


  \textbf{Case 3: $v = e-1$.}
 When $ e = (k+h) -1 $ and  $k,h\geq 2m-1$, this gives a nontrivial contribution in the limit, and $G_{I_k}$, $G_{J_h}$ are two cycles of length $k,h$, respectively with the union graph having exactly one overlapping edge. 

\smallskip

Having discussed all possible choices of $v,e$ for the union graphs,  if $\delta_{\Box}(W_n,W)\to 0$ for some graphon $W$ as $n \rightarrow \infty$.  we have \begin{align}\label{equ: cov_nc_sparse_1/m}
    \mathrm{Cov}(L_{n,k}, L_{n,h})= \begin{cases} 
     2 \cdot t(K_2,W) +o(1) & k=h=2\\
       2h \cdot  t(C_h, W) +o(1)   & k=2,h\geq 2m-1\\
     2kh \cdot t(F_1^{k,h}, W) +o(1)    &    k,h\geq 2m-1
      \end{cases}.
  \end{align}

Convergence of joint moments follows the same argument as in the proof of Theorem~\ref{thm:nc_CLT_inhomo_dense}, we omit the details.

\subsubsection{When $n^{1-\frac{1}{m}}p\to c$ with $m\geq 3,m\in \mathbb Z$}
This case is similar to the regime when $ n^{\frac{1}{m}} \ll np\ll n^{\frac{1}{m-1}}$ for $m\geq 3, m\in \mathbb Z$ and we only address the differences. We also have the three cases $v=e-1,e,$ or $e+1$ for all union graphs.

\textbf{Case 1: \( v= e+1 \).} In this case $k,h$ must be even.

 (i) When $k=h=2$, the corresponding union graph is still $K_2$. 
 
 (ii) When $k+h\geq 6$, then at least $k \geq 2m$ or $h \geq 2m$ since $k,h$ are even. We consider two cases. \begin{itemize}
  
     \item If $k=2$ and $ h \geq 2m$.
First, when $k+h > 2+2m$, by \eqref{equ: sparse_nc_bound_m}, the contribution is bounded by \begin{align}
O \left( \frac{1}{p \cdot (np)^{\frac{k+h}{2}-2}} \right) = O \left( \frac{n}{(c n)^{\frac{k+h}{2m}- \frac{1}{m}  }} 
\right)  = o(1),
     \end{align}
     where we use the condition that $ n^{1-\frac{1}{m}}p\to c$.
  When $k+h = 2+2m$, which only holds when $k = 2$ and $h = 2m$, \eqref{equ: sparse_nc_bound_m} is bounded by a constant, with the corresponding union graphs all rooted planar trees with edges $ h/2 = m$. Each rooted planar tree is counted $h$ many times due to the $h$ possible ways to combine $K_2$. Hence, the contribution is \begin{align}
     &\frac{n^2p^2}{(np)^{2+2m}} \cdot 2m \cdot p^{m} n^{m+1} \sum_{T\in \mathcal T_{m}}t(T,W_n)+o(1) 
     = \frac{2m}{c^m} \sum_{T\in \mathcal T_{m}}t(T,W_n)+o(1).
 \end{align}
\item If $ k,h \geq 2m$, then \eqref{equ: sparse_nc_bound_m} is bounded by \begin{align}
 O\left(\frac{n^2}{  (np)^{\frac{k+h}{2}} } \right) =  O\left(\frac{n^2}{  (cn)^{\frac{k+h}{2m}} } \right), 
\end{align}
where we use $ n^{1-\frac{1}{m}}p\to c$, and thus this case only holds when $ k = h = 2m$, and $ v-1 = e = \frac{k+h}{2} -1 = 2m-1$. In this case, $G_{I_k \cup J_h} $ is a tree formed by the union of two rooted planar trees $G_{I_k}$ and $G_{J_h}$ both with $m$ edges and an overlapping edge, as defined in Definition~\ref{def: graph_tree}. Hence, we have the contribution to be \begin{align}
    &\frac{n^2p}{(np)^{4m}} \cdot p^{2m-1} n^{2m} \sum_{T \in \mathcal  T^{2m,2m}_{1}}  t(T,W_n) +o(1) 
    = c^{-2m} \sum_{T \in \mathcal  T^{2m,2m}_{1}}  t(T,W_n) +o(1).
\end{align}
 \end{itemize}

 \textbf{Case 2 and 3: $v = e$ or $v=e-1$.}  We have the same conclusion holds as in Section~\ref{sec:nm}.

Therefore, the following limiting covariance holds for $m\geq 3$: 
\begin{footnotesize}
 \begin{align}
    \mathrm{Cov}(L_{n,k}, L_{n,h})= \begin{cases} 
    2 \cdot t(K_2,W) +o(1)  & k=h=2 \\[4pt]
    2h \cdot  t(C_h, W) +o(1)   & k=2,h =  2m-1 \text{ or } h \geq 2m+1 \\[4pt]
2m \cdot c^{-m} \sum_{T\in \mathcal T_{m}}t(T,W)+ 4m \cdot  t(C_{2m}, W) +o(1)  & k=2, h = 2m\\[4pt]
       c^{-2m} \sum_{T \in \mathcal  T^{2m,2m}_{1}}  t(T,W) +8m^2 \cdot t(F_1^{2m,2m}, W) +o(1)  & k = h = 2m \\[4pt]
     2kh \cdot t(F_1^{k,h}, W) +o(1)    &   \text{otherwise} 
      \end{cases}.
\end{align}
\end{footnotesize}
Convergence of joint moments follows the same argument as in the proof of Theorem~\ref{thm:nc_CLT_inhomo_dense}, we omit the details.

\subsubsection{When $np=n^{o(1)}$}
For any integer $k,h \geq 2$, we have the alternative form of the covariance  between $\widetilde{L}_{n,k}$ and  $\widetilde{L}_{n,h}$ given by \begin{align} \mathrm{Cov}(\widetilde{L}_{n,k}, \widetilde{L}_{n,h}) &= \frac{p}{(np)^{(k+h)/2}} \sum_{I_k,J_h} \mathbb{E}[a_{I_k} a_{J_h} ] - \mathbb{E}[ a_{I_k} ]\mathbb{E}[ a_{J_h}],
\end{align}where the sum is over all $k$-tuplet $I_k=(i_1,\dots,i_k)$ and $h$-tuplet $J_h=(j_1,\dots,j_h)$. 

\smallskip

\textbf{Step 1: Computing the limiting covariance.}

 For a given $G_{I_k\cup J_h}$, let $e$ be the number of distinct edges in $G_{I_k\cup J_h}$ and $v$ be the number of distinct vertices. Having $np = n^{o(1)}, np\to\infty$ and with Assumption~\ref{assump: dense_1}, when $ v < e+1$, we have the contribution from all $G_{I_k\cup J_h}$ with a given $v,e$ is bounded by 
\begin{align}\label{equ: sparse_npo(1)_O}
     \frac{p}{(np)^{(k+h)/2}} n^v(Cp)^e = O\left( \frac{n^{v-e-1} }{(np)^{(k+h)/2 -e -1} } \right) =o(1).
\end{align}

Then, it remains to consider $ v \geq e+1$. Together with the connectivity of \( G_{I_k \cup J_h} \) that $ v  \leq e +1 $, only the case $ v= e+1$ left, which further implies $ e \leq (k+h)/2-1$. Again, from \eqref{equ: sparse_npo(1)_O}, we have for a nonzero contribution, we require $ (k+h)/2 -e -1 \leq 0$. 

Therefore, each union graph must satisfy $v=e+1 = (k+h)/2 $, which aligns with Case 1 in the dense, centered regime (See Case 1 in the proof of Theorem~\ref{thm: centered_CLT_inhomo_dense}). Apply the same notation defined in Definition~\ref{def: graph_tree}, we have \begin{align}
    \mathrm{Cov}(\widetilde{L}_{n,k}, \widetilde{L}_{n,h}) = \sum_{T \in \mathcal 
        T^{k,h}_{1}}  t(T,W_n) -   p \sum_{T \in \mathcal T^{k,h}_{2}}  t(T,W_n)  +o(1).
\end{align}
Since $ p \to 0$, the second term vanishes as $n\to\infty$. if $t(T,W_n)\to \beta_T$ for any finite tree $T$,  then,  we have \begin{align}\label{equ: sparse_npo(1)_cov_limit}
    \mathrm{Cov}(\widetilde{L}_{k}, \widetilde{L}_{h}) = \sum_{T \in \mathcal 
        T^{k,h}_{1}} \beta_T +o(1).
\end{align}

\textbf{Step 2: Convergence of joint moments.}\label{step: step2_sparse_o(1)}
Next, we show $\widetilde{L}_{n,k}$ are asymptotically jointly Gaussian. To analyze the joint moment, we first derive the alternative form of that
\begin{align}
    \mathbb E[ \widetilde{L}_{n,k_1}\cdots \widetilde{L}_{n,k_t}] &=  \mathbb{E} \left[ \left( \sqrt{p} \right)^{t}  \cdot \prod_{i=1}^t \mathrm{tr}\left(\frac{A_n}{\sqrt{np}}\right)^{k_i} - \mathbb{E}\mathrm{tr}\left(\frac{A_n}{\sqrt{np}}\right)^{k_i}  \right] \label{equ: dense_highm_o} \\
    &= \left( \sqrt{p} \right)^{t} \cdot \mathbb{E} 
    \sum_{I_{k_1}, I_{k_2}, \dots, I_{k_t}} \prod_{i=1}^t \left(  \frac{a_{I_{k_i}}}{(np)^{k_i/2}} - \mathbb{E} \frac{a_{I_{k_i}}}{(np)^{k_i/2}}  \right)  \label{equ: bounded_highm_b},
\end{align}
where the sum is over $t$ tuples of $I_{k_1}$,\dots, $I_{k_t}$.  Consider any tuple \( ( I_{k_1}, I_{k_2}, \dots, I_{k_t}) \) from \eqref{equ: bounded_highm_b} and let \( G_{I_{k_1} \cup I_{k_2} \dots \cup I_{k_t}} \) denote its corresponding union graph with $v$ vertices and $e$ edges.

Then by Definition~\ref{def: dependency_graph} and the same analysis as in \eqref{equ: dense_c_gaussian_1}, we have for each \( G_{I_{k_1} \cup I_{k_2} \dots \cup I_{k_t}} \) to have a nontrivial contribution, we require $ c \leq \left\lfloor t/2 \right\rfloor$ and $ v \leq e + c$. In addition, as $n \rightarrow \infty$,  \begin{align}
      E[\widetilde{L}_{n,k_1}\cdots \widetilde{L}_{n,k_t}] 
      = O \left( \frac{n^{v-t/2 - e }}{ {(np)}^{k/2 - t/2 - e}} \right),
\end{align}
which remains non-negligible when $v \geq  t/2+e $. 
Together, it implies that when $  \mathbb E[ \widetilde{L}_{n,k_1}\cdots \widetilde{L}_{n,k_t}]$ is non-negligible, the dependency graph must form a perfect matching, that is $t$ is even and \begin{align}
    c = t/2, \quad v =t/2+e, \quad \text{and} \quad e =k/2 - t/2.
\end{align} 
Therefore, using the result in \eqref{equ: sparse_npo(1)_cov_limit} from Step 1, we have \begin{align}
    E[ \widetilde{L}_{n,k_1}\cdots \widetilde{L}_{n,k_t}]  = \sum_{\pi\in \mathcal P_t} \prod_{(i,j)\in \pi} \Cov(\widetilde{L}_{n,k_i},\widetilde{L}_{n,k_j}) \to \sum_{\pi\in \mathcal P_t} \prod_{(i,j)\in \pi} \Cov(\widetilde{L}_{k_i},\widetilde{L}_{k_j}).
\end{align}
This finishes the proof.

\subsection{Proof of Theorem~\ref{thm:nc_CLT_inhomo_bounded}}
This proof is similar to the proof of Theorem~\ref{thm: inhomo_bounded_c}, and we only address the differences.
For any integer $k,h \geq 2$, we have the alternative form of the covariance  between $\widetilde{L}_{n,k}$ and  $\widetilde{L}_{n,h}$ given by \begin{align}\label{equ: bounded_cov_1} \mathrm{Cov}(\widetilde{L}_{n,k}, \widetilde{L}_{n,h}) &= \frac{p}{(np)^{(k+h)/2}} \sum_{I_k,J_h} \mathbb{E}[a_{I_k} a_{J_h} ] - \mathbb{E}[ a_{I_k} ]\mathbb{E}[ a_{J_h}].
\end{align}
Then, all $  G_{I_k \cup J_h}$ with nontrivial contribution can be expressed as gluing two trees $T_1\in \mathcal T_{i}$ and $T_2\in \mathcal T_{j}$ for some $i\leq k/2,j\leq l/2$. Recall the definition of $\mathcal{P}_{\#}(T_1, T_2)$   in Definition~\ref{def:glue}. We can simplify the covariance and use the new notation defined above to obtain \begin{align}
     \mathrm{Cov}(\widetilde{L}_{n,k}, \widetilde{L}_{n,h}) &= \frac{p}{(np)^{(k+h)/2}} \sum_{i\leq k/2, j\leq h/2}\sum_{T_1\in \mathcal T_i, T_2\in \mathcal T_j} ~\sum_{T \in \mathcal{P}_{\#}(T_1, T_2)} \sum_{l = (l_1, l_2, \dots, l_{v(T)})}  \prod_{e \in E(T)} ps_{l_e} + o(1)\\
   &= 
   \sum_{i\leq k/2, j\leq h/2}\sum_{T_1\in \mathcal T_i, T_2\in \mathcal T_j}  ~\sum_{T \in \mathcal{P}_{\#}(T_1, T_2)} \frac{1}{(np)^{(k+h)/2-v(T)}} \sum_{l = (l_1, l_2, \dots, l_{v(T)})}  \prod_{e \in E(T)} s_{l_e} +o(1) \\
   &=   \sum_{i\leq k/2, j\leq h/2}\sum_{T_1\in \mathcal T_i, T_2\in \mathcal T_j}  ~\sum_{T \in \mathcal{P}_{\#}(T_1, T_2)} \frac{1}{c^{(k+h)/2-v(T)}} t(T, W_n) + o(1).
   \label{equ: bounded_c_var}
\end{align}
 Assume for any finite tree $T$, $t(T, W_n)\to \beta_T$ as $n \rightarrow \infty$. Then we have the covariance goes to $ \mathrm{Cov}(\widetilde{L}_k, \widetilde{L}_h) $ where \begin{align}
      \mathrm{Cov}(\widetilde{L}_k, \widetilde{L}_h) = \sum_{i\leq k/2, j\leq h/2}\sum_{T_1\in \mathcal T_i, T_2\in \mathcal T_j}  ~\sum_{T \in \mathcal{P}_{\#}(T_1, T_2)} \frac{\beta_T}{c^{(k+h)/2-v(T)}}+ o(1).
    \end{align}
With the same linear spectral statistics $ \widetilde{L}_{n,k}$ and under the assumption of $np = c$, we can repeat the proof from the sparse, non-centered, $np = n^{o(1)} 
$ regime (See the proof of Theorem~\ref{thm:nc_CLT_inhomo_sparse}) to establish the convergence of joint moments. This finishes the proof of Theorem~\ref{thm:nc_CLT_inhomo_bounded}.

\subsection*{Acknowledgements}
    Y. Zhu was partially supported by an AMS-Simons Travel Grant. This material is based upon work supported by the Swedish Research Council under grant no. 2021-06594 while Y. Zhu was in residence at Institut Mittag-Leffler in Djursholm, Sweden during the Fall of 2024.

\bibliographystyle{plain}
\bibliography{ref}

\begin{thebibliography}{10}

\bibitem{abbe2018community}
Emmanuel Abbe.
\newblock Community detection and stochastic block models: recent developments.
\newblock {\em Journal of Machine Learning Research}, 18(177):1--86, 2018.

\bibitem{abbe2023learning}
Emmanuel Abbe, Shuangping Li, and Allan Sly.
\newblock Learning sparse graphons and the generalized kesten--stigum threshold.
\newblock {\em The Annals of Statistics}, 51(2):599--623, 2023.

\bibitem{adhikari2021linear}
Kartick Adhikari, Indrajit Jana, and Koushik Saha.
\newblock Linear eigenvalue statistics of random matrices with a variance profile.
\newblock {\em Random Matrices: Theory and Applications}, 10(03):2250004, 2021.

\bibitem{ajanki2019quadratic}
Oskari Ajanki, L{\'a}szl{\'o} Erd{\H{o}}s, and Torben Kr{\"u}ger.
\newblock {\em Quadratic vector equations on complex upper half-plane}, volume 261.
\newblock American Mathematical Society, 2019.

\bibitem{ajanki2017universality}
Oskari~H Ajanki, L{\'a}szl{\'o} Erd{\H{o}}s, and Torben Kr{\"u}ger.
\newblock Universality for general wigner-type matrices.
\newblock {\em Probability Theory and Related Fields}, 169:667--727, 2017.

\bibitem{alt2021extremal}
Johannes Alt, Rapha{\"e}l Ducatez, and Antti Knowles.
\newblock Extremal eigenvalues of critical {E}rd{\H{o}}s--{R}{\'e}nyi graphs.
\newblock {\em The Annals of Probability}, 49(3):1347--1401, 2021.

\bibitem{anderson2010introduction}
Greg~W Anderson, Alice Guionnet, and Ofer Zeitouni.
\newblock {\em An introduction to random matrices}, volume 118.
\newblock Cambridge university press, 2010.

\bibitem{anderson2006clt}
Greg~W Anderson and Ofer Zeitouni.
\newblock A clt for a band matrix model.
\newblock {\em Probability Theory and Related Fields}, 134(2):283--338, 2006.

\bibitem{augeri2024large}
Fanny Augeri.
\newblock Large deviations of the empirical spectral measure of supercritical sparse wigner matrices.
\newblock {\em arXiv preprint arXiv:2401.11925}, 2024.

\bibitem{avena2023limiting}
Luca Avena, Rajat~Subhra Hazra, and Nandan Malhotra.
\newblock Limiting spectra of inhomogeneous random graphs.
\newblock {\em arXiv preprint arXiv:2312.02805}, 2023.

\bibitem{backhausz2019almost}
{\'A}gnes Backhausz and Bal{\'a}zs Szegedy.
\newblock On the almost eigenvectors of random regular graphs.
\newblock {\em The Annals of Probability}, 47(3):1677--1725, 2019.

\bibitem{backhausz2022action}
{\'A}gnes Backhausz and Bal{\'a}zs Szegedy.
\newblock Action convergence of operators and graphs.
\newblock {\em Canadian Journal of Mathematics}, 74(1):72--121, 2022.

\bibitem{banerjee2017optimal}
Debapratim Banerjee and Zongming Ma.
\newblock Optimal hypothesis testing for stochastic block models with growing degrees.
\newblock {\em arXiv preprint arXiv:1705.05305}, 2017.

\bibitem{ben2015fluctuations}
G{\'e}rard Ben~Arous and Kim Dang.
\newblock On fluctuations of eigenvalues of random permutation matrices.
\newblock {\em Annales de l'IHP Probabilit{\'e}s et statistiques}, 51(2):620--647, 2015.

\bibitem{benaych2019largest}
Florent Benaych-Georges, Charles Bordenave, and Antti Knowles.
\newblock Largest eigenvalues of sparse inhomogeneous erd{\H{o}}s--r{\'e}nyi graphs.
\newblock {\em The Annals of Probability}, 47(3):1653--1676, 2019.

\bibitem{benaych2020spectral}
Florent Benaych-Georges, Charles Bordenave, and Antti Knowles.
\newblock Spectral radii of sparse random matrices.
\newblock {\em Annales de l’Institut Henri Poincar{\'e}-Probabilit{\'e}s et Statistiques}, 56(3):2141--2161, 2020.

\bibitem{benaych2014central}
Florent Benaych-Georges, Alice Guionnet, and Camille Male.
\newblock Central limit theorems for linear statistics of heavy tailed random matrices.
\newblock {\em Communications in Mathematical Physics}, 329(2):641--686, 2014.

\bibitem{bordenave2023detection}
Charles Bordenave, Simon Coste, and Raj~Rao Nadakuditi.
\newblock Detection thresholds in very sparse matrix completion.
\newblock {\em Foundations of Computational Mathematics}, 23(5):1619--1743, 2023.

\bibitem{bordenave2024large}
Charles Bordenave, Alice Guionnet, and Camille Male.
\newblock Large deviations for macroscopic observables of heavy-tailed matrices.
\newblock {\em arXiv preprint arXiv:2409.14027}, 2024.

\bibitem{bordenave2010resolvent}
Charles Bordenave and Marc Lelarge.
\newblock Resolvent of large random graphs.
\newblock {\em Random Structures \& Algorithms}, 37(3):332--352, 2010.

\bibitem{bordenave2011rank}
Charles Bordenave, Marc Lelarge, and Justin Salez.
\newblock The rank of diluted random graphs.
\newblock {\em Annals of Probability}, 39(3):1097--1121, 2011.

\bibitem{borgs2008convergent}
Christian Borgs, Jennifer~T Chayes, L{\'a}szl{\'o} Lov{\'a}sz, Vera~T S{\'o}s, and Katalin Vesztergombi.
\newblock Convergent sequences of dense graphs i: Subgraph frequencies, metric properties and testing.
\newblock {\em Advances in Mathematics}, 219(6):1801--1851, 2008.

\bibitem{borgs2012convergent}
Christian Borgs, Jennifer~T Chayes, L{\'a}szl{\'o} Lov{\'a}sz, Vera~T S{\'o}s, and Katalin Vesztergombi.
\newblock Convergent sequences of dense graphs ii. multiway cuts and statistical physics.
\newblock {\em Annals of Mathematics}, pages 151--219, 2012.

\bibitem{cebron2024graphon}
Guillaume C{\'e}bron, Patrick~Oliveira Santos, and Pierre Youssef.
\newblock Graphon-theoretic approach to central limit theorems for $\epsilon$-independence.
\newblock {\em arXiv preprint arXiv:2411.13062}, 2024.

\bibitem{chakrabarty2020eigenvalues}
Arijit Chakrabarty, Sukrit Chakraborty, and Rajat~Subhra Hazra.
\newblock Eigenvalues outside the bulk of inhomogeneous erd{\H{o}}s--r{\'e}nyi random graphs.
\newblock {\em Journal of Statistical Physics}, 181(5):1746--1780, 2020.

\bibitem{chakrabarty2021spectra}
Arijit Chakrabarty, Rajat~Subhra Hazra, Frank Den~Hollander, and Matteo Sfragara.
\newblock Spectra of adjacency and laplacian matrices of inhomogeneous erd{\H{o}}s--r{\'e}nyi random graphs.
\newblock {\em Random matrices: Theory and applications}, 10(01):2150009, 2021.

\bibitem{chatterjee2022spectral}
Anirban Chatterjee and Rajat~Subhra Hazra.
\newblock Spectral properties for the laplacian of a generalized wigner matrix.
\newblock {\em Random Matrices: Theory and Applications}, 11(03):2250026, 2022.

\bibitem{chatterjee2024fluctuation}
Anirban Chatterjee and Jiaoyang Huang.
\newblock Fluctuation of the largest eigenvalue of a kernel matrix with application in graphon-based random graphs.
\newblock {\em arXiv preprint arXiv:2401.01866}, 2024.

\bibitem{chatterjee2009fluctuations}
Sourav Chatterjee.
\newblock Fluctuations of eigenvalues and second order poincar{\'e} inequalities.
\newblock {\em Probability Theory and Related Fields}, 143(1):1--40, 2009.

\bibitem{cheliotis2024limit}
Dimitris Cheliotis and Michail Louvaris.
\newblock The limit of the operator norm for random matrices with a variance profile.
\newblock {\em arXiv preprint arXiv:2404.13795}, 2024.

\bibitem{dabo2024traffic}
Issa Dabo and Camille Male.
\newblock A traffic approach for profiled pennington-worah matrices.
\newblock {\em arXiv preprint arXiv:2409.13433}, 2024.

\bibitem{diaconu2022two}
Simona Diaconu.
\newblock Two clts for sparse random matrices.
\newblock {\em arXiv preprint arXiv:2210.09625}, 2022.

\bibitem{dumitriu2013functional}
Ioana Dumitriu, Tobias Johnson, Soumik Pal, and Elliot Paquette.
\newblock Functional limit theorems for random regular graphs.
\newblock {\em Probability Theory and Related Fields}, 156(3-4):921--975, 2013.

\bibitem{dumitriu2012global}
Ioana Dumitriu and Elliot Paquette.
\newblock Global fluctuations for linear statistics of $\beta$-jacobi ensembles.
\newblock {\em Random Matrices: Theory and Applications}, 1(04):1250013, 2012.

\bibitem{dumitriu2019sparse}
Ioana Dumitriu and Yizhe Zhu.
\newblock Sparse general wigner-type matrices: Local law and eigenvector delocalization.
\newblock {\em Journal of Mathematical Physics}, 60(2), 2019.

\bibitem{dumitriu2023global}
Ioana Dumitriu and Yizhe Zhu.
\newblock Global eigenvalue fluctuations of random biregular bipartite graphs.
\newblock {\em Random Matrices: Theory and Applications}, 12(03):2350004, 2023.

\bibitem{dumitriu2024extreme}
Ioana Dumitriu and Yizhe Zhu.
\newblock Extreme singular values of inhomogeneous sparse random rectangular matrices.
\newblock {\em Bernoulli}, 30(4):2904--2931, 2024.

\bibitem{enriquez2016spectra}
Nathana{\"e}l Enriquez and Laurent M{\'e}nard.
\newblock Spectra of large diluted but bushy random graphs.
\newblock {\em Random Structures \& Algorithms}, 49(1):160--184, 2016.

\bibitem{erdHos2012spectral}
L{\'a}szl{\'o} Erd{\H{o}}s, Antti Knowles, Horng-Tzer Yau, and Jun Yin.
\newblock Spectral statistics of erd{\H{o}}s-r{\'e}nyi graphs ii: Eigenvalue spacing and the extreme eigenvalues.
\newblock {\em Communications in Mathematical Physics}, 314(3):587--640, 2012.

\bibitem{erdHos2013spectral}
L{\'a}szl{\'o} Erd{\H{o}}s, Antti Knowles, Horng-Tzer Yau, and Jun Yin.
\newblock Spectral statistics of erd{\H{o}}s--r{\'e}nyi graphs i: Local semicircle law.
\newblock {\em The Annals of Probability}, pages 2279--2375, 2013.

\bibitem{gao2021minimax}
Chao Gao and Zongming Ma.
\newblock Minimax rates in network analysis: Graphon estimation, community detection and hypothesis testing.
\newblock {\em Statistical science}, 36(1):16--33, 2021.

\bibitem{guionnet2021bernoulli}
Alice Guionnet.
\newblock Bernoulli random matrices.
\newblock {\em arXiv preprint arXiv:2112.05506}, 2021.

\bibitem{hiesmayr2023spectral}
Ella Hiesmayr and Theo McKenzie.
\newblock The spectral edge of constant degree {E}rd{\H{o}}s--{R}{\'e}nyi graphs.
\newblock {\em arXiv preprint arXiv:2309.11007}, 2023.

\bibitem{janson2010graphons}
Svante Janson.
\newblock Graphons, cut norm and distance, couplings and rearrangements.
\newblock {\em New York Journal of Mathematics}, 4, 2013.

\bibitem{jonsson1982some}
Dag Jonsson.
\newblock Some limit theorems for the eigenvalues of a sample covariance matrix.
\newblock {\em Journal of Multivariate Analysis}, 12(1):1--38, 1982.

\bibitem{jung2018delocalization}
Paul Jung and Jaehun Lee.
\newblock Delocalization and limiting spectral distribution of {E}rd{\H{o}}s-{R}{\'e}nyi graphs with constant expected degree.
\newblock {\em Electronic Communications in Probability}, 23, 2018.

\bibitem{kanazawa2023central}
Shu Kanazawa and Khanh~Duy Trinh.
\newblock Central limit theorem for linear eigenvalue statistics of the adjacency matrices of random simplicial complexes.
\newblock {\em Random Structures \& Algorithms}, 2024.

\bibitem{keriven2020convergence}
Nicolas Keriven, Alberto Bietti, and Samuel Vaiter.
\newblock Convergence and stability of graph convolutional networks on large random graphs.
\newblock {\em Advances in Neural Information Processing Systems}, 33:21512--21523, 2020.

\bibitem{khorunzhy1996asymptotic}
Alexei~M Khorunzhy, Boris~A Khoruzhenko, and Leonid~A Pastur.
\newblock Asymptotic properties of large random matrices with independent entries.
\newblock {\em Journal of Mathematical Physics}, 37(10):5033--5060, 1996.

\bibitem{kruger2023mesoscopic}
Torben Kr{\"u}ger and Yuriy Nemish.
\newblock Mesoscopic spectral clt for block correlated random matrices.
\newblock {\em arXiv preprint arXiv:2303.17020}, 2023.

\bibitem{landon2024single}
Benjamin Landon, Patrick Lopatto, and Philippe Sosoe.
\newblock Single eigenvalue fluctuations of general wigner-type matrices.
\newblock {\em Probability Theory and Related Fields}, 188(1):1--62, 2024.

\bibitem{le2018concentration}
Can~M Le, Elizaveta Levina, and Roman Vershynin.
\newblock Concentration of random graphs and application to community detection.
\newblock In {\em Proceedings of the International Congress of Mathematicians: Rio de Janeiro 2018}, pages 2925--2943. World Scientific, 2018.

\bibitem{lovasz2012large}
L{\'a}szl{\'o} Lov{\'a}sz.
\newblock {\em Large networks and graph limits}, volume~60.
\newblock American Mathematical Soc., 2012.

\bibitem{lunde2023subsampling}
Robert Lunde and Purnamrita Sarkar.
\newblock Subsampling sparse graphons under minimal assumptions.
\newblock {\em Biometrika}, 110(1):15--32, 2023.

\bibitem{nica2002operator}
Alexandru Nica, Dimitri Shlyakhtenko, and Roland Speicher.
\newblock Operator-valued distributions. i. characterizations of freeness.
\newblock {\em International Mathematics Research Notices}, 2002(29):1509--1538, 2002.

\bibitem{ruiz2021graphon}
Luana Ruiz, Luiz~FO Chamon, and Alejandro Ribeiro.
\newblock Graphon signal processing.
\newblock {\em IEEE Transactions on Signal Processing}, 69:4961--4976, 2021.

\bibitem{ruiz2023transferability}
Luana Ruiz, Luiz~FO Chamon, and Alejandro Ribeiro.
\newblock Transferability properties of graph neural networks.
\newblock {\em IEEE Transactions on Signal Processing}, 2023.

\bibitem{sanders2023singular}
Jaron Sanders and Alexander Van~Werde.
\newblock Singular value distribution of dense random matrices with block markovian dependence.
\newblock {\em Stochastic Processes and their Applications}, 158:453--504, 2023.

\bibitem{shcherbina2010central}
Mariya Shcherbina and Brunello Tirozzi.
\newblock Central limit theorem for fluctuations of linear eigenvalue statistics of large random graphs.
\newblock {\em Journal of mathematical physics}, 51(2), 2010.

\bibitem{shcherbina2012central}
Mariya Shcherbina and Brunello Tirozzi.
\newblock Central limit theorem for fluctuations of linear eigenvalue statistics of large random graphs: Diluted regime.
\newblock {\em Journal of mathematical physics}, 53(4), 2012.

\bibitem{sinai1998central}
Ya~Sinai and Alexander Soshnikov.
\newblock Central limit theorem for traces of large random symmetric matrices with independent matrix elements.
\newblock {\em Boletim da Sociedade Brasileira de Matem{\'a}tica-Bulletin/Brazilian Mathematical Society}, 29(1):1--24, 1998.

\bibitem{stephan2022non}
Ludovic Stephan and Laurent Massouli{\'e}.
\newblock Non-backtracking spectra of weighted inhomogeneous random graphs.
\newblock {\em Mathematical Statistics and Learning}, 5(3):201--271, 2022.

\bibitem{stephan2024non}
Ludovic Stephan and Yizhe Zhu.
\newblock A non-backtracking method for long matrix and tensor completion.
\newblock In {\em The Thirty Seventh Annual Conference on Learning Theory}, pages 4636--4690. PMLR, 2024.

\bibitem{stephan2024sparse}
Ludovic Stephan and Yizhe Zhu.
\newblock Sparse random hypergraphs: Non-backtracking spectra and community detection.
\newblock {\em Information and Inference: A Journal of the IMA}, 13(1):iaae004, 2024.

\bibitem{tran2013sparse}
Linh~V Tran, Van~H Vu, and Ke~Wang.
\newblock Sparse random graphs: Eigenvalues and eigenvectors.
\newblock {\em Random Structures \& Algorithms}, 42(1):110--134, 2013.

\bibitem{wang2023central}
Zhenggang Wang and Jianfeng Yao.
\newblock Central limit theorem for linear spectral statistics of block-wigner-type matrices.
\newblock {\em Random Matrices: Theory and Applications}, 12(04):2350006, 2023.

\bibitem{wick1950evaluation}
Gian-Carlo Wick.
\newblock The evaluation of the collision matrix.
\newblock {\em Physical review}, 80(2):268, 1950.

\bibitem{zhu2020graphon}
Yizhe Zhu.
\newblock A graphon approach to limiting spectral distributions of wigner-type matrices.
\newblock {\em Random Structures \& Algorithms}, 56(1):251--279, 2020.

\end{thebibliography}

\appendices

\section{Proof of auxiliary  lemmas}\label{sec:appendix}

\subsection{Proof of Lemma~\ref{lem: graphon_multi}}

The proof is similar to  \cite[Theorem 3.7]{borgs2008convergent} and we prove a stronger statement. 
\begin{lemma}
    Let $F=(V,E)$ is be a finite loopless multigraph, then  for any $W, W'\in \mathcal W_0$,
    \begin{align}
        |t(F,W)-t(F,W')|\leq |E| \delta_1(W,W').
    \end{align}
\end{lemma}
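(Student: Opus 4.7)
The plan is to run the standard edge-by-edge telescoping argument that proves continuity of homomorphism densities for simple graphs under the cut norm (as in \cite{borgs2008convergent}), but using the $L^1$ norm in place of the cut norm. Multi-edges are the reason cut norm alone fails: the cut-norm bound on a single edge replacement relies on that edge being somewhat decoupled from the rest, which breaks when another edge connects the same pair of vertices. The $L^1$ norm sidesteps this issue cleanly because it controls the integrand pointwise.

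First I would reduce to proving the stronger-looking estimate
\begin{align}
|t(F,W)-t(F,W')| \leq |E|\cdot \|W-W'\|_1
\end{align}
without any infimum over rearrangements. This reduction is valid because $t(F,W) = t(F,W^\sigma)$ for every measure-preserving bijection $\sigma:[0,1]\to[0,1]$, so the above estimate, applied to the pair $(W^\sigma, W')$ and then minimized over $\sigma$, yields the desired bound in terms of $\delta_1(W,W')$.

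Next, enumerate the edges of $F$ as $e_1,\ldots,e_m$ with $m = |E|$ and interpolate one edge at a time: for $0 \leq k \leq m$, let
\begin{align}
t_k := \int_{[0,1]^{|V|}} \prod_{i \leq k} W'(x_{e_i}) \prod_{i > k} W(x_{e_i}) \prod_{v \in V} dx_v,
\end{align}
so $t_0 = t(F,W)$ and $t_m = t(F,W')$. Telescoping reduces everything to showing $|t_k - t_{k-1}| \leq \|W-W'\|_1$ for each $k$. For this single difference, all factors other than the one on $e_k$ take values in $[0,1]$ (since $W, W' \in \mathcal{W}_0$), so bounding the product of those factors by $1$ leaves the integral of $|W(x_{e_k}) - W'(x_{e_k})|$ against uniform measure on $[0,1]^{|V|}$. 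Writing $e_k = \{u,v\}$ with $u \neq v$ and using Fubini, the $|V|-2$ coordinates not incident to $e_k$ integrate out to $1$, leaving exactly $\|W-W'\|_1$. Summing over $k$ completes the bound.

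There is essentially no technical obstacle here; the only subtle point is where the loopless assumption enters. It is used precisely at the Fubini step: if $e_k$ were a loop, the inner integral would collapse to a one-dimensional diagonal integral rather than $\|W-W'\|_1$, and the bound would degrade. Multi-edges, by contrast, cause no difficulty because the telescoping treats each edge independently, regardless of how many edges share the same pair of endpoints.
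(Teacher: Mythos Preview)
Your proposal is correct and matches the paper's own proof essentially line for line: the paper writes the same edge-by-edge telescoping sum, bounds each summand by $\|W-W'\|_1$ using that all remaining factors lie in $[0,1]$, and then passes to $\delta_1$ via invariance of $t(F,\cdot)$ under measure-preserving bijections. The only cosmetic difference is that the paper takes the infimum over $\sigma$ at the end rather than the beginning.
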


\begin{proof}
Let $V=[n]$ and $E=\{e_1,\dots, e_m\}$. Let $e_t=\{i_t,j_t\}$ for $t\in [m]$. Then 
\begin{align}
    &t(F,W)-t(F,W')=\int_{[0,1]^n} \prod_{\{i_t,j_t\}\in E}W(x_{i_t},x_{j_t})-\prod_{\{i_t,j_t\}\in E}W'(x_{i_t},x_{j_t}) dx_1\cdots dx_n\\
    &=\sum_{t=1}^m \int_{[0,1]^n}\prod_{s<t}W(x_{i_s}, x_{j_s}) \prod_{s>t} W'(x_{i_s}, x_{j_s}) (W(x_{i_t},x_{j_t})-W'(x_{i_t},x_{j_t})) dx_1\cdots dx_n.
\end{align}
Therefore, by triangle inequality, 
$|t(F,W)-t(F,W')| \leq m \|W-W'\|_1$.
Since $t(F,W)=t(F,W^{\phi})$ for any measure-preserving   bijection $\phi$ between $[0,1]$ and $[0,1]$, we have 
\begin{align}
  |t(F,W)-t(F,W')| \leq m\inf_{\phi}\|W^{\phi}-W'\|_1=|E| \delta_1(W,W').  
\end{align}
This finishes the proof.
\end{proof}

\subsection{Proof of Lemma~\ref{lem: W'}}
\begin{proof}
    Let $\sigma$ be any measure-preserving bijection $[0,1]\to [0,1]$. Then
    \begin{align}
    &  \int_{[0,1]^2} |W_n'^{\sigma}(x,y)-W'(x,y)|dxdy\\
      = & \int_{[0,1]^2}|W^{\sigma}(x,y)(1-pW^{\sigma}(x,y))-W(x,y)(1-pW(x,y))| dxdy\\
        \leq & \int_{[0,1]^2} |(W_n^{\sigma}(x,y)-W(x,y))(1-pW_n^{\sigma})|dxdy +p\int_{[0,1]^2} |W(x,y)(W(x,y)-W_n^{\sigma}(x,y)|dxdy\\
        \leq & 2 \int_{[0,1]^2} |(W_n^{\sigma}(x,y)-W(x,y))|dxdy.
    \end{align}
    Taking infimum over all $\sigma$, we have 
    $\delta_1(W_n', W') \leq 2\delta_1(W_n, W)$. Therefore the conclusion holds.
\end{proof}

\section{The size of $\mathcal T_{1}^{k,h}, \mathcal T_{2}^{k,h}$, and $\mathcal{TC}^{k,h}$}\label{sec:size}

In this section, let $C_k=\frac{1}{k+1}\binom{2k}{k}$ be the $k$-th Catalan number, which counts the number of all rooted planar trees with $k$ edges.
We first compute the size of $\mathcal T_{1}^{k,h}, \mathcal T_{2}^{k,h}$ introduced in Definition~\ref{def: graph_tree}.
\begin{prop}
For any even integers $k,h\geq 2$, we have  \begin{align}
    \operatorname{card}(\mathcal 
 T^{k,h}_{1}) = \operatorname{card}(\mathcal  T^{k,h}_{2} ) = \frac{kh}{2} C_{k/2}  C_{h/2} .
\end{align}
\end{prop}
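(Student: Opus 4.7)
The strategy is a direct enumeration based on the classical fact that the number of rooted planar trees with $m$ edges equals the Catalan number $C_m$. An element of $\mathcal{T}_1^{k,h}$ is, by Definition~\ref{def: graph_tree}, determined by a constructive recipe: take a rooted planar tree $T_1$ with $k/2$ edges, a rooted planar tree $T_2$ with $h/2$ edges, identify one edge of $T_1$ with one edge of $T_2$, and remember that the result carries the two distinguished roots coming from $T_1$ and $T_2$.

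The enumeration therefore proceeds through the following steps. First, I would choose the rooted planar tree $T_1$, contributing a factor $C_{k/2}$, and independently the rooted planar tree $T_2$, contributing a factor $C_{h/2}$. Second, I would select the edge in $T_1$ to be overlapped ($k/2$ choices) and the edge in $T_2$ it is glued to ($h/2$ choices). Third, since each edge has two endpoints, there are $2$ ways to identify the endpoints of the two chosen edges. Multiplying these independent contributions yields
\begin{equation}
C_{k/2} \cdot C_{h/2} \cdot \frac{k}{2} \cdot \frac{h}{2} \cdot 2 \;=\; \frac{kh}{2}\, C_{k/2} C_{h/2},
\end{equation}
which is the claimed cardinality for $\mathcal{T}_1^{k,h}$. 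For $\mathcal{T}_2^{k,h}$, the construction is identical except that instead of identifying the chosen edges, one identifies only their two endpoints and keeps the two chosen edges as parallel edges in the resulting loopless multigraph. Every step of the parametrization is the same, so the count is the same.

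The main point requiring care is the injectivity of this parametrization: I need to verify that two distinct tuples $(T_1, T_2, e_1, e_2, \text{orientation})$ yield two distinct elements of $\mathcal{T}_1^{k,h}$ (respectively $\mathcal{T}_2^{k,h}$). This is plausible because each element of $\mathcal{T}_1^{k,h}$ retains the two roots as distinguished vertices, and starting from the roots one can reconstruct $T_1$ and $T_2$ unambiguously by deleting the shared edge (resp.\ one of the two parallel edges, chosen by which root it is reached from); the chosen edge $e_i$ is then the unique edge of $T_i$ that was removed, and the orientation is recovered from the identification of its endpoints with the shared vertices. I would formalize this reconstruction map as a two-sided inverse to the enumeration above to rule out any overcounting.

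The main obstacle, and the only nontrivial part, is formulating precisely the notion of ``planar tree'' in $\mathcal{T}_i^{k,h}$ (in particular, whether the planar embedding is a memory of the embedding inherited from $T_1$ and $T_2$, or an intrinsic datum of the resulting graph) so that the inverse reconstruction described above is well-defined. Once this convention is fixed to match the one used implicitly in the proofs of Theorems~\ref{thm: centered_CLT_inhomo_dense} and~\ref{thm: centered_CLT_inhomo_sparse1} — where each element of $\mathcal{T}_i^{k,h}$ is counted exactly once as the union graph of a pair $(I_k, J_h)$ of closed walks sharing the prescribed overlap pattern — the bijection, and hence the cardinality formula, is immediate.
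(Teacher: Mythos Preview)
Your proposal is correct and follows essentially the same approach as the paper: choose the two rooted planar trees ($C_{k/2}C_{h/2}$ choices), choose the overlapping edge in each ($\frac{k}{2}\cdot\frac{h}{2}$ choices), and choose one of the two orientations for the identification. The paper's proof is in fact less detailed than yours, as it does not discuss the injectivity of the parametrization at all.
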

\begin{proof}
 To construct a graph in $\mathcal 
 T^{k,h}_{1}$, we first choose two rooted planar trees with $k/2,h/2$ edges, respectively. There are $C_{k/2}\cdot C_{h/2}$ many choices. Next, there are total $ k/2 \cdot h/2$ ways to select one overlapping edge, with two directions for them to overlap.  The same argument works for $\mathcal 
 T^{k,h}_{2}$.
\end{proof}

To compute the size of $\mathcal{TC}^{k,h}$ in Definition~\ref{def: TC}, we need the following definition.
    \begin{definition}\label{def: s(k,h)}
 For positive \( r \) and non-negative \( s \), denote by \( \mathcal{P}_r(s) \) the set of all partitions \( \kappa \) into \( s \) with \( r \) parts: \( \kappa = (\kappa_1, \ldots, \kappa_r) \) with \( \sum_{i=1}^r \kappa_{i} = s \) and each \( \kappa_{i} \geq 0 \). And define
    \[
    P_r(s) = \sum_{\kappa \in \mathcal{P}_r(s)} \prod_{i=1}^r C_{\kappa_{i}}.
    \]
 For any positive integer \( m \), define the set \( \mathcal{I}(m) = \{ r \equiv m \pmod{2} \mid 3 \leq r \leq m \} \). When  \( k + h \) is even, set
    \[
    S(k, h) = \sum_{r \in \mathcal{I} \left( \frac{k + h}{2} \right)} \frac{2 k h}{r} P_r \left( \frac{k - r}{2} \right) P_r \left( \frac{h - r}{2} \right).
    \]
\end{definition}

\begin{prop}
Assume $k+h\geq 2$ is even. Then 
    $\operatorname{card}({\mathcal{TC}}^{k,h}) =S(k,h)$. 
\end{prop}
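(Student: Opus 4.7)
The plan is to stratify $\mathcal{TC}^{k,h}$ by the length $r$ of the shared cycle and count each stratum via orbit-stabilizer. Each of the two closed walks (of lengths $k$ and $h$) traces the shared cycle's $r$ edges exactly once and every other edge twice, forcing $3 \leq r \leq \min(k,h)$ and $r \equiv k \equiv h \pmod 2$; these are the $r$ captured by $\mathcal{I}((k+h)/2)$ once one adopts the convention $P_r(s) = 0$ for non-integer $s$, so that any extraneous indices contribute zero to $S(k,h)$.

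For a fixed admissible $r$, an underlying abstract graph $\Gamma$ consists of a cycle of length $r$ with rooted planar trees attached at each of its $r$ vertices, split into a side-$1$ sequence $T^{(1)} = (T^{(1)}_1, \dots, T^{(1)}_r)$ with $\sum_i |E(T^{(1)}_i)| = (k-r)/2$ and a side-$2$ sequence $T^{(2)}$ with total $(h-r)/2$ edges. Choosing a base cycle vertex together with a cycle orientation linearizes these cyclic sequences, so that the set $X$ of ordered pairs $(T^{(1)}, T^{(2)})$ satisfies $|X| = P_r((k-r)/2) \cdot P_r((h-r)/2)$. The dihedral group $D_r$ of order $2r$ acts on $X$ by simultaneously rotating and reflecting both sequences; isomorphism classes $[\Gamma]$ correspond bijectively to $D_r$-orbits, with $\mathrm{Aut}(\Gamma)$ equal to the $D_r$-stabilizer of any representative.

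On a labeled representative of $[\Gamma]$, a pair of rooted closed walks — one of length $k$ on the (cycle $+$ side-$1$ trees) subgraph, one of length $h$ on the (cycle $+$ side-$2$ trees) subgraph — is specified, for each walk independently, by a cycle orientation (2 choices) and a starting step in the cyclic step sequence ($k$, resp.\ $h$, choices), since the Euler tour on each rooted planar tree is unique. This yields $(2k)(2h) = 4kh$ rooted walk pairs per labeled $\Gamma$. As $\mathrm{Aut}(\Gamma)$ acts freely on these pairs (any nontrivial dihedral symmetry moves at least one walk's starting step), the $r$-stratum of $\mathcal{TC}^{k,h}$ has cardinality
\[
\sum_{[\Gamma]} \frac{4kh}{|\mathrm{Aut}(\Gamma)|} \;=\; 4kh \cdot \frac{|X|}{|D_r|} \;=\; \frac{2kh}{r}\, P_r\!\left(\tfrac{k-r}{2}\right) P_r\!\left(\tfrac{h-r}{2}\right),
\]
where the middle equality is the standard orbit-stabilizer identity $\sum_{[\Gamma]} |\mathrm{Aut}(\Gamma)|^{-1} = |X|/|D_r|$. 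Summing over $r \in \mathcal{I}((k+h)/2)$ then gives $|\mathcal{TC}^{k,h}| = S(k,h)$.

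The main technical obstacle is establishing the freeness of the $\mathrm{Aut}(\Gamma)$-action on rooted walk pairs together with the identification of $\mathrm{Aut}(\Gamma)$ with the $D_r$-stabilizer of $(T^{(1)}, T^{(2)})$. The identification relies on the fact that a unicyclic graph has a unique cycle, which every automorphism preserves setwise, so $\mathrm{Aut}(\Gamma)$ restricts to a subgroup of $D_r$ that must also preserve the rooted planar tree structure at each cycle vertex; the freeness follows because a rooted walk's starting step together with the rooted planar structure rigidifies the dihedral action to the identity.
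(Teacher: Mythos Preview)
Your proof is correct and follows the same approach as the paper: stratify by the cycle length $r$, count the attached tree-sequences via $P_r((k-r)/2)\,P_r((h-r)/2)$, multiply by the starting-step and orientation choices for the two walks, and quotient by the symmetries of the shared cycle. Your orbit-stabilizer packaging with the full dihedral group $D_r$ (yielding $4kh/|D_r|=2kh/r$) is a cleaner formalization of the paper's direct overcount-and-divide argument (which cites $k\cdot h$ starting points, two orientation choices, and division by $r$), but the substance is identical.
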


\begin{proof}
Consider graphs in $\mathcal{TC}^{k,h}$ with a cycle of length $r$. Then $r$ must be chosen from the set
\begin{align}
    \mathcal{I} \left( \frac{k + h}{2} \right) = \left\{ r \equiv \frac{k + h}{2} \pmod{2} \mid 3 \leq r \leq \frac{k + h}{2} \right\},
\end{align}
since each edge not in the cycle must be traversed at least twice. Let $G_{I_k}, G_{J_h}$ be two graphs spanned by a closed walk of length $k$ and $h$, respectively, with a shared cycle of length $r$.

For $G_{I_k}$,
there are \( (k - r) / 2 \)  edges not in the cycle of length $r$. Define the number of edges as \( \kappa = (\kappa_1, \ldots, \kappa_r) \) with \( \sum_{i=1}^r \kappa_{i} = (k - r) / 2 \) and \( \kappa_{i} \geq 0 \) for each \( i \). Here, \( \kappa_{i} \) represents the number of edges in each tree attached to the \( i \)-th vertex of the cycle, and there are a total of \( r \) trees, one for each vertex of the cycle.   Thus, the total number of ways to form these trees is given by
\begin{align}
     \sum_{\kappa \in \mathcal{P}_r\left( \frac{k - r}{2} \right)} \prod_{i=1}^r C_{\kappa_{i}} = {P}_r\left( \frac{k - r}{2} \right),
\end{align}
where we use Definition~\ref{def: s(k,h)}.
Then, the total number of ways to form the trees in $G_{I_k}$ is given by $ {P}_r\left( \frac{h - r}{2} \right) $.

To construct the union graph \( G_{I_k \cup J_h} \) from \( G_{I_k} \) and \( G_{J_h} \), we first choose the starting point in both \( G_{I_k} \) and \( G_{J_h} \) for the cycle, which provides \( k \cdot h \) possible pairs of starting vertices. Additionally, we have two choices for the direction of the cycle in each graph (clockwise or counterclockwise), resulting in \( 2 \) choices per graph.
To account for rotational symmetry, each pair of walks is equivalent under any simultaneous cyclic permutation, leading to an overcount by a factor of \( r \). Therefore, the total number of ways to construct \( G_{I_k \cup J_h} \) with a given cycle length \( r \) is

\[
\frac{2 k h}{r} P_r\left( \frac{k - r}{2} \right) P_r\left( \frac{h - r}{2} \right).
\]
Summing over all possible values of \( r \) in \( \mathcal{I} \left( \frac{k + h}{2} \right) \), we obtain
\begin{align}
    \operatorname{card}({\mathcal{TC}}^{k,h}) = \sum_{r \in \mathcal{I} \left( \frac{k + h}{2} \right)} \frac{2 k h}{r} P_r \left( \frac{k - r}{2} \right) P_r \left( \frac{h - r}{2} \right)
    = S(k,h).
\end{align}
\end{proof}
\end{document}